\newcommand{\tun}{\begin{picture}(5,0)(-2,-1)
\put(0,0){\circle*{2}}
\end{picture}}
\theoremstyle{plain}
\newtheorem{theo}{Theorem}
\newtheorem{lemma}[theo]{Lemma}
\newtheorem{cor}[theo]{Corollary}
\newtheorem{prop}[theo]{Proposition}
\newtheorem{defi}[theo]{Definition}
\theoremstyle{remark}
\newtheorem{remark}{Remark}
\newtheorem{notation}{Notations}
\newtheorem{example}{Example}
\newcommand{\N}{\mathbb{N}}
\newcommand{\K}{\mathbb{K}}
\newcommand{\h}{\mathcal{H}}
\newcommand{\QSym}{\mathbf{QSym}}
\newcommand{\rond}[1]{*++[o][F-]{#1}}
\newcommand{\fg}{\mathcal{FG}}
\newcommand{\ncfg}{\mathcal{NCFG}}
\newcommand{\sg}{\mathcal{SG}}
\newcommand{\ncsg}{\mathcal{NCSG}}
\newcommand{\qp}{\mathcal{QP}}
\newcommand{\p}{\mathcal{P}}
\newcommand{\bfM}{\mathbf{M}}
\newcommand{\bfH}{\mathbf{H}}
\newcommand{\bfA}{\mathbf{A}}
\begin{document}

\title{Realizations of Hopf algebras of graphs by alphabets}
\date{}
\author{Lo\"\i c Foissy\\ \\
{\small \it Fédération de Recherche Mathématique du Nord Pas de Calais FR 2956}\\
{\small \it Laboratoire de Mathématiques Pures et Appliquées Joseph Liouville}\\
{\small \it Université du Littoral Côte d'Opale-Centre Universitaire de la Mi-Voix}\\ 
{\small \it 50, rue Ferdinand Buisson, CS 80699,  62228 Calais Cedex, France}\\ \\
{\small \it Email: foissy@univ-littoral.fr}}

\maketitle

\begin{abstract}
We here give polynomial realizations of various Hopf algebras or bialgebras on Feynman graphs, graphs, 
posets or quasi-posets, that it to say injections of these objects into polynomial algebras generated by an
alphabet. The alphabet here considered are totally quasi-ordered. The coproducts are given by doubling the alphabets; 
a second coproduct is defined by squaring the alphabets, and we obtain
cointeracting bialgebras in the commutative case.
\end{abstract}

\textbf{Keywords.} Combinatorial Hopf algebras; Feynman graphs; posets\\

\textbf{AMS classification.} 16T05, 05C25, 06A11

\tableofcontents

\section*{Introduction}

Some combinatorial Hopf algebras admit a polynomial realization, which gives an efficient way 
to prove the existence of the coproduct
and more structures, see \cite{DHNT,NT,FNT,DLNTT,Maurice}. 
Let us explicit a well-known example. The algebra $\QSym$ of quasi-symmetric functions
has a basis $(M_{a_1,\ldots,a_k})$ indexed by compositions, that is to say finite sequences of positive integers.
\begin{enumerate}
\item For any totally ordered alphabet $X$, let us consider the following elements of the ring of formal series $\K[[X]]$ generated by $X$:
\[M_{(a_1,\ldots,a_k)}(X)=\sum_{x_1<\ldots<x_k\:\mbox{\scriptsize in }X}x_1^{a_1}\ldots x_n^{a_n}.\]
This defines a map from $\QSym$ to $\K[[X]]$, injective if, and only if, $X$ is infinite. 
This map is an algebra morphism. For example, if $a,b\geq 1$, for any totally ordered alphabet $X$:
\begin{align*}
M_{(a)}(X)M_{(b)}(X)&=\sum_{x,y\in X} x^a y^b\\
&=\sum_{x<y}x^ay^b+\sum_{y<x}x^ay^b+\sum_x x^{a+b}\\
&=M_{(a,b)}(X)+M_{(b,a)}(X)+M_{(a+b)}(X),
\end{align*}
and, in $\QSym$:
\[M_{(a)}M_{(b)}=M_{(a,b)}+M_{(b,a)}+M_{(a+b)}.\]
\item If $X$ and $Y$ are totally ordered alphabets, then $X\sqcup Y$ is too, 
the elements of $X$ being smaller than the elements of $Y$.
Identifying $\K[[X\sqcup Y]]$ with a subalgebra of $\K[[X]]\otimes \K[[Y]]$, we define a coproduct on $\QSym$ by:
\[\Delta(M_{(a_1,\ldots,a_k)})(X,Y)=M_{(a_1,\ldots,a_k)}(X\sqcup Y).\]
For example:
\begin{align*}
\Delta(M_{(a,b)})(X,Y)&=M_{(a,b)}(X\sqcup Y)\\
&=\sum_{x<y \:\mbox{\scriptsize in} X}x^ay^b+\sum_{x<y\: \mbox{\scriptsize in} Y}x^ay^b+\sum_{(x,y)\in X\times Y}x^ay^b\\
&=M_{(a,b)}(X)+M_{(a,b)}(Y)+M_{(a)}(X)M_{(b)}(Y),
\end{align*}
and, in $\QSym$:
\[\Delta(M_{(a,b)})=M_{(a,b)}\otimes 1+1\otimes M_{(a,b)}+M_{(a)}\otimes M_{(b)}.\]
The coassociativity of $\Delta$ is easily obtained from the equality $(X\sqcup Y)\sqcup Z=X\sqcup (Y\sqcup Z)$.
\item If $X$ and $Y$ are totally ordered alphabets, then $XY=X\times Y$ is too, with the lexicographic order.
We consider $\K[[XY]]$ as a subalgebra of $\K[[X]]\otimes \K[[Y]]$, identifying $(x,y)$ with $x\otimes y$.
We can define a second coproduct on $\QSym$ by:
\[\delta(M_{(a_1,\ldots,a_k)})(X,Y)=M_{(a_1,\ldots,a_k)}(XY).\]
For example:
\begin{align*}
\delta(M_{(a,b)})(X,Y)&=M_{(a,b)}(XY)\\
&=\sum_{\substack{x<x' \:\mbox{\scriptsize in} X, \\ y,y'\in Y}}x^ay^ax'^by'^b
+\sum_{\substack{x\in X,\:y<y' \:\mbox{\scriptsize in} X, }}x^ay^ax^by'^b\\
&=M_{(a,b)}(X)(M_{(a,b)}(Y)+M_{(b,a)}(Y)+M_{(a+b)}(Y))+M_{(a+b)}(X)M_{(a,b)}(Y),
\end{align*}
and, in $\QSym$:
\[\delta(M_{(a,b)})=M_{(a,b)}\otimes (M_{(a,b)}+M_{(b,a)}+M_{(a+b)})+M_{(a+b)}\otimes M_{(a,b)}.\]
The coassociativity of $\delta$ is easily obtained from the equality $(XY) Z=X(YZ)$.
Moreover:
\[(X\sqcup Y)Z=(XZ)\sqcup (YZ).\]
This implies that in $\QSym$:
\[(\Delta\otimes Id)\circ \delta=m_{1,3,24}\circ(\delta \otimes \delta)\circ \Delta,\]
where $m_{1,3,24}:\QSym^{\otimes 4}\longrightarrow \QSym^{\otimes 3}$ sends $x\otimes y\otimes z\otimes t$ to $x\otimes z\otimes yt$.
This means that the Hopf algebra $(\QSym,m,\Delta)$ is a Hopf algebra 
in the category of right comodules over the bialgebra $(\QSym,m,\delta)$, the coaction being $\delta$ itself: 
we call this a pair of bialgebras in cointeraction.
\end{enumerate}
For other examples of such objects and applications, see \cite{Manchon,BelhajManchon,Foissy2,Foissy3,Foissy4}.\\

We here give other examples of cointeracting bialgebras coming from the manipulation of alphabets and polynomial realizations.
We use here totally quasi-ordered alphabets, that is to say sets with a total transitive reflexive (but not necessarily antisymmetric) relation.
The associated algebras $A_q(X)$ are slightly more complicated, see Definition \ref{defi9}. Their different sets of generators
allows to polynomially realize Feynman graphs (one set for vertices, one set for internal edges, one set for incoming half-edges
and a last one for outgoing half-edges); this gives a family of products $\cdot_q$ on the space $H_{\fg}$ generated by isoclasses of Feynman graphs,
indexed by a scalar $q$ (Theorem \ref{theoprod}). 
If $F$ and $G$ are two Feynman graphs, $F\cdot_q G$ is a sum of graphs obtained by gluing together vertices of 
$F$ and $G$; in particular, if $q=0$, this is reduced to the disjoint union of $F$ and $G$. 
The trick of doubling the alphabet gives $\h_{\fg}$ a coproduct $\Delta$, given by ideals (Theorem \ref{theocop}),
and the trick of squaring the alphabet gives it a second coproduct $\delta$; 
we obtain in this way a pair of cointeracting bialgebras (Corollary \ref{corDelta}).
For example, for the following graph:
\[G=\mbox{\parbox{3mm}{\xymatrix{\\
\rond{\:}\ar@<1ex>[u] \ar@<-1ex>[u]\\
\rond{\:}\ar@/^/[u]\ar@/_/[u]\\
\ar[u]}}},\]
we obtain:
\begin{align*}
\Delta(G)&=G\otimes 1+1\otimes G+
\mbox{\parbox{3mm}{\xymatrix{\\
\rond{\:}\ar@<1ex>[u] \ar@<-1ex>[u]\\
\ar[u]}}}\otimes \mbox{\parbox{3mm}{\xymatrix{\\
\rond{\:}\ar@<1ex>[u] \ar@<-1ex>[u]\\
\ar@<1ex>[u] \ar@<-1ex>[u]}}},&
\delta(G)
&=G\otimes\mbox{\parbox{3mm}{\xymatrix{&\\
\rond{\:}\ar@<1ex>[u] \ar@<-1ex>[u]&\rond{\:}\ar@<1ex>[u] \ar@<-1ex>[u]\\
\ar[u]&\ar@<1ex>[u] \ar@<-1ex>[u]}}}
+\mbox{\parbox{3mm}{\xymatrix{\rond{\:}\\ \ar[u]} }}
\mbox{\parbox{3mm}{\xymatrix{\\ \rond{\:}\ar@<1ex>[u] \ar@<-1ex>[u]}}}
\otimes G.
\end{align*}
The coproduct $\delta$  is similar to the Connes-Kreimer's one $\Delta_{CK}$ \cite{CK1,CK2,CK3,CK4,CK5,CK6,Manchon}, but slightly different.
For example:
\[\Delta_{CK}(G)
=G\otimes\mbox{\parbox{3mm}{\xymatrix{&\\
\rond{\:}\ar@<1ex>[u] \ar@<-1ex>[u]&\rond{\:}\ar@<1ex>[u] \ar@<-1ex>[u]\\
\ar[u]&\ar@<1ex>[u] \ar@<-1ex>[u]}}}
+\mbox{\parbox{3mm}{\xymatrix{\\
\rond{\:}\ar@<1ex>[u] \ar@<-1ex>[u]\\
\ar[u]}}}\otimes G.\]
We shall then consider several quotients of $A_q(X)$, leading to quotient bialgebras of $\h_{\fg}$. 
We obtain in this way a polynomial realization of a Hopf algebra of simple oriented graphs $\h_{\sg}$, and then a polynomial realization
of the Hopf algebra on quasi-posets $\h_{\qp}$ of \cite{FM,FMP1,FMP2,Foissy2}.
Restricting to ordered alphabets, instead of quasi-ordered alphabets, we obtain  quotients bialgebras, namely
$H_\ncfg$ based on  Feynman graphs with no cycle in Theorem \ref{theofgwc} , $H_\ncsg$ on simple oriented graphs with no cycle
in Theorem \ref{theosgwc}, and $H_\p$ on posets in Theorem \ref{theoposets}, obtaining diagrams of Hopf algebras:
\[\xymatrix{(H_\fg,._q,\Delta)\ar@{->>}[r]^S\ar@{->>}[d]_T&(H_\sg,._q,\Delta)\ar@{->>}[r]^P\ar@{->>}[d]^T&(H_\qp,._q,\Delta)\ar@{->>}[d]^T\\
(H_\ncfg,._q,\Delta)\ar@{->>}[r]_S&(H_\ncsg,._q,\Delta)\ar@{->>}[r]_P&(H_\p,._q,\Delta)}\]
We also show that these Hopf algebra admit noncommutative versions, 
replacing the algebras $A_q(X)$ by a noncommutative analogue.
The last paragraph is devoted to the description of the dual Hopf algebra of posets, using the notion of system of edge between two posets.

\section{Operations on alphabets}

All the proofs of this section are elementary and left to the reader.

\subsection{Quasi-ordered alphabets}

\begin{defi}
A quasi-ordered alphabet is a pair $(X,\leq_X)$, where $X$ is an alphabet and $\leq_X$ is a total quasi-order on $X$, that is to say a relation on $X$ such that:
\begin{align*}
&\forall i\in X,&&i\leq_X i&\mbox{(reflexivity)},\\
&\forall i,j,k\in X,&&(i\leq_X j) \mbox{ and }(j\leq_X k) \Longrightarrow i\leq_X k&\mbox{(transitivity)},\\
&\forall i,j\in X,&&i\leq_X j\mbox{ or }j\leq_X i&\mbox{(totality)}.
\end{align*}
If $\leq_X$ is an order, we shall say that $(X,\leq_X)$ is an ordered alphabet.
\end{defi}

\begin{notation}
Let $(X,\leq_X)$ be a quasi-ordered alphabet. We define an equivalence $\sim_X$ on X by:
\begin{align*}
&\forall i,j\in X,&i\sim_X j\mbox{ if } (i\leq_X j) \mbox{ and }(j\leq_X i).
\end{align*}
For all $i,j\in X$, we shall denote $i<_X j$ if $i\leq_X j$ and not $j\leq_X i$.
\end{notation}

\subsection{Disjoint union}

\begin{prop}
Let $(X,\leq_X)$ and $(Y,\leq_Y)$ be two quasi-ordered alphabets. The set $X\sqcup Y$ is given a relation $\leq_{X\sqcup Y}$:
\begin{align*}
&\forall i,j\in X\sqcup Y,& i\leq_{X\sqcup Y} j\mbox{ if }&(i,j\in X \mbox{ and }i\leq_X j)\\
&&& \mbox{ or }(i,j\in Y \mbox{ and }i\leq_y j)\\
&&&\mbox{ or }(i\in X,\: j\in Y). 
\end{align*}
Then $(X\sqcup Y,\leq_{X\sqcup Y})$ is a quasi-ordered alphabet.
\end{prop}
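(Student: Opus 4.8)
The plan is to verify directly the three defining properties of a total quasi-order for the relation $\leq_{X\sqcup Y}$, handling each property by case analysis according to which of the two summands $X$ or $Y$ contains the elements in question. Since the relation on $X\sqcup Y$ is defined by a disjunction of three mutually exclusive clauses (both in $X$, both in $Y$, or the first in $X$ and the second in $Y$), every verification reduces to checking a small finite table of cases, using the corresponding property of $\leq_X$ or $\leq_Y$ where both elements lie in the same summand.

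For reflexivity, given $i\in X\sqcup Y$, either $i\in X$, and then $i\leq_X i$ by reflexivity of $\leq_X$ gives $i\leq_{X\sqcup Y}i$, or $i\in Y$, and symmetrically $i\leq_Y i$ gives the conclusion. For totality, given $i,j\in X\sqcup Y$: if both are in $X$, apply totality of $\leq_X$; if both are in $Y$, apply totality of $\leq_Y$; if $i\in X$ and $j\in Y$ then $i\leq_{X\sqcup Y}j$ by the third clause; if $i\in Y$ and $j\in X$ then $j\leq_{X\sqcup Y}i$ by the third clause. In every case at least one of $i\leq_{X\sqcup Y}j$, $j\leq_{X\sqcup Y}i$ holds.

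The only step requiring a bit of bookkeeping is transitivity: assume $i\leq_{X\sqcup Y}j$ and $j\leq_{X\sqcup Y}k$, and consider the location of $i$, $j$, $k$ among $\{X,Y\}$. The key observation is that because $i\leq_{X\sqcup Y}j$ forbids the configuration $i\in Y$, $j\in X$, and likewise for $j,k$, the admissible triples $(i,j,k)$ are exactly those in which, reading left to right, one never moves from $Y$ back to $X$: namely $(X,X,X)$, $(X,X,Y)$, $(X,Y,Y)$, $(Y,Y,Y)$. In the all-$X$ and all-$Y$ cases the conclusion follows from transitivity of $\leq_X$ or $\leq_Y$ respectively; in the mixed cases $(X,X,Y)$, $(X,Y,Y)$ the element $i$ lies in $X$ and $k$ lies in $Y$, so $i\leq_{X\sqcup Y}k$ holds immediately by the third clause, regardless of the intermediate comparison. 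This exhausts all cases and establishes transitivity, completing the proof that $(X\sqcup Y,\leq_{X\sqcup Y})$ is a quasi-ordered alphabet. As the excerpt notes, there is no genuine obstacle here: the argument is entirely routine, the only mild care being to enumerate the case distinctions for transitivity without omission.
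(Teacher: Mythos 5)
Your verification is correct: the case analysis for transitivity (only the configurations $(X,X,X)$, $(X,X,Y)$, $(X,Y,Y)$, $(Y,Y,Y)$ can occur, and the mixed ones are settled by the third clause) is complete, and reflexivity and totality are handled properly. The paper explicitly leaves all proofs of this section to the reader as elementary, and your argument is exactly the intended routine verification, so there is nothing further to add.
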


\begin{remark} 
If $X$ and $Y$ are ordered alphabets, then $X\sqcup Y$ is also ordered.
\end{remark}

\begin{lemma}\begin{enumerate}
\item Let $X,Y$ be quasi-ordered alphabets. 
\begin{align*}
&\forall i,j \in X\sqcup Y,&
i\sim_{X\sqcup Y} j\Longleftrightarrow &(i,j\in X\mbox{ and }i\sim_X j)\mbox{ or }(i,j\in Y\mbox{ and } i\sim_Y j).
\end{align*} 
\item Let $X$, $Y$ and $Z$ be quasi-ordered alphabets. Then:
\[(X\sqcup Y)\sqcup Z=X\sqcup(Y\sqcup Z).\]
\end{enumerate} \end{lemma}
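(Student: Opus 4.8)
The plan is to verify each of the two assertions directly from the definition of the quasi-order on a disjoint union, since both are purely set-theoretic statements about how $\leq$ and $\sim$ behave under $\sqcup$.

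For part (1), I would argue by a case analysis on where $i$ and $j$ live. By definition, $i\sim_{X\sqcup Y}j$ means $i\leq_{X\sqcup Y}j$ and $j\leq_{X\sqcup Y}i$. If $i\in X$ and $j\in Y$, then $i\leq_{X\sqcup Y}j$ holds (third clause) but $j\leq_{X\sqcup Y}i$ fails, since $j\in Y$, $i\in X$ matches none of the three clauses giving $j\leq_{X\sqcup Y}i$; so $i\not\sim_{X\sqcup Y}j$, and symmetrically for $i\in Y$, $j\in X$. Hence $i\sim_{X\sqcup Y}j$ forces $i,j$ to lie in the same component, and in that case $\leq_{X\sqcup Y}$ restricts to $\leq_X$ (resp. $\leq_Y$) by the first (resp. second) clause, so $i\sim_{X\sqcup Y}j$ is equivalent to $i\sim_X j$ (resp. $i\sim_Y j$). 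This gives both implications.

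For part (2), I would note that as sets $(X\sqcup Y)\sqcup Z$ and $X\sqcup(Y\sqcup Z)$ are canonically identified with $X\sqcup Y\sqcup Z$, so it suffices to check the two relations agree. Take $i,j$ in the triple union and check, by a case split on which of $X,Y,Z$ each belongs to, that $i\leq_{(X\sqcup Y)\sqcup Z}j$ holds if and only if $i\leq_{X\sqcup(Y\sqcup Z)}j$: within a single component both reduce to the internal order; when $i$ and $j$ lie in different components, one verifies that in both bracketings the relation holds precisely when the component of $i$ comes before the component of $j$ in the order $X,Y,Z$. In fact both sides equal the relation ``$i\leq j$ internally if $i,j$ are in the same block, and $i$ before $j$ otherwise,'' so they coincide.

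There is essentially no obstacle here: the statement is elementary and, as the paper itself says, ``left to the reader.'' The only thing requiring mild care is being systematic about the case analysis in part (1), making sure to use totality of $\leq_X$ and $\leq_Y$ is \emph{not} needed, only the defining clauses of $\leq_{X\sqcup Y}$; and in part (2), being explicit that the underlying-set identification is what makes the equality of alphabets (not just isomorphism) meaningful.
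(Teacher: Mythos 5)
Your case analysis is correct and complete: part (1) follows from the observation that elements in different components can never be comparable in both directions, and part (2) from the identification of the underlying sets together with a routine check of the clauses. The paper leaves this lemma to the reader as an elementary verification, and your argument is exactly the intended one.
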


\subsection{Product}

\begin{prop}
Let $(X,\leq_X)$ and $(Y,\leq_Y)$ be two quasi-ordered alphabets. The set $XY=X\times Y$ is given a relation $\leq_{XY}$ in the following way:
\begin{align*}
&\forall i,i'\in X,\: j,j'\in Y,& (i,j)\leq_{XY} (i',j')\mbox{ if }&(i\sim_X i' \mbox{ and }j\leq_Y j')\mbox{ or }(i<_X i').
\end{align*}
Then $(XY,\leq_{XY})$ is a quasi-ordered alphabet, which we denote by $XY$.
\end{prop}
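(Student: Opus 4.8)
The plan is to verify the three axioms of a total quasi-order for the relation $\leq_{XY}$ on $X\times Y$, taking them one at a time.

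First I would prove reflexivity: given $(i,j)\in XY$, we have $i\sim_X i$ (since $\leq_X$ is reflexive, $i\leq_X i$ in both directions) and $j\leq_Y j$ (reflexivity of $\leq_Y$), so the first disjunct in the definition holds and $(i,j)\leq_{XY}(i,j)$.

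Next I would establish totality. Given $(i,j)$ and $(i',j')$, totality of $\leq_X$ gives $i\leq_X i'$ or $i'\leq_X i$; splitting into the cases $i<_X i'$, $i'<_X i$, or $i\sim_X i'$ handles everything: in the first two cases one of $(i,j)\leq_{XY}(i',j')$ or $(i',j')\leq_{XY}(i,j)$ holds directly via the ``$i<_X i'$'' disjunct; in the case $i\sim_X i'$, totality of $\leq_Y$ gives $j\leq_Y j'$ or $j'\leq_Y j$, and since $\sim_X$ is symmetric we get the corresponding comparison in $XY$. The one subtlety is checking that $<_X$ is a strict partial order compatible with $\sim_X$, i.e. that $i<_X i'$ together with $i'\sim_X i''$ implies $i<_X i''$, and that exactly one of $i<_X i'$, $i'<_X i$, $i\sim_X i'$ holds; these follow routinely from transitivity of $\leq_X$.

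The main work, and the step I expect to be the only mildly delicate one, is transitivity. Suppose $(i,j)\leq_{XY}(i',j')$ and $(i',j')\leq_{XY}(i'',j'')$. Each hypothesis is a disjunction of two cases, giving four combinations. If either hypothesis comes from the ``$<_X$'' disjunct, I would use transitivity of $<_X$ (and its compatibility with $\sim_X$, noted above) to conclude $i<_X i''$, hence $(i,j)\leq_{XY}(i'',j'')$. In the remaining case, both comparisons come from the ``$\sim_X$ and $\leq_Y$'' disjunct, so $i\sim_X i'\sim_X i''$ and $j\leq_Y j'\leq_Y j''$; transitivity of $\sim_X$ and of $\leq_Y$ then give $i\sim_X i''$ and $j\leq_Y j''$, so again $(i,j)\leq_{XY}(i'',j'')$. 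As the excerpt states, all of this is elementary, so I would simply record that $\leq_{XY}$ satisfies reflexivity, transitivity and totality, hence $(XY,\leq_{XY})$ is a quasi-ordered alphabet.
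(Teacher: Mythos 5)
Your verification is correct and is exactly the elementary axiom-by-axiom check that the paper intends (it explicitly leaves all proofs of this section to the reader), with the only delicate point — the compatibility of $<_X$ with $\sim_X$ needed in the mixed transitivity cases — correctly identified and justified from transitivity of $\leq_X$. Nothing is missing.
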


\begin{remark} 
If $X$ and $Y$ are ordered alphabets, then $XY$ is also ordered, and $\leq_{XY}$ is the lexicographic order.
\end{remark}

\begin{lemma}
\begin{enumerate}
\item Let $X$ and $Y$ be quasi-ordered alphabets.
\begin{align*}
&\forall i,i'\in X,\:j,j'\in Y,&
(i,j)\sim_{XY} (i',j')&\Longleftrightarrow (i\sim_X i')\mbox{ and }(j\sim_Y j').
\end{align*}
\item Let $X$, $Y$ and $Z$ be quasi-ordered alphabets. Then:
\begin{align*}
(XY)Z&=X(YZ),&(X\sqcup Y)Z&=(XZ)\sqcup (YZ),&X(Y\sqcup Z)&=(XY)\sqcup (XZ).
\end{align*}\end{enumerate}\end{lemma}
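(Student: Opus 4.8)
The plan is to verify the three displayed identities directly from the definitions of the two operations $\sqcup$ and $XY$ on quasi-ordered alphabets, treating them as equalities of quasi-ordered sets: first the underlying sets coincide, then the quasi-orders coincide. All three are routine once one unwinds the definitions, which is exactly why the excerpt says the proofs are left to the reader; here I just sketch the bookkeeping.

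For associativity $(XY)Z = X(YZ)$: the underlying set of both sides is $X\times Y\times Z$ (using the canonical identification $(X\times Y)\times Z = X\times(Y\times Z)$), so only the order needs checking. Given $(i,j,k)$ and $(i',j',k')$, I would expand $(i,j,k)\leq_{(XY)Z}(i',j',k')$ using the product rule: it holds iff $\bigl((i,j)\sim_{XY}(i',j')\text{ and }k\leq_Z k'\bigr)$ or $(i,j)<_{XY}(i',j')$. Now apply the Lemma on $\sim_{XY}$ to rewrite $(i,j)\sim_{XY}(i',j')$ as $i\sim_X i'$ and $j\sim_Y j'$, and expand $(i,j)<_{XY}(i',j')$ as $i<_X i'$ or $(i\sim_X i'$ and $j<_Y j')$. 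Collecting terms, one finds the condition is equivalent to: $i<_X i'$, or ($i\sim_X i'$ and $j<_Y j'$), or ($i\sim_X i'$ and $j\sim_Y j'$ and $k\leq_Z k'$). By symmetry the same Boolean expression comes out of expanding $\leq_{X(YZ)}$, so the two orders agree. (One should also note the analogous fact $(i,j,k)<_{(XY)Z}(i',j',k')$ reduces to the obvious ternary lexicographic strict condition, which is needed to make the expansion of the nested product go through; this is a short separate check using the definition of $<$ from the Notation.)

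For the left distributivity $(X\sqcup Y)Z = (XZ)\sqcup(YZ)$: the underlying set of the left side is $(X\sqcup Y)\times Z$, which is in canonical bijection with $(X\times Z)\sqcup(Y\times Z)$, the underlying set of the right side; fix this identification. For the order, take two elements and split into cases according to whether each lies over $X$ or over $Y$. If both lie over $X$ (resp.\ both over $Y$), the $\sqcup$-rule on the right makes the comparison the $XZ$-order (resp.\ $YZ$-order), and on the left the $(X\sqcup Y)Z$-comparison restricted to pairs with first coordinate in $X$ reduces, via the $\sqcup$-rule on $X\sqcup Y$ for the $\sim$ and $<$ relations (the Lemma on $\sim_{X\sqcup Y}$), to exactly the $XZ$-order. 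If one lies over $X$ and the other over $Y$: on the right the $\sqcup$-rule gives ``smaller'' in the $X$-over case and ``not smaller'' in the $Y$-over case; on the left, since $i\in X$ and $i'\in Y$ forces $i<_{X\sqcup Y}i'$ (hence never $i\sim_{X\sqcup Y}i'$ and never $i'\leq i$), the product rule again gives ``smaller'' resp.\ ``not smaller''. So the orders match in every case. The third identity $X(Y\sqcup Z) = (XY)\sqcup(XZ)$ is handled the same way, now splitting on whether the second coordinate lies over $Y$ or over $Z$ and using that $j<_{Y\sqcup Z}j'$ whenever $j\in Y$, $j'\in Z$, together with the $\sim_{Y\sqcup Z}$ Lemma.

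There is no real obstacle: the only thing to be careful about is the combination of quasi-orders (not orders), so one must use the characterizations of $\sim_{X\sqcup Y}$, $\sim_{XY}$, and of $<$, rather than pretending the relations are antisymmetric; the mildly fiddly point is keeping the nested strict-versus-weak comparisons straight in the associativity expansion, which is why I would record the auxiliary ternary-lexicographic description of $<_{(XY)Z}$ before collapsing the Boolean expressions.
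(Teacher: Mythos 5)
Your handling of part (1), of associativity, and of the right distributivity $(X\sqcup Y)Z=(XZ)\sqcup(YZ)$ is correct and is exactly the routine unwinding that the paper leaves to the reader (it gives no proof: all proofs of that section are declared elementary). In particular your auxiliary characterization $(i,j)<_{XY}(i',j')\Longleftrightarrow i<_X i'$ or ($i\sim_X i'$ and $j<_Y j'$) is right, and the Boolean collapse for $(XY)Z=X(YZ)$ goes through as you describe. One small point: you invoke the description of $\sim_{XY}$, which is part (1) of the very lemma being proved, without verifying it; it is a one-line check ($i<_X i'$ excludes $(i',j')\leq_{XY}(i,j)$, so mutual comparability forces $i\sim_X i'$, $j\leq_Y j'$ and $j'\leq_Y j$), but it belongs in the proof since it is part of the statement.

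The genuine problem is the last identity. Your claim that $X(Y\sqcup Z)=(XY)\sqcup(XZ)$ is ``handled the same way'' fails precisely in the mixed case $j\in Y$, $j'\in Z$: on the right-hand side every element of $XY$ is below every element of $XZ$, whereas on the left-hand side the product order gives priority to the \emph{first} coordinate, so $(i,j)\leq_{X(Y\sqcup Z)}(i',j')$ holds if and only if $i\leq_X i'$ (the condition $j\leq_{Y\sqcup Z}j'$ being automatic), and symmetrically $(i,j)\leq_{X(Y\sqcup Z)}(i',j')$ with $j\in Z$, $j'\in Y$ holds whenever $i<_X i'$. These agree with the disjoint-union order only when all elements of $X$ are pairwise $\sim_X$-equivalent. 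Concretely, take $X=\{1,2\}$ with $1<_X 2$ and $Y=\{y\}$, $Z=\{z\}$ singletons: in $X(Y\sqcup Z)$ one has $(1,z)<(2,y)$, while in $(XY)\sqcup(XZ)$ one has $(2,y)<(1,z)$. So the third identity, as stated with the paper's definitions, is in fact false, and no case analysis can rescue it; the asymmetry (the split coordinate must be the priority coordinate of the product) is exactly why the second identity works and the third does not. This is harmless for the rest of the paper, which only ever uses $(X\sqcup Y)Z=(XZ)\sqcup(YZ)$ (e.g.\ in Lemma \ref{propcoaction}), but your write-up should have detected the failure rather than asserting a symmetry that is not there.
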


\section{Algebras attached to alphabets}

\subsection{Definition}

\begin{defi}\label{defi9}
Let $X$ be a quasi-ordered alphabet and let $q\in \K$. We put:
\begin{align*}
A_q(X)&=\frac{\K[x_i, i\in X]\:[[x_{i,j}, i,j \in X,i\leq_X j]]\:[[x_{-\infty,j}, j\in X]]\:[[x_{i,+\infty}, i\in X]]}
{\langle x_i^2=qx_i,  i\in X\rangle},&\\
\bfA_q(X)&=\frac{\K\langle x_i, i\in X\rangle\:[[x_{i,j}, i,j \in X,i\leq_X j]]\:[[x_{-\infty,j}, j\in X]]\:
[[x_{i,+\infty}, i\in X]]}
{\langle x_iPx_i=qx_iP,  i\in X,\: P\in \bfA_q(X)\rangle}.
\end{align*}
Both of them are given their usual topology of rings of formal series.
\end{defi}

Elements of $A_q(X)$ are formal infinite spans of monomials
\[M=\prod_{i\in X} x_i^{\epsilon_i} \prod_{i\leq_X j} x_{i,j}^{\alpha_{i,j}}\prod_{j\in X} x_{-\infty,j}^{\beta_j}
\prod_{i\in X} x_{i,+\infty}^{\gamma_i},\]
where $\epsilon_i\in\{0,1\}$, $\alpha_{i,j},\beta_j,\gamma_i \in \N$, with only a finite number of them non-zero.
Elements of $\bfA_q(X)$ are formal infinite spans of monomials
\[M=x_{i_1}\ldots x_{i_k} \prod_{i\leq_X j} x_{i,j}^{\alpha_{i,j}}\prod_{j\in X} x_{-\infty,j}^{\beta_j}
\prod_{i\in X} x_{i,+\infty}^{\gamma_i},\]
where $i_1,\ldots,i_k$ are elements of $X$, all distinct, $\alpha_{i,j},\beta_j,\gamma_i \in \N$, with only a finite number of them non-zero.

\subsection{Doubling the alphabets}

\begin{prop}
Let $X,Y$ be two quasi-ordered alphabets. 
We define a continuous algebra morphism $\Delta_{X,Y}$ from $A_q(X\sqcup Y)$ to $A_q(X)\otimes A_q(Y)$ 
or from $\bfA_q(X\sqcup Y)$ to $\bfA_q(X)\otimes \bfA_q(Y)$ by:
\begin{align}
\label{E1} \Delta_{X,Y}(x_i)&=\begin{cases}
x_i\otimes 1\mbox{ if }i\in X,\\
1\otimes x_i\mbox{ if }i\in Y;
\end{cases}&
\Delta_{X,Y}(x_{i,j})&=\begin{cases}
x_{i,j}\otimes 1\mbox{ if }i,j\in X,\\
1\otimes x_{i,j}\mbox{ if }i,j\in Y,\\
x_{i,\infty}\otimes x_{-\infty,j}\mbox{ if }i\in X,\: j\in Y;
\end{cases}\\
\nonumber
\Delta_{X,Y}(x_{i,\infty})&=\begin{cases}
x_{i,\infty}\otimes 1\mbox{ if }i\in X,\\
1\otimes x_{i,\infty}\mbox{ if }i\in Y;
\end{cases}&
\Delta_{X,Y}(x_{-\infty,j})&=\begin{cases}
x_{-\infty,j}\otimes 1\mbox{ if }j\in X,\\
1\otimes x_{-\infty,j}\mbox{ if }j\in Y.
\end{cases}
\end{align}
\end{prop}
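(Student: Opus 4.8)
The plan is to verify that the formulas \eqref{E1} give a well-defined continuous algebra morphism by checking three things in turn: that the assignment extends to the free (resp. free noncommutative) algebra of formal series, that it descends to the quotient by the relations $x_i^2 = qx_i$ (resp. $x_iPx_i = qx_iP$), and that the target lands in $A_q(X)\otimes A_q(Y)$ (resp. $\bfA_q(X)\otimes\bfA_q(Y)$) rather than a larger completion. Since $A_q(X\sqcup Y)$ is a quotient of a polynomial-then-formal-series algebra on an explicit generating set, and the set of generators of $A_q(X\sqcup Y)$ is the disjoint union of four families indexed by $X\sqcup Y$, $X\sqcup Y$ (pairs), $X\sqcup Y$, $X\sqcup Y$, it suffices to assign an image to each generator and check the relations; the universal property of polynomial/free-series algebras then produces the morphism. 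So first I would observe that by the Proposition on disjoint unions, the generators $x_{i,j}$ of $A_q(X\sqcup Y)$ with $i\leq_{X\sqcup Y} j$ fall into exactly three cases: $i,j\in X$ (then $i\leq_X j$), $i,j\in Y$ (then $i\leq_Y j$), or $i\in X,\,j\in Y$ (automatic by definition of $\leq_{X\sqcup Y}$); the case $i\in Y,\,j\in X$ cannot occur. This is precisely why the third branch of the formula for $\Delta_{X,Y}(x_{i,j})$ is unambiguous, and it is the one point where the combinatorics of the quasi-order enters. The images proposed all lie in $A_q(X)\otimes A_q(Y)$: a monomial in the generators of $A_q(X\sqcup Y)$ maps to a tensor product of a monomial in the $X$-generators and a monomial in the $Y$-generators, and only finitely many generators occur in each, so continuity (i.e. the image of a formal series is again summable) is immediate from the filtration by total degree.

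Next I would check the relations. In the commutative case $A_q$, one needs $\Delta_{X,Y}(x_i)^2 = q\,\Delta_{X,Y}(x_i)$ for every $i\in X\sqcup Y$. If $i\in X$ this reads $(x_i\otimes 1)^2 = (x_i^2)\otimes 1 = (qx_i)\otimes 1 = q(x_i\otimes 1)$, using the relation already holding in $A_q(X)$; the case $i\in Y$ is symmetric. Hence the map factors through the quotient defining $A_q(X\sqcup Y)$, giving the continuous algebra morphism $\Delta_{X,Y}\colon A_q(X\sqcup Y)\to A_q(X)\otimes A_q(Y)$. In the noncommutative case $\bfA_q$, the relation is $x_iPx_i = qx_iP$ for all $P$; it suffices to check this on monomials $P$, and then on the images: writing $\Delta_{X,Y}(P) = \sum P'\otimes P''$, for $i\in X$ we get $\Delta_{X,Y}(x_iPx_i) = \sum (x_iP'x_i)\otimes P'' = \sum (qx_iP')\otimes P'' = q\,\Delta_{X,Y}(x_iP)$, again invoking the defining relation of $\bfA_q(X)$; and symmetrically for $i\in Y$. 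One should note the images of the $x_i$ in different tensor factors commute with images of $x_j$, $j$ in the other factor, and the "edge", "in-edge", "out-edge" generators are central-in-their-factor formal variables, so no ordering subtleties arise beyond those already handled by the $x_iPx_i$ relation.

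The step I expect to require the most care is confirming well-definedness at the level of the \emph{completed} tensor product, i.e. that $\Delta_{X,Y}$ genuinely maps into $A_q(X)\otimes A_q(Y)$ with its stated formal-series topology, and that it is continuous. The subtlety is that a single formal series in $A_q(X\sqcup Y)$ may involve infinitely many of the variables $x_{i,j}$ with $i\in X$, $j\in Y$, and each such variable maps to the product $x_{i,\infty}\otimes x_{-\infty,j}$ of two \emph{distinct} completed variables; one must check that the resulting expression is still a legitimate element of the completed tensor product, i.e. that for each total degree only finitely many terms contribute — which holds because in $A_q(X\sqcup Y)$ itself each fixed-degree component already involves only finitely many monomials. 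Since the excerpt explicitly says the proofs in the preceding sections are elementary and left to the reader, and since this Proposition is of the same flavour, I would present the argument concisely: define the map on generators, note the three-case analysis of $\leq_{X\sqcup Y}$ from the disjoint-union proposition, verify the defining relations as above, and remark that continuity and landing in the correct completed tensor product follow from the degree filtration. No deeper structural input (coassociativity, etc.) is needed here; those are deferred to later statements.
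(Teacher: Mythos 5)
Your proposal is correct and follows essentially the same route as the paper: the paper's proof simply checks that $\Delta_{X,Y}(x_i^2-qx_i)=0$ (resp.\ the analogous noncommutative relation) by the case split $i\in X$ or $i\in Y$, exactly as you do, with the remaining points (action on generators, continuity, the three-case analysis for $x_{i,j}$) treated as immediate. Your additional verifications are fine but amount to spelling out what the paper leaves to the reader.
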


\begin{proof} In the commutative case, we have to check that for any 
$i\in X\sqcup Y$, $\Delta_{X,Y}(x_i^2-qx_i)=0$. Indeed:
\[\Delta_{X,Y}(x_i^2-qx_i)=\begin{cases}
(x_i^2-qx_i)\otimes 1=0\mbox{ if }i\in X,\\
1\otimes (x_i^2-qx_i)=0\mbox{ if }i\in Y.
\end{cases}\]
So $\Delta_{X,Y}$ is well-defined. The proof in the noncommutative case is similar. \end{proof}

\begin{prop}\label{double}
Let $X$, $Y$ and $Z$ be quasi-ordered alphabets. Then:
\[(\Delta_{X,Y}\otimes Id)\circ \Delta_{X\sqcup Y,Z}=(Id \otimes \Delta_{Y,Z})\circ \Delta_{X,Y\sqcup Z},\]
seen as morphisms from $A_q(X\sqcup Y\sqcup Z)$ to $A_q(X)\otimes A_q(Y)\otimes A_q(Z)$, or
from $\bfA_q(X\sqcup Y\sqcup Z)$ to $\bfA_q(X)\otimes \bfA_q(Y)\otimes \bfA_q(Z)$.
\end{prop}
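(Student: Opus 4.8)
The plan is to reduce the claimed identity to a purely combinatorial statement about the disjoint union of alphabets, and then verify it on generators, since all maps in sight are continuous algebra morphisms. Because $A_q(X\sqcup Y\sqcup Z)$ (resp. $\bfA_q(X\sqcup Y\sqcup Z)$) is topologically generated as an algebra by the elements $x_i$, $x_{i,j}$, $x_{-\infty,j}$ and $x_{i,+\infty}$, and because both composites $(\Delta_{X,Y}\otimes Id)\circ \Delta_{X\sqcup Y,Z}$ and $(Id\otimes \Delta_{Y,Z})\circ\Delta_{X,Y\sqcup Z}$ are continuous algebra morphisms into $A_q(X)\otimes A_q(Y)\otimes A_q(Z)$, it suffices to check that the two composites agree on each of these four families of generators. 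So the first step I would take is simply to record this reduction, noting that it applies verbatim to the noncommutative case as well.

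Next I would run through the four families. For $x_i$ with $i\in X\sqcup Y\sqcup Z$, both composites send $x_i$ to $x_i$ placed in the tensor factor corresponding to whichever of $X$, $Y$, $Z$ contains $i$, with $1$ elsewhere; this is immediate from \eqref{E1}. The same trivial bookkeeping handles $x_{-\infty,j}$ and $x_{i,+\infty}$, since these generators behave exactly like the $x_i$ under $\Delta$ (they are never split). The only genuinely interesting case is $x_{i,j}$, and here I would split into subcases according to which blocks contain $i$ and $j$. If $i,j$ lie in the same block the computation is again trivial. The one case requiring attention is when $i$ and $j$ are in two different blocks among $X,Y,Z$; for instance, if $i\in X$ and $j\in Z$, one composite gives $\Delta_{X,Y}(x_{i,+\infty})\otimes x_{-\infty,j} = x_{i,+\infty}\otimes 1\otimes x_{-\infty,j}$, while the other gives $x_{i,+\infty}\otimes\Delta_{Y,Z}(x_{-\infty,j}) = x_{i,+\infty}\otimes 1\otimes x_{-\infty,j}$, and these match. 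The remaining cases ($i\in X$, $j\in Y$; $i\in Y$, $j\in Z$) are analogous and equally short. At this point all generators are checked, so the two morphisms coincide.

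I do not expect a real obstacle here: the statement is the coassociativity-type compatibility that makes $\Delta$ a coproduct, and it is forced by the associativity of $\sqcup$ (Lemma in the $\sqcup$-subsection, part 2) together with the definition of the quasi-order on $X\sqcup Y$, under which every element of an earlier block is below every element of a later block. If anything is slightly delicate, it is keeping the tensor-factor bookkeeping straight in the mixed case for $x_{i,j}$ — making sure the "internal edge from $i$ to $j$" splits consistently as "outgoing half-edge at $i$" $\otimes$ "$1$" $\otimes$ "incoming half-edge at $j$" regardless of the order in which the two doublings are performed. Since this is exactly the point of having separate generators $x_{i,+\infty}$ and $x_{-\infty,j}$ that are never themselves split, it goes through cleanly. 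I would therefore present the proof as: "Both composites are continuous algebra morphisms, so it suffices to check equality on the generators $x_i$, $x_{i,j}$, $x_{-\infty,j}$, $x_{i,+\infty}$; this is immediate except for $x_{i,j}$ with $i,j$ in different blocks, which we treat by the three symmetric subcases above. The noncommutative case is identical."
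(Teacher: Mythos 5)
Your proposal is correct and follows essentially the same route as the paper: both composites are (continuous) algebra morphisms, so one checks equality on the generators $x_i$, $x_{i,j}$, $x_{-\infty,j}$, $x_{i,+\infty}$, the only nontrivial bookkeeping being the mixed-block cases for $x_{i,j}$, which you resolve exactly as the paper's table does (e.g.\ $x_{i,\infty}\otimes 1\otimes x_{-\infty,j}$ for $i\in X$, $j\in Z$). Nothing is missing.
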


\begin{proof} It is enough to apply these two algebra morphisms on generators. We find:
\[(\Delta_{X,Y}\otimes Id)\circ \Delta_{X\sqcup Y,Z}(x_i)=(Id \otimes \Delta_{Y,Z})\circ \Delta_{X,Y\sqcup Z}(x_i)
=\begin{cases}
x_i\otimes 1\otimes 1\mbox{ if }i\in X,\\
1\otimes x_i\otimes 1\mbox{ if }i\in Y,\\
1\otimes 1\otimes x_i\mbox{ if }i\in Z.
\end{cases}\]
When applied to $x_{i,j}$, we find for both of them:
\[\begin{array}{c|c|c|c|c}
i\setminus j&\in X&\in Y&\in Z&\infty\\
\hline -\infty&x_{-\infty,j}\otimes 1\otimes 1&1\otimes x_{-\infty,j}\otimes 1&1\otimes 1\otimes x_{-\infty,j}&\times\\
\hline \in X&x_{i,j}\otimes 1\otimes 1&x_{i,\infty}\otimes x_{-\infty,j}\otimes 1&x_{i,\infty}\otimes 1\otimes x_{-\infty,j}&x_{i,\infty}\otimes 1\otimes 1\\
\hline \in Y&\times&1\otimes x_{i,j}\otimes 1&1\otimes x_{i,\infty}\otimes x_{-\infty,j}&1\otimes x_{i,\infty}\otimes 1\\
\hline \in Z&\times&\times&1\otimes 1\otimes x_{i,j}&1\otimes 1\otimes x_{i,\infty}
\end{array}\]	
So these morphisms are equal. \end{proof}

\subsection{Squaring the alphabets}

\begin{prop}
Let $X,Y$ be two nonempty quasi-ordered alphabets. There exists a unique continuous algebra morphism $\delta_{X,Y}$ from $A_q(XY)$ 
to $A_{q_1}(X)\otimes A_{q_2}(Y)$, or from $\bfA_q(XY)$ to $\bfA_{q_1}(X)\otimes \bfA_{q_2}(Y)$
such that:
\begin{align}
\label{E2} \delta_{X,Y}(x_{(i,i')})&=x_i\otimes x_{i'},&
\delta_{X,Y}(x_{-\infty,(j,j')})&=x_{-\infty,j}\otimes x_{-\infty,j'},\\
\nonumber \delta_{X,Y}(x_{(i,i'),\infty})&=x_{i,\infty}\otimes x_{i',\infty},&
\delta_{X,Y}(x_{(i,i'),(j,j')})&=\begin{cases}
x_{i,j}\otimes x_{i',\infty}x_{-\infty,j'}\mbox{ if }i<_X j,\\
1\otimes x_{i',j'}\mbox{ if }i\sim_{X} j\mbox{ and }i'\leq_{Y} j'.
\end{cases}
\end{align}
if, and only if, $q=q_1q_2$.
\end{prop}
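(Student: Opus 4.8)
The plan is to show that the assignment on generators given by \eqref{E2} defines a continuous algebra morphism precisely when the defining relations $x_{(i,i')}^2 = q\, x_{(i,i')}$ of $A_q(XY)$ are respected, and that this forces $q = q_1 q_2$. First I would observe that $A_q(XY)$ is, as a topological algebra, the completion of a quotient of a free commutative algebra on the listed families of variables: it is presented by the generators $x_{(i,i')}$, $x_{(i,i'),(j,j')}$ (for $(i,i') \leq_{XY} (j,j')$), $x_{-\infty,(j,j')}$, $x_{(i,i'),\infty}$, subject only to the relations $x_{(i,i')}^2 = q\, x_{(i,i')}$. Hence to define a continuous algebra morphism out of $A_q(XY)$ it suffices (and is necessary, for the relations) to specify images of these generators that lie in the correct completed tensor product and that satisfy the images of those relations. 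So the one genuine thing to check is that $\delta_{X,Y}(x_{(i,i')})^2 = q\,\delta_{X,Y}(x_{(i,i')})$ holds in $A_{q_1}(X) \otimes A_{q_2}(Y)$, i.e.\ $(x_i \otimes x_{i'})^2 = q\,(x_i\otimes x_{i'})$, which reads $x_i^2 \otimes x_{i'}^2 = q\,(x_i \otimes x_{i'})$, that is $q_1 x_i \otimes q_2 x_{i'} = q\, x_i \otimes x_{i'}$. Since $X$ and $Y$ are nonempty, the element $x_i \otimes x_{i'}$ is nonzero, so this holds for all $i,i'$ if and only if $q = q_1 q_2$. This disposes of both the existence and the "if and only if."

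Before that, I would also verify that the proposed images are well-typed: each right-hand side in \eqref{E2} must be a legitimate element of $A_{q_1}(X) \otimes A_{q_2}(Y)$ — in particular the two-case formula for $\delta_{X,Y}(x_{(i,i'),(j,j')})$ must exhaust exactly the pairs with $(i,i') \leq_{XY} (j,j')$. By the definition of $\leq_{XY}$ from the Product proposition, $(i,i') \leq_{XY} (j,j')$ means either $i <_X j$, or ($i \sim_X j$ and $i' \leq_Y j'$); these are precisely the two cases listed, and in the first case the target monomial $x_{i,j} \otimes x_{i',\infty} x_{-\infty,j'}$ is defined since $i \leq_X j$ (indeed $i <_X j$), while in the second case $x_{i',j'}$ is defined since $i' \leq_Y j'$. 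Similarly $x_{-\infty,j} \otimes x_{-\infty,j'}$, $x_{i,\infty} \otimes x_{i',\infty}$, and $x_i \otimes x_{i'}$ are all visibly in the completed tensor product. Uniqueness of $\delta_{X,Y}$ is immediate: an algebra morphism is determined by its values on a generating set, and continuity extends it uniquely from the dense subalgebra of polynomial-type elements to all formal series.

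For the noncommutative version $\bfA_q(XY) \to \bfA_{q_1}(X) \otimes \bfA_{q_2}(Y)$, the same argument applies with the relations $x_i P x_i = q\, x_i P$ in place of $x_i^2 = q x_i$. Here I would check that for any $P \in \bfA_q(XY)$, writing $\delta_{X,Y}(P) = \sum_\ell a_\ell \otimes b_\ell$, we have $\delta_{X,Y}(x_{(i,i')} P x_{(i,i')}) = \sum_\ell (x_i a_\ell x_i) \otimes (x_{i'} b_\ell x_{i'})$, and since $x_i a_\ell x_i = q_1 x_i a_\ell$ and $x_{i'} b_\ell x_{i'} = q_2 x_{i'} b_\ell$ in the respective algebras, this equals $q_1 q_2 \sum_\ell (x_i a_\ell) \otimes (x_{i'} b_\ell) = q_1 q_2\, \delta_{X,Y}(x_{(i,i')} P)$; one then invokes $q = q_1 q_2$ and nonemptiness as before. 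The only subtlety, and the one place I would be careful, is the interplay between the completed (formal series) topology and the tensor product: one must make sure that $\delta_{X,Y}$, defined first on monomials, is continuous for the order filtration by the $x_{i,j}$, $x_{-\infty,j}$, $x_{i,\infty}$ variables, so that it extends to all formal series and so that the verification of the relation on a general series $P$ reduces by continuity to the case of monomials. This is routine because each generator is sent to something of degree $\geq$ its own degree in the filtration, but it is the step that needs a sentence of care rather than a one-line dismissal.

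\textbf{Main obstacle.} I do not expect a serious obstacle: the heart of the statement is the single computation $q_1 q_2 = q$ forced by $(x_i \otimes x_{i'})^2 = q(x_i \otimes x_{i'})$, and everything else is bookkeeping about which generators exist and a standard continuity/density argument. If anything requires attention, it is matching the case split in the formula for $\delta_{X,Y}(x_{(i,i'),(j,j')})$ against the definition of $\leq_{XY}$ and confirming the noncommutative relation passes through the tensor factors independently.
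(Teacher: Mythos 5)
Your proposal is correct and follows essentially the same route as the paper: one checks that the images of the generators respect the defining relations, and the computation $(x_i\otimes x_{i'})^2=q_1q_2\,x_i\otimes x_{i'}$ forces $q=q_1q_2$, with nonemptiness giving the ``only if'' direction. Your extra remarks on the case split matching $\leq_{XY}$, continuity, and the explicit noncommutative verification $x_iax_i\otimes x_{i'}bx_{i'}=q_1q_2\,x_ia\otimes x_{i'}b$ simply flesh out what the paper dispatches with ``the noncommutative case is proved in the same way.''
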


\begin{proof} In the commutative case, we have to check that for any $(i,i')\in XY$, 
$\delta_{X,Y}(x_{(i,i')}^2-qx_{(i,i')})=0$. We compute:
\begin{align*}
\delta_{X,Y}(x_{(i,i')}^2-qx_{(i,i')})&=(x_i\otimes x_{i'})^2-q x_i\otimes x_{i'}\\
&=x_i^2\otimes x_{i'}^2-q x_i\otimes x_{i'}\\
&=q_1q_2 x_i\otimes x_{i'}-qx_i\otimes x_{i'}\\
&=(q_1q_2-q)x_i\otimes x_{i'}.
\end{align*}
So this holds if, and only if, $q_1q_2=q$. 
The noncommutative case is proved in the same way. \end{proof}

\begin{remark}
 In particular, if $q=q_1=q_2$, $\delta_{X,Y}$ exists if, and only if, $q=1$ or $q=0$. 
\end{remark}

\begin{lemma}
Let $q_1,q_2,q_3 \in \K$ and let $X$, $Y$ and $Z$ be quasi-ordered alphabets. The following diagrams commute:
\begin{align*}
&\xymatrix{A_{q_1q_2q_3}(XYZ)\ar[r]^{\delta_{X,YZ}} \ar[d]_{\delta_{XY,Z}}& 
A_{q_1 q_2}(XY)\otimes A_{q_3}(Z) \ar[d]^{\delta_{X,Y}\otimes Id}\\
A_{q_1}(X)\otimes A_{q_2 q_3}(YZ)\ar[r]_{Id\otimes \delta_{Y,Z}}&A_{q_1}(X)\otimes A_{q_2}(Y)\otimes A_{q_3}(Z)}\\ \\
&\xymatrix{\bfA_{q_1q_2q_3}(XYZ)\ar[r]^{\delta_{X,YZ}} \ar[d]_{\delta_{XY,Z}}& 
\bfA_{q_1 q_2}(XY)\otimes \bfA_{q_3}(Z) \ar[d]^{\delta_{X,Y}\otimes Id}\\
\bfA_{q_1}(X)\otimes \bfA_{q_2 q_3}(YZ)\ar[r]_{Id\otimes \delta_{Y,Z}}&\bfA_{q_1}(X)
\otimes \bfA_{q_2}(Y)\otimes \bfA_{q_3}(Z)}
\end{align*}
\end{lemma}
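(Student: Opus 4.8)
The plan is to prove the commutativity of both diagrams by the same computation, since the only difference between $A_q$ and $\bfA_q$ lies in the relations satisfied by the $x_i$, which play no role in the associativity check. Because all maps involved are continuous algebra morphisms into a ring of formal series, it suffices to check that the two composites agree on each of the four families of generators of $A_{q_1q_2q_3}(XYZ)$, namely $x_{(i,i',i'')}$, $x_{-\infty,(j,j',j'')}$, $x_{(i,i',i''),\infty}$, and $x_{(i,i',i''),(j,j',j'')}$, where $(i,i',i'')$ and $(j,j',j'')$ range over $XYZ=(XY)Z=X(YZ)$ (using the associativity $(XY)Z=X(YZ)$ from the earlier lemma, which also ensures both sides are built on the same alphabet).

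First I would dispatch the three "easy" families. For $x_{(i,i',i'')}$, both $\delta_{X,YZ}$ followed by $\delta_{Y,Z}\otimes Id$ on the second tensorand, and $\delta_{XY,Z}$ followed by $\delta_{X,Y}\otimes Id$ on the first tensorand, send it to $x_i\otimes x_{i'}\otimes x_{i''}$; this is immediate from \eqref{E2} applied twice. The families $x_{-\infty,(j,j',j'')}$ and $x_{(i,i',i''),\infty}$ are handled identically, giving $x_{-\infty,j}\otimes x_{-\infty,j'}\otimes x_{-\infty,j''}$ and $x_{i,\infty}\otimes x_{i',\infty}\otimes x_{i'',\infty}$ respectively. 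Here I would also record the compatibility $q_{1}q_{2}q_{3}=(q_1q_2)q_3=q_1(q_2q_3)$ that makes all the intermediate $\delta$-morphisms exist with the right scalars, which is exactly why the labels on the two routes match.

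The main obstacle, and the only step requiring care, is the edge generator $x_{(i,i',i''),(j,j',j'')}$. Its image depends on how the pair $\bigl((i,i',i''),(j,j',j'')\bigr)$ sits in the quasi-order of $XYZ$, and by the description of $\leq_{XY}$ the relevant trichotomy on the first coordinate block is: $i<_X j$; or $i\sim_X i'$, wait — rather $i\sim_X j$ together with $i'<_Y j'$; or $i\sim_X j$, $i'\sim_Y j'$ together with $i''\leq_Z j''$ (using the earlier lemma's description of $\sim_{XY}$ in terms of $\sim_X$ and $\sim_Y$). In each of these three cases I would compute $\delta_{X,Y}\otimes Id$ applied to $\delta_{XY,Z}(x_{(i,i',i''),(j,j',j'')})$ and, separately, $Id\otimes\delta_{Y,Z}$ applied to $\delta_{X,YZ}(x_{(i,i',i''),(j,j',j'')})$, and check they coincide; the $i<_X j$ case yields $x_{i,j}\otimes x_{i',\infty}x_{-\infty,j'}\otimes x_{i'',\infty}x_{-\infty,j''}$, the middle case yields $1\otimes x_{i',j'}\otimes x_{i'',\infty}x_{-\infty,j''}$, and the last yields $1\otimes 1\otimes x_{i'',j''}$. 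The verification is a finite case analysis parallel to the table in the proof of Proposition \ref{double}; I would present it as a short table with rows indexed by the position of the first block and columns by that of the second (including the $\infty$ column for $x_{(i,i',i''),\infty}$), which makes the agreement of the two composites visually evident. Since the noncommutative computation is word-for-word the same on generators, this simultaneously proves the second diagram, completing the proof.
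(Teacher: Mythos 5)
Your proposal is correct and follows essentially the same route as the paper: reduce to generators via the (continuous) algebra-morphism property, treat the vertex and external-edge generators immediately, and resolve the internal-edge generator $x_{(i,i',i''),(j,j',j'')}$ by the trichotomy $i<_X j$, or $i\sim_X j$ and $i'<_Y j'$, or $i\sim_X j$, $i'\sim_Y j'$, $i''\leq_Z j''$, with the same three resulting values as in the paper's case list, and the noncommutative diagram handled verbatim.
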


\begin{proof} As these two maps are algebra morphisms, it is enough to prove that they coincide 
on generators of $A_{q_1q_2q_3}(XYZ)$ or $\bfA_{q_1q_2q_3}(XYZ)$. 
Let $i,i'\in X$, $j,j'\in Y$, $k,k'\in Z$.
\begin{itemize}
\item Both send $x_{(i,j,k)}$ to $x_i\otimes x_j\otimes x_k$.
\item Both send $x_{(i,j,k),\infty}$ to $x_{i,\infty}\otimes x_{j,\infty} \otimes x_{k,\infty}$.
\item Both send $x_{-\infty,(i',j',k')}$ to $x_{-\infty,i'}\otimes x_{-\infty,j'}\otimes x_{-\infty,k'}$.
\item Both send $x_{(i,j,k),(i',j'k')}$ to $\begin{cases}
x_{i,i'}\otimes x_{j,\infty}x_{-\infty,j'}\otimes x_{k,\infty}x_{-\infty,k'}\mbox{ if }i<_X i',\\
1\otimes x_{j,j'}\otimes x_{k,\infty}x_{-\infty,k'}\mbox{ if }i\sim_X i' \mbox{ and } j<_Y j',\\
1\otimes 1\otimes x_{k,k'} \mbox{ if }i\sim_X i'\mbox{ and }j\sim_Y j'.
\end{cases}$
\end{itemize}
So $(\delta_{X,Y}\otimes Id)\circ \delta_{XY,Z}=(Id \otimes \delta_{Y,Z})\circ \delta_{X,YZ}$. \end{proof}

\begin{lemma}\label{propcoaction}
Let $q_1,q_2\in \K$ and let $X$, $Y$, $Z$ be quasi-ordered alphabets. The following diagram commutes:
\[\xymatrix{A_{q_1q_2}((X\sqcup Y)Z)=A_{q_1q_2}(XZ\sqcup YZ)\ar[r]^{\hspace{5mm}\Delta_{XZ,YZ}}
\ar[dd]_{\delta_{X\sqcup Y,Z}}&A_{q_1q_2}(XZ)\otimes A_{q_1q_2}(YZ)\ar[d]^{\delta_{XZ}\otimes \delta_{YZ}}\\
&A_{q_1}(X)\otimes A_{q_2}(Z)\otimes A_{q_1}(Y)\otimes A_{q_2}(Z)\ar[d]^{m_{1,3,24}}\\
A_{q_1}(X\sqcup Y)\otimes A_{q_2}(Z)\ar[r]_{\Delta_{X,Y}\otimes Id}&A_{q_1}(X)\otimes A_{q_1}(Y)\otimes A_{q_2}(Z)}\]
where:
\[m_{1,3,24}:\left\{\begin{array}{rcl}
A_{q_1}(X)\otimes A_{q_2}(Z)\otimes A_{q_1}(Y)\otimes A_{q_2}(Z)&\longrightarrow&A_{q_1}(X)\otimes A_{q_1}(Y)\otimes A_{q_2}(Z)\\
x\otimes z_1\otimes y\otimes z_2&\longrightarrow&x\otimes y\otimes z_1z_2.
\end{array}\right.\]
\end{lemma}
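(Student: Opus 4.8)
The plan is to verify the commutativity of the square by evaluating both composites on the topological algebra generators of $A_{q_1q_2}((X\sqcup Y)Z)$, since all four arrows are continuous algebra morphisms and hence determined by their values on generators. The generators are $x_{(a,c)}$, $x_{(a,c),\infty}$, $x_{-\infty,(b,c)}$ and $x_{(a,c),(b,c')}$, where $a\in X\sqcup Y$ and $b\in X\sqcup Y$, $c,c'\in Z$; the key point is to organize the case analysis according to whether $a,b$ lie in $X$ or in $Y$, and, when they lie in the same component, whether $a<b$ or $a\sim b$ (with the induced quasi-order). One should recall from the earlier lemma that $(X\sqcup Y)Z=(XZ)\sqcup(YZ)$ as quasi-ordered alphabets and that $(a,c)\leq_{(X\sqcup Y)Z}(b,c')$ translates, when $a,b$ are in the same component, into the condition for $XZ$ (resp.\ $YZ$), and is automatic when $a\in X$, $b\in Y$.

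First I would treat the three easy families of generators. For $x_{(a,c)}$: going right then down, $\Delta_{XZ,YZ}$ sends it to $x_{(a,c)}\otimes 1$ or $1\otimes x_{(a,c)}$ according to the component of $a$, then $\delta_{XZ}\otimes\delta_{YZ}$ gives $x_a\otimes x_c\otimes 1\otimes 1$ or $1\otimes 1\otimes x_a\otimes x_c$, and $m_{1,3,24}$ yields $x_a\otimes 1\otimes x_c$ or $1\otimes x_a\otimes x_c$. Going down then along: $\delta_{X\sqcup Y,Z}$ sends it to $x_a\otimes x_c$ with $x_a\in A_{q_1}(X\sqcup Y)$, and $\Delta_{X,Y}\otimes Id$ splits $x_a$ into the correct tensor leg. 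These agree. The generators $x_{(a,c),\infty}$ and $x_{-\infty,(b,c)}$ are handled identically, using $\delta(x_{(a,c),\infty})=x_{a,\infty}\otimes x_{c,\infty}$ and $\delta(x_{-\infty,(b,c)})=x_{-\infty,b}\otimes x_{-\infty,c}$ together with the formulas for $\Delta_{X,Y}$ on $x_{a,\infty}$ and $x_{-\infty,b}$. The only subtlety is bookkeeping of which tensor factor $x_c$ or $x_{c,\infty}$ lands in, but $m_{1,3,24}$ is precisely designed to merge the two $Z$-legs, so it comes out right.

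The main case, and the one I expect to be the genuine obstacle, is the edge generator $x_{(a,c),(b,c')}$. Here one must subdivide carefully. If $a$ and $b$ lie in different components, say $a\in X$, $b\in Y$, then in $(X\sqcup Y)Z$ one has $a<b$ hence $(a,c)<_{(X\sqcup Y)Z}(b,c')$, so $\delta_{X\sqcup Y,Z}$ uses its first branch: $x_{a,b}\otimes x_{c,\infty}x_{-\infty,c'}$; but $x_{a,b}$ with $a\in X$, $b\in Y$ is an edge generator of $A_{q_1}(X\sqcup Y)$ crossing the two components, so $\Delta_{X,Y}$ sends it to $x_{a,\infty}\otimes x_{-\infty,b}$, giving $x_{a,\infty}\otimes x_{-\infty,b}\otimes x_{c,\infty}x_{-\infty,c'}$. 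On the other route, $\Delta_{XZ,YZ}$ (with $(a,c)\in XZ$, $(b,c')\in YZ$) yields $x_{(a,c),\infty}\otimes x_{-\infty,(b,c')}$, then $\delta_{XZ}\otimes\delta_{YZ}$ gives $x_{a,\infty}\otimes x_{c,\infty}\otimes x_{-\infty,b}\otimes x_{-\infty,c'}$, and $m_{1,3,24}$ produces $x_{a,\infty}\otimes x_{-\infty,b}\otimes x_{c,\infty}x_{-\infty,c'}$. These match. The case $b\in X$, $a\in Y$ is impossible since then $b<a$ contradicts $(a,c)\leq(b,c')$, so there is no generator to check. If $a,b$ lie in the same component, say both in $X$ (the case both in $Y$ is symmetric), then $x_{(a,c),(b,c')}$ is a generator of $A_{q_1q_2}(XZ)$, $\Delta_{XZ,YZ}$ sends it to $x_{(a,c),(b,c')}\otimes 1$, and applying $\delta_{XZ}\otimes\delta_{YZ}$ then $m_{1,3,24}$ gives: if $a<_X b$, $x_{a,b}\otimes 1\otimes x_{c,\infty}x_{-\infty,c'}$; if $a\sim_X b$ and $c\leq_Z c'$, $1\otimes 1\otimes x_{c,c'}$. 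Down the other side, $\delta_{X\sqcup Y,Z}$ reads $(a,c)$ versus $(b,c')$ in $(X\sqcup Y)Z$: if $a<_{X\sqcup Y}b$ (equivalently $a<_X b$) it returns $x_{a,b}\otimes x_{c,\infty}x_{-\infty,c'}$ and $\Delta_{X,Y}$ keeps $x_{a,b}$ in the left leg; if $a\sim_{X\sqcup Y}b$ (equivalently $a\sim_X b$) and $c\leq_Z c'$ it returns $1\otimes x_{c,c'}$, and $\Delta_{X,Y}\otimes Id$ gives $1\otimes 1\otimes x_{c,c'}$. Again equal. The obstacle is purely combinatorial: one has to be scrupulous that the definitions of the quasi-orders on $X\sqcup Y$, on $XZ$, on $YZ$ and on $(X\sqcup Y)Z$ are mutually compatible so that each ``branch'' of $\delta$ and $\Delta$ is triggered consistently on both routes. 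Once one confirms, using the two lemmas of Section~1, that ``$a<b$ in a component'' $\Leftrightarrow$ ``$(a,c)<(b,c')$ in the product in the appropriate direction'' and that cross-component pairs force the $\infty$-branch on both sides, the verification closes. Since the two composites agree on every generator and are continuous algebra morphisms on a topological ring of formal series, they are equal, proving the lemma.
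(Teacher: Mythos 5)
Your proposal is correct and takes essentially the same route as the paper: reduce to a check on the topological algebra generators of $A_{q_1q_2}((X\sqcup Y)Z)=A_{q_1q_2}(XZ\sqcup YZ)$ and run the case analysis on $x_{(a,c)}$, $x_{(a,c),\infty}$, $x_{-\infty,(b,c)}$ and the edge generators $x_{(a,c),(b,c')}$ according to the components of $a,b$ and the branches $a<b$, $a\sim b$; your values on each generator agree with the paper's table. The one step you assert rather than prove is that $m_{1,3,24}$ is an algebra morphism (so that the right-hand composite is itself an algebra morphism and the generator check is legitimate): this is the paper's explicit first step, it uses the commutativity of $A_{q_2}(Z)$ since the two $Z$-legs get multiplied, and it is precisely the point that fails for $\bfA_{q_2}(Z)$ in the noncommutative setting (hence the paper's subsequent remark), so that short verification should be included.
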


\begin{proof} \textit{First step}. Let us prove that $m_{1,3,24}$ is an algebra morphism. Let $X=x\otimes z_1\otimes y\otimes z_2$ and
$X'=x'\otimes z_1'\otimes y'\otimes z_2'$ in $A_{q_1}(X)\otimes A_{q_2}(Z)\otimes A_{q_1}(Y)\otimes A_{q_2}(Z)$.
\begin{align*}
m_{1,3,24}(XX')&=m_{1,3,24}(xx'\otimes z_1z_1'\otimes yy'\otimes z_2z_2')\\
&=xx'\otimes yy'\otimes z_1z_1'z_2z_2';\\
m_{1,3,24}(X)m_{1,3,24}(X')&=(x\otimes y\otimes z_1z_2)(x'\otimes y'\otimes z_1'z_2')\\
&=xx'\otimes yy'\otimes z_1z_2z'_1z'_2.
\end{align*}
As $A_{q_1q_2}(Z)$ is commutative, $m_{1,3,24}(XX')=m_{1,3,24}(X)m_{1,3,24}(X')$.\\

\textit{Second step.} By composition, both
$(\delta_{X,Y}\otimes Id)\circ \delta_{X\sqcup Y,Z}$ and $(Id \otimes \delta_{Y,Z})\circ \delta_{X,Y\sqcup Z}$ are algebra morphisms:
it is enough to prove that they coincide on the generators of $A_{q_1q_2}((X\sqcup Y)Z)$ or $\bfA_{q_1q_2}((X\sqcup Y)Z)$. 
Let $i,i'\in X$, $j,j'\in Y$ and $k,k'\in Z$.
\begin{itemize}
\item Both send $x_{(i,k)}$ to $x_i\otimes 1\otimes x_k$ and $x_{j,k}$ to $1\otimes x_j\otimes x_k$.
\item Both send $x_{(i,k),\infty}$ to $x_{i,\infty}\otimes 1\otimes x_{k,\infty}$ and $x_{(j,k),\infty}$ to $1\otimes x_{j,\infty}\otimes x_{k,\infty}$.
\item Both send $x_{-\infty,(i',k')}$ to $x_{-\infty,i'}\otimes 1\otimes x_{-\infty,k'}$ and $x_{-\infty,(j',k')}$ to $1\otimes x_{-\infty,j'}\otimes x_{-\infty,k'}$.
\item Both send $x_{(i,k),(i',k')}$ to $\begin{cases}
x_{i,i'}\otimes 1\otimes x_{k,\infty}x_{-\infty,k'} \mbox{ if }i<_X i',\\
1\otimes 1\otimes x_{k,k'} \mbox{ if }i\sim_X i'.
\end{cases}$
\item Both send  $x_{(j,k),(j',k')}$ to $\begin{cases}
1\otimes x_{j,j'}\otimes x_{k,\infty}x_{-\infty,k'} \mbox{ if }j<_Y j',\\
1\otimes 1\otimes x_{k,k'} \mbox{ if }j\sim_Y j'.
\end{cases}$
\item Both send $x_{(i,k),(j',k')}$ to $x_{i,\infty}\otimes x_{-\infty,j}\otimes x_{k,\infty}x_{-\infty,k'}$.
\end{itemize}
Therefore, they are equal. \end{proof}

\begin{remark}
This does not work for morphisms 
\begin{align*}
&\bfA_{q_1q_2}((X\sqcup Y)Z)=\bfA_{q_1q_2}((XZ)\sqcup (YZ))\longrightarrow \bfA_{q_1}(X)\otimes \bfA_{q_1}(Y)\otimes \bfA_{q_2}(Z).
\end{align*}
But, if we put $\rho_{X,Y}=(Id \otimes p_Y)\circ \delta_{X,Y}:\bfA_{q_1q_2}(X\sqcup Y)\longrightarrow \bfA_{q_1}(X)\otimes A_{q_2}(Y)$,
where $p_Y$ is the canonical surjection from $\bfA_ {q_2}(Y)$ to $A_{q_2}(Y)$, then
\[m_{1,3,24}\circ (\rho_{X,Z}\otimes \rho_{Y,Z})\circ \Delta_{XZ,YZ}
=(\Delta_{X,Y}\otimes Id)\circ\rho_{X\sqcup Y,Z},\]
seen as morphisms 
\begin{align*}
&\bfA_{q_1q_2}((X\sqcup Y)Z)=\bfA_{q_1q_2}((XZ)\sqcup (YZ))\longrightarrow \bfA_{q_1}(X)\otimes \bfA_{q_1}(Y)\otimes A_{q_2}(Z).
\end{align*}
The proof is identical to the one of Lemma \ref{propcoaction}.
\end{remark}

\section{Feynman graphs}

\subsection{Definition}

\begin{defi}
A Feynman graph $G$ is given by:
\begin{itemize}
\item A non-empty, finite set $HE(G)$ of half-edges, with a map $type_G:HE(G)\longrightarrow \{out,in\}$.
\item A non-empty, finite set $V(G)$ of vertices.
\item An incidence map for half-edges, that is to say an involution $i_G:HE(G)\longrightarrow HE(G)$.
\item A source map for half-edges, that is to say a map $s_G:HE(G)\longrightarrow V(G)$.
\end{itemize}
The incidence rule must be respected:
\begin{align*}
&\forall e\in HE(G),&\:(i_G(e)\neq e)\Longrightarrow type_G(e)\neq type_G\circ i_G(e).
\end{align*}
The set of external half-edges of $G$ is:
\[Ext(G)=\{e \mid e\in HE, i_G(e)=e\}.\] 
The set of Feynman graphs is denoted by $\fg$.
\end{defi}

We shall use graphical representations of Feynman graphs: vertices are represented by $\xymatrix{\rond{}}$,
half-edges of type $out$ are represented by 
$\xymatrix{\rond{}\ar[r]&}$, half-edges of type $in$ by $\xymatrix{\ar[r]&\rond{}}$; the incidence map glues two such half-edges
to obtain an oriented internal edge of the Feynman graph $\xymatrix{\rond{}\ar[r]&\rond{}}$. 
In the sequel, we shall write \emph{Feynman graph} instead of \emph{isoclass of Feynman graphs}. 
We shall also consider \emph{ordered Feynman graphs}, that is to say Feynman graphs such that the set of vertices is given a total order.

\begin{remark} 
We restraint ourselves to Feynman graphs with a unique type of edges. It is possible to do the same
for several type of edges, adding generators to the algebras associated to totally quasi-ordered alphabets.
\end{remark}

\begin{example}\label{ex1}
Here are examples of Feynman graphs:
\begin{align*}
G&=\mbox{\parbox{3mm}{\xymatrix{&&\\
&\rond{b}\ar[lu] \ar[ru]&\\
&\rond{a}\ar@/^/[u]\ar@/_/[u]&\\
&\ar[u]&}}}&
H&=\mbox{\parbox{3mm}{\xymatrix{&\ar[d]&\\
&\rond{d}\ar[d]&\\
&\rond{c}\ar[dr]&\\
\ar[ru]&&}}}
\end{align*}\end{example}

\begin{defi}
Let $G_1,\ldots,G_k$ be Feynman graphs and let $\sigma:V(G_1)\sqcup\ldots \sqcup V(G_k) \twoheadrightarrow C$ be a surjective map.
We define a Feynman graph $G=\sigma(G_1,\ldots,G_k)$ in the following way:
\begin{itemize}
\item $V(\sigma(G_1,\ldots,G_k))=C$.
\item The set of half-edges of $\sigma(G_1,\ldots,G_k)$ is:
\begin{align*}
HE(G)&=\bigsqcup_{i=1}^k HE(G_i)\setminus\{e\in HE(G_i)\mid i_{G_i}(e)\neq e,\: \sigma \circ s_{G_i}(e)
=\sigma\circ s_{G_i}\circ i_{G_i}(e)\}.
\end{align*}
\item If $e\in HE(G_i)\cap HE(\sigma(G_1,\ldots,G_k))$, then:
\begin{align*}
type_G(e)&=type_{G_i}(e),&i_G(e)&=i_{G_i}(e),&s_G(e)&=\sigma\circ s_{G_i}(e).
\end{align*}\end{itemize}\end{defi}

Roughly speaking, $G\sqcup_\sigma H$ is obtained by identifying the vertices of the disjoint union of Feynman graphs
$G_1\ldots G_k$ with the same image by $\sigma$, and deleting the loops created in this process.
In particular, if $\sigma=Id_{V(G_1)\sqcup \ldots \sqcup V(G_k)}$, or more generally if $\sigma$ is bijective, then $\sigma(G_1,\ldots,G_k)$ is (isomorphic to)
the disjoint union $G_1\ldots G_k$. \\

\begin{remark}
If $G_1,\ldots,G_k$ are ordered Feynman graphs, then $G_1\ldots G_k$ is also ordered: if $x,y\in V(G_1)\sqcup \ldots \sqcup V(G_k)$, 
\[x\leq y\mbox{ if } (x\in V(G_i), \: y\in V(G_j), \: i<j)\mbox{ or } (x,y\in V(G_i), x\leq y).\]
We deduce a total order on $\sigma(G_1,\ldots,G_k)$ in this way: for any $x,y \in C$,
\[x\leq y \mbox{ if }\min(\sigma^{-1}(x))\leq \min(\sigma^{-1}(y)).\]
\end{remark}

\begin{example}
If $G$ and $H$ are the Feynman graphs of Example \ref{ex1} and:
\[\sigma:\left\{\begin{array}{rcl}
a&\mapsto&i\\
b&\mapsto&j\\
c&\mapsto&i\\
d&\mapsto&k\\
\end{array}\right.\]
then:
\[F\sqcup_\sigma G=\mbox{\parbox{3mm}{\xymatrix{&&\ar[d]\\
&\rond{j}\ar[u]\ar[lu]&\rond{k}\ar[ld]\\
&\rond{i}\ar@/^/[u]\ar@/_/[u]\ar[d]&&\\
\ar[ru]&&\ar[lu]}}}\]
\end{example}

We shall use the following particular case:

\begin{defi}\label{defi20}
Let $G$ and $H$ be Feynman graphs, $A\subseteq V(G)$ and $\sigma:A\longrightarrow V(H)$ an injection.
We define an equivalence $\sim$ on $V(G)\sqcup V(H)$ by:
\[\forall a,b \in V(G)\sqcup V(H),\: a\sim_\sigma b \mbox{ if }(a=b)\mbox{ or }(a\in A \mbox{ and }b=\sigma(a)) \mbox{ or }(b\in A \mbox{ and }a=\sigma(b)).\]
Let $\pi$ be the canonical surjection from $V(G) \sqcup V(H)$ to $(V(G)\sqcup V(H))/\sim$.
The Feynman graph $\pi(G,H)$ is denoted by $G\sqcup_\sigma H$. 
\end{defi}

In particular, if $A=\emptyset$, $G\sqcup_\sigma H$ is the disjoint union $GH$.

\subsection{Monomials and Feynman graphs}

\begin{defi}
\begin{enumerate}
\item Let $M$ be a monomial of $A_q(X)$:
\[M=\prod_{i\in X} x_i^{\epsilon_i} \prod_{i\leq_X j} x_{i,j}^{\alpha_{i,j}}\prod_{j\in X} x_{-\infty,j}^{\beta_j}
\prod_{i\in X} x_{i,+\infty}^{\gamma_i}.\]
\begin{enumerate}
\item We shall say that $M$ is \emph{admissible} if:
\begin{align*}
&\forall i,j\in X,&(\alpha_{i,j}\geq 1)&\Longrightarrow (\epsilon_i=\epsilon_j=1),\\
&\forall i\in X,&(\alpha_{i,\infty}\geq 1)&\Longrightarrow (\epsilon_i=1),\\
&\forall j\in X,&(\alpha_{-\infty,j}\geq 1)&\Longrightarrow (\epsilon_j=1).
\end{align*}
\item If $M$ is admissible, we attach to $M$ a Feynman graph $fg(M)$, defined in this way:
\begin{itemize}
\item The set of vertices of $fg(M)$ is the set of elements $i\in X$, such that $\epsilon_i=1$.
\item The number of internal edges $\xymatrix{\rond{i}\ar[r]&\rond{j}}$ in $fg(M)$ is $\alpha_{i,j}$.
\item The number of external edges $\xymatrix{\ar[r]&\rond{j}}$ in $fg(M)$ is $\beta_j$.
\item The number of external edges $\xymatrix{\rond{i}\ar[r]&}$ in $fg(M)$ is $\gamma_i$.
\end{itemize} \end{enumerate}
\item Let $M$ be a monomial of $\bfA_q(X)$:
\[M=x_{i_1}\ldots x_{i_k} \prod_{i\leq_X j} x_{i,j}^{\alpha_{i,j}}\prod_{j\in X} x_{-\infty,j}^{\beta_j}
\prod_{i\in X} x_{i,+\infty}^{\gamma_i}.\]
We shall say that $M$ is admissible if its image $\overline{M}$ in $A_q(X)$ (which is a monomial) is admissible.
The Feynman graph  $fg(\overline{M})$ attached to the image of $M$ in the quotient $A_q(X)$ of $\bfA_q(M)$ is ordered:
the set of its vertices is $\{i_1,\ldots,i_k\}$, totally ordered by $i_1<\ldots <i_k$.
This ordered Feynman graph is denoted by $fg(M)$.
\end{enumerate}\end{defi}

\begin{example} 
Let $a,b\in X$, $a\leq_X b$, $a\neq b$. In $\bfA_q(X)$:
\begin{align*}
fg(x_ax_b x_{-\infty,a}x_{a,b}^2x_{b,\infty}^3)&=
\mbox{\parbox{3mm}{\xymatrix{&&\\
&\rond{2}\ar[lu]\ar[u]\ar[ru]&\\
&\rond{1}\ar@/^/[u]\ar@/_/[u]&\\
&\ar[u]&}}},&
fg(x_bx_a x_{-\infty,a}x_{a,b}^2x_{b,\infty}^3)&=
\mbox{\parbox{3mm}{\xymatrix{&&\\
&\rond{1}\ar[lu]\ar[u]\ar[ru]&\\
&\rond{2}\ar@/^/[u]\ar@/_/[u]&\\
&\ar[u]&}}}.
\end{align*}
In $A_q(X)$:
\begin{align*}
fg(x_ax_b x_{-\infty,a}x_{a,b}^2x_{b,\infty}^3)&=
\mbox{\parbox{3mm}{\xymatrix{&&\\
&\rond{}\ar[lu]\ar[u]\ar[ru]&\\
&\rond{}\ar@/^/[u]\ar@/_/[u]&\\
&\ar[u]&}}}.
\end{align*}
\end{example}

\begin{lemma}\label{lemme16}
Let $M$ and $N$ be two admissible monomials in $A_q(X)$ or in $\bfA_q(X)$. We put $A=V(fg(M))\cap V(fg(N))$ and $\sigma:A\longrightarrow V(fg(N))$ 
be the canonical injection. There exists an admissible monomial $P$, 
such that in $A_q(X)$:
\[MN=q^{|A|} P.\]
Moreover:
\[fg(P)=\pi_\sigma(fg(M),fg(N)).\]
\end{lemma}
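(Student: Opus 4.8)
The plan is to analyze the product $MN$ directly at the level of the underlying monomials and track how the relations $x_i^2 = q x_i$ interact with the generators coming from edges. Write
\[M=\prod_{i\in X} x_i^{\epsilon_i} \prod_{i\leq_X j} x_{i,j}^{\alpha_{i,j}}\prod_{j\in X} x_{-\infty,j}^{\beta_j}\prod_{i\in X} x_{i,+\infty}^{\gamma_i},\qquad N=\prod_{i\in X} x_i^{\epsilon'_i} \prod_{i\leq_X j} x_{i,j}^{\alpha'_{i,j}}\prod_{j\in X} x_{-\infty,j}^{\beta'_j}\prod_{i\in X} x_{i,+\infty}^{\gamma'_i}.\]
First I would observe that the only non-squarefree situation in $MN$ is the appearance of $x_i^2$ precisely for those $i$ with $\epsilon_i = \epsilon'_i = 1$, i.e. for $i\in A = V(fg(M))\cap V(fg(N))$; for each such $i$ we rewrite $x_i^2 = q x_i$, producing one factor of $q$ per element of $A$. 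All the edge-variables $x_{i,j}, x_{-\infty,j}, x_{i,+\infty}$ are unaffected (they are formal power series variables with no relation), so their exponents simply add. Hence $MN = q^{|A|} P$ in $A_q(X)$, where $P$ is the monomial with $\epsilon_i'' = \max(\epsilon_i,\epsilon'_i)$, $\alpha_{i,j}'' = \alpha_{i,j}+\alpha'_{i,j}$, and similarly for $\beta, \gamma$.

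Next I would check that $P$ is admissible. The vertex set of $fg(P)$ is $\{i : \epsilon_i'' = 1\} = V(fg(M))\cup V(fg(N))$. For admissibility one needs $\alpha_{i,j}'' \geq 1 \Rightarrow \epsilon_i'' = \epsilon_j'' = 1$, and likewise for the half-edge exponents. If $\alpha_{i,j}'' \geq 1$ then $\alpha_{i,j}\geq 1$ or $\alpha'_{i,j}\geq 1$; in the first case admissibility of $M$ gives $\epsilon_i = \epsilon_j = 1$, hence $\epsilon_i'' = \epsilon_j'' = 1$, and symmetrically in the second case. The arguments for $\beta_j''$ and $\gamma_i''$ are identical. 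So $P$ is admissible and $fg(P)$ is well-defined.

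It then remains to identify $fg(P)$ with $\pi_\sigma(fg(M),fg(N))$ from Definition \ref{defi20}. By construction $V(fg(P)) = V(fg(M))\cup V(fg(N))$, which is exactly $(V(fg(M))\sqcup V(fg(N)))/\sim_\sigma$, where $\sim_\sigma$ glues $a\in A$ to $\sigma(a)$; and under this identification the canonical surjection $\pi_\sigma$ sends a vertex of $fg(M)$ or $fg(N)$ labelled $i$ to the vertex $i$ of $fg(P)$. Now count edges: in $\pi_\sigma(fg(M),fg(N))$ the number of internal edges $\rond{i}\to\rond{j}$ is the total number of such edges in $fg(M)$ and $fg(N)$ together (no loop-deletion occurs here, because the glued vertices of $A$ are never joined by an edge in the disjoint union $fg(M)\sqcup fg(N)$ — an internal edge lies entirely inside $fg(M)$ or entirely inside $fg(N)$, and within each the endpoints are distinct elements of $X$), which is $\alpha_{i,j} + \alpha'_{i,j} = \alpha_{i,j}''$, matching $fg(P)$. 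The same bookkeeping applies to the external half-edges: their counts add, giving $\beta_j'' $ and $\gamma_i''$. In the noncommutative setting $\bfA_q(X)$ one runs the identical argument but tracks the order of the $x_i$'s: passing to the quotient $A_q(X)$ is exactly what the statement about $fg(P)$ refers to, and the orderings match since the word of surviving $x_i$ letters in $MN$ induces the glued total order. The one point requiring a little care — the main (though minor) obstacle — is precisely the claim that no loops are created, i.e. that the loop-deletion step in the definition of $\pi_\sigma(\cdot,\cdot)$ is vacuous here; this follows because the gluing injection $\sigma$ is the identity on the common labels and every edge variable $x_{i,j}$ already requires $i,j\in X$ to be $\leq_X$-comparable within a single factor, so an edge never straddles the two graphs. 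This completes the proof.
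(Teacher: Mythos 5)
Your proof is correct and takes essentially the same route as the paper's, which simply writes out the two monomials, applies $x_i^2=qx_i$ (respectively $x_iPx_i=qx_iP$) for each $i\in A$, and reads off $P$ with the summed exponents; you additionally make explicit the admissibility of $P$ and the identification $fg(P)=\pi_\sigma(fg(M),fg(N))$, which the paper leaves implicit. One small imprecision: since $i\leq_X i$, the generators $x_{i,i}$ are allowed and $fg(M)$ may already contain loops, so the endpoints of an internal edge need not be distinct elements of $X$ — but your essential point stands, namely that $\sigma$ never identifies two distinct vertices of the same factor, so no loops are \emph{created} by the gluing and the deletion step in $\pi_\sigma$ plays no role.
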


\begin{proof} We work in $A_q(X)$; the proof for $\bfA_q(X)$ is similar. We put:
\begin{align*}
M&=\prod_{i\in B} x_i \prod_{i\leq_X j} x_{i,j}^{\alpha_{i,j}}\prod_{j\in X} x_{-\infty,j}^{\beta_j}
\prod_{i\in X} x_{i,+\infty}^{\gamma_i},\\
M&=\prod_{i\in C} x_i \prod_{i\leq_X j} x_{i,j}^{\alpha'_{i,j}}\prod_{j\in X} x_{-\infty,j}^{\beta'_j}
\prod_{i\in X} x_{i,+\infty}^{\gamma'_i},
\end{align*}
then, as $x_i^2=qx_i$ in $A_q(X)$:
\begin{align*}
P&=\prod_{i\in B\cup C} x_i^{\epsilon_i} \prod_{i\leq_X j} x_{i,j}^{\alpha_{i,j}+\alpha'_{i,j}}\prod_{j\in X} x_{-\infty,j}^{\beta_j+\beta'_j}
\prod_{i\in X} x_{i,+\infty}^{\gamma_i+\gamma'_i},
\end{align*}
and $MN=q^{|A|} P$. \end{proof}

\begin{theo}\label{theoprod}
\begin{enumerate}
\item Let $G$ be a Feynman graph and let $X$ a quasi-ordered alphabet. We put:
\begin{align*}
M_G(X)&=\sum_{\substack{\mbox{\scriptsize $M$ admissible monomial},\\fg(M)\approx G}} M\in A_q(X),\\
H_{\fg}(X)&=Vect(M_G(X)\mid G\mbox{ Feynman graph}).
\end{align*}
Then $H_{\fg}(X)$ is a subalgebra of $A_q(X)$. For any Feynman graphs $G$ and $H$:
\begin{align*}
M_G(X)M_H(X)&=\sum_{\sigma: V(G)\supseteq A\hookrightarrow V(H)} q^{|A|} M_{G\sqcup_\sigma H}(X).
\end{align*}
Moreover, there exists a quasi-ordered alphabet $X$, such that the elements $M_G(X)$ 
are linearly independent in $A_q(X)$.
\item \begin{enumerate}
\item Let $G$ be an ordered Feynman graph and let $X$ a quasi-ordered alphabet. We put:
\begin{align*}
\bfM_G(X)&=\sum_{\substack{\mbox{\scriptsize $M$ admissible monomial},\\fg(M)\approx G}} M\in \bfA_q(X),\\
\bfH_{\fg}(X)&=Vect(\bfM_G(X)\mid G\mbox{ ordered Feynman graph}).
\end{align*}
Then $\bfH_{\fg}(X)$ is a subalgebra of $\bfA_q(X)$. For any ordered Feynman graphs $G$ and $H$:
\begin{align*}
\bfM_G(X)\bfM_H(X)&=\sum_{\sigma: V(G)\supseteq A\hookrightarrow V(H)} q^{|A|} \bfM_{G\sqcup_\sigma H}(X).
\end{align*}
There exists a quasi-ordered alphabet $X$, such that the elements $\bfM_G(X)$ are linearly independent in $\bfA_q(X)$.
\end{enumerate}
\end{enumerate}
\end{theo}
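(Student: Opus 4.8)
The plan is to prove part (1) in detail and then observe that part (2) follows by an almost verbatim repetition, the only change being that admissible monomials in $\bfA_q(X)$ record an ordering of the vertices, so $fg(M)$ is an ordered Feynman graph and the gluing $G\sqcup_\sigma H$ of ordered graphs carries the induced order described in the remark after Definition \ref{defi20}. So I focus on $H_\fg(X)$.

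First I would check that $H_\fg(X)$ is closed under multiplication by computing $M_G(X)M_H(X)$ directly. Expanding both sums, a general term is a product $MN$ with $M,N$ admissible and $fg(M)\approx G$, $fg(N)\approx H$. Here I invoke Lemma \ref{lemme16}: with $A=V(fg(M))\cap V(fg(N))$ and $\sigma$ the canonical injection, one has $MN=q^{|A|}P$ for an admissible monomial $P$ with $fg(P)=\pi_\sigma(fg(M),fg(N))=fg(M)\sqcup_\sigma fg(N)$. The subtle point is bookkeeping: I must reorganize the double sum over pairs $(M,N)$ according to (i) the resulting graph and (ii) the combinatorial data of \emph{how} the two vertex sets overlap inside $X$. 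Concretely, fixing abstract copies of $G$ and $H$, a pair $(M,N)$ is the same as a choice of an injection $u\colon V(G)\hookrightarrow X$, an injection $v\colon V(H)\hookrightarrow X$ (placing the vertices), together with a choice of edge/half-edge multiplicities compatible with $G$ and $H$ respectively. The vertex identifications occurring in the product are exactly the pairs $(g,h)$ with $u(g)=v(h)$; this data is precisely a partial injection $\sigma\colon A\subseteq V(G)\hookrightarrow V(H)$. Conversely, given such a $\sigma$, the monomials $P$ arising are exactly the admissible monomials with $fg(P)\approx G\sqcup_\sigma H$, and each is hit once (the edge/half-edge multiplicities of $P$ are just the sums coming from $M$ and $N$, which determine $M$ and $N$ uniquely once $\sigma$ and the common support in $X$ are fixed). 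Summing over all admissible $P$ with $fg(P)\approx G\sqcup_\sigma H$ for each $\sigma$ gives $\sum_\sigma q^{|A|}M_{G\sqcup_\sigma H}(X)$, which is the claimed formula; in particular it lies in $H_\fg(X)$, so $H_\fg(X)$ is a subalgebra.

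For the linear independence statement, the idea is to take $X$ large enough (for instance $X=\N$ with its usual total order, or any infinite ordered set) so that the leading monomials separate the graphs. The point is that for each Feynman graph $G$ one can pick a ``canonical'' admissible monomial $M_G$ realizing $G$ — place the $|V(G)|$ vertices on the first $|V(G)|$ letters of $X$ in some fixed way, and take the corresponding edge and half-edge multiplicities — and observe that $M_G$ occurs in $M_{G'}(X)$ with nonzero coefficient only if $G'\approx G$. Given a finite linear combination $\sum_G \lambda_G M_G(X)=0$, restrict attention to the $G$ of maximal number of vertices, say $n$; the coefficient of a monomial supported on exactly $n$ vertices of $X$ in $M_{G'}(X)$ vanishes unless $|V(G')|=n$, so looking at the canonical monomials $M_G$ of these top graphs forces each $\lambda_G=0$, and one descends by induction on $|V(G)|$. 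The only mild care needed is that when $|V(G)|<n$, a monomial for $G$ could also appear inside $M_{G'}(X)$ for a larger $G'$ (if some vertices of $G'$ are placed ``off'' the support); this is handled precisely by doing the induction from the top down so that those larger contributions have already been killed.

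I expect the main obstacle to be the bijective bookkeeping in the product formula: making fully rigorous the claim that the double sum over pairs of admissible monomials, after the $x_i^2=qx_i$ reductions, reorganizes into $\sum_{\sigma\colon V(G)\supseteq A\hookrightarrow V(H)}q^{|A|}M_{G\sqcup_\sigma H}(X)$ with the correct power of $q$ and no over- or under-counting. The key is that Lemma \ref{lemme16} already isolates the nontrivial algebraic content (the $q^{|A|}$ factor and the identification $fg(P)=fg(M)\sqcup_\sigma fg(N)$), so what remains is genuinely combinatorial and can be phrased as: the map $(M,N)\mapsto(\sigma, P)$ is a bijection from $\{(M,N): fg(M)\approx G,\ fg(N)\approx H\}$ onto $\bigsqcup_{\sigma}\{P \text{ admissible}: fg(P)\approx G\sqcup_\sigma H\}$ when one also remembers the embedding of $V(G\sqcup_\sigma H)$ into $X$. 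Everything else — well-definedness of $M_G(X)$ as an element of the completed algebra, continuity, the order decoration in the $\bfA_q$ case — is routine and I would dispatch it briefly.
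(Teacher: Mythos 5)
Your product step follows the paper's route (Lemma \ref{lemme16} plus combinatorial bookkeeping), but your linear-independence argument breaks at the very first move: you propose to take $X=\N$ with its usual total order, i.e.\ an \emph{ordered} alphabet. For such an $X$ one has $M_G(X)=0$ for every Feynman graph $G$ containing a cycle --- this is precisely Lemma \ref{graphessanscycles} of the paper: the generators $x_{i,j}$ only exist for $i\leq_X j$, so realizing a cycle would force $j_1\leq_X\cdots\leq_X j_k\leq_X j_1$ with the $j_p$ distinct, impossible in an order. Hence over any ordered alphabet the family $(M_G(X))_G$ is certainly not linearly independent, and no leading-monomial induction can repair that. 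The paper's choice is different and essential: take $X$ infinite with the total quasi-order in which $x\leq_X y$ for \emph{all} $x,y$; then every graph (cycles, loops, multiple edges included) admits an admissible monomial, so every $M_G(X)\neq 0$, and independence is immediate because an admissible monomial determines $fg(M)$ up to isomorphism, so the supports of the $M_G(X)$, as $G$ runs over isoclasses, are pairwise disjoint. In particular your top-down induction on the number of vertices rests on a misconception: every monomial occurring in $M_{G'}(X)$ satisfies $fg(M)\approx G'$, so a monomial attached to a smaller graph never occurs in $M_{G'}(X)$ for a larger $G'$, and no induction is needed once the right alphabet is chosen.

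A secondary caution on the product formula: your assertion that $(M,N)\mapsto(\sigma,P)$ is a bijection with "each $P$ hit once" is not literally true at the level of bare monomials. For $G=H$ a single vertex and $\sigma=\emptyset$, the pairs $(x_1,x_2)$ and $(x_2,x_1)$ yield the same monomial $x_1x_2$, so the fibers carry symmetry factors coming from isomorphisms of the glued graph; your parenthetical "remembering the embedding of $V(G\sqcup_\sigma H)$ into $X$" changes the target set and does not by itself restore the count of each admissible monomial exactly once. The paper's own proof is equally terse here ("all such terms are obtained"), so this is a shared imprecision rather than a divergence of method, but if you intend to make the bookkeeping fully rigorous you must track labelings (as in the ordered, noncommutative case of part 2, where the vertex order removes the ambiguity) rather than claim a bijection onto monomials.
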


\begin{proof} 1. By Lemma \ref{lemme16}, $M_G(X)M_H(X)$ is a linear sums of terms $q^{|A|} P$, 
with $fg(P)=\pi_\sigma(F,G)$. It remains to show that all such terms are obtained, 
which is immediate. So $H_{\fg}(X)$ is a subalgebra of $A_q(X)$.\\

Let $X$ be an infinite set. We give it a relation $\leq_X$ by:
\begin{align*}
&\forall x,y\in X,&x\leq_X y,
\end{align*}
making it a quasi-ordered alphabet. Let $G$ be a Feynman graph. For any $i,j\in V(G)$, we define:
\begin{itemize}
\item $\alpha_{i,j}$ is the number of internal edges $\xymatrix{\rond{i}\ar[r]&\rond{j}}$ in $G$.
\item $\beta_j$ is the number of external edges $\xymatrix{\ar[r]&\rond{j}}$ in $G$.
\item $\gamma_i$ is the number of external edges $\xymatrix{\rond{i}\ar[r]&}$ in $G$.
\end{itemize}
Let $\tau:V(G)\longrightarrow X$ be an injection: this exists, as $V(G)$ is finite and $X$ is infinite. We consider:
\[M=\prod_{i\in V(G)} x_{\tau(i)} \prod_{i,j\in V(G)}x_{\tau(i),\tau(j)}^{\alpha_{i,j}} \prod_{j\in V(G)}x_{-\infty,\tau(j)}^{\beta_j} \prod_{i\in V(G)}x_{\tau(i),\infty}^{\gamma_i}.\]
Obviously, $M$ is admissible and $fg(M)\approx G$, so $M_G(X)$ is non-zero.
As the elements of $M_G(X)$ are all non-zero and their support are disjoint, they are linearly independent. \\

2. Similar proof. \end{proof}

\begin{remark}
In particular, if $q=0$:
\begin{align*}
M_G(X)M_H(X)&=M_{GH}(X), &\bfM_G(X)\bfM_H(X)&=\bfM_{G.H}(X).
\end{align*} \end{remark}

\subsection{The first coproduct}

\begin{defi}
Let $G$ be a Feynman graph, and let $A\subseteq V(G)$. 
\begin{enumerate}
\item We define a Feynman graph $G_{\mid A}$ by the following:
\begin{itemize}
\item The set of vertices of $G_{\mid A}$ is $A$.
\item The set of half-edges of $G_{\mid A}$ is the set of half-edges $e\in HE(G)$ such that $s_G(e)\in A$.
\item For all half-edge $e$ of $G_{\mid A}$:
\begin{align*}
type_{G_{\mid A}}(e)&=type_G(e),\\
s_{G_{\mid A}}(e)&=s_G(e),\\
i_{G_{\mid A}}(e)&=\begin{cases}
i_G(e)\mbox{ if }s_G(i_G(e))\in A,\\
e\mbox{ otherwise}.
\end{cases}\end{align*}\end{itemize}
\item We shall say that $A$ is an \emph{ideal} of $G$ if for all $i,j\in V(G)$, if $i\in A$ and if
 there is an edge from $i$ to $j$ in $G$, then $j\in A$. The set of ideals of $G$ is denoted by $I(G)$.
\end{enumerate}\end{defi}

Note that if $G$ is an ordered Feynman graph, then for any $A$, $G_{\mid A}$ is also ordered.

\begin{theo}\label{theocop}
Let $G$ be a Feynman graph. For any quasi-ordered alphabets $X$ and $Y$:
\begin{align*}
\Delta_{X,Y}(M_G(X\sqcup Y))&=\sum_{A\in I(G)}M_{G_{\mid V(G)\setminus A}}(X)\otimes M_{G_{\mid A}}(Y),\\
\Delta_{X,Y}(\bfM_G(X\sqcup Y))&=\sum_{A\in I(G)}\bfM_{G_{\mid V(G)\setminus A}}(X)\otimes \bfM_{G_{\mid A}}(Y).
\end{align*}\end{theo}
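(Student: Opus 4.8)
The proof is a direct computation on the polynomial level, exploiting that $\Delta_{X,Y}$ is a continuous algebra morphism so it suffices to track what happens to a single admissible monomial $M$ with $fg(M)\approx G$ under $\Delta_{X,Y}$, and then to reorganize the resulting sum. I would treat only the commutative case $A_q$ explicitly (the $\bfA_q$ case being identical, replacing $M_G$ by $\bfM_G$ and keeping the vertex order, which is respected because $G_{\mid A}$ and $G_{\mid V(G)\setminus A}$ inherit the order). Fix an injection $\tau:V(G)\hookrightarrow X\sqcup Y$ realizing an admissible monomial $M$; write $A=\tau^{-1}(Y)\cap V(G)$ for the set of vertices sent into $Y$ and $B=V(G)\setminus A$ those sent into $X$.

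\textbf{Step 1: apply $\Delta_{X,Y}$ to the monomial $M$.} Using the formulas \eqref{E1}, compute $\Delta_{X,Y}(M)$ factor by factor: each $x_{\tau(i)}$ with $i\in B$ goes to $x_{\tau(i)}\otimes 1$, each with $i\in A$ to $1\otimes x_{\tau(i)}$; each internal-edge variable $x_{\tau(i),\tau(j)}$ goes to $x_{\tau(i),\tau(j)}\otimes 1$ if $i,j\in B$, to $1\otimes x_{\tau(i),\tau(j)}$ if $i,j\in A$, and to $x_{\tau(i),\infty}\otimes x_{-\infty,\tau(j)}$ if $i\in B,\ j\in A$; the half-edge variables $x_{-\infty,\tau(j)}$ and $x_{\tau(i),\infty}$ go to their left or right tensor-leg according to whether $\tau(j),\tau(i)\in X$ or $\in Y$. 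Crucially, the case $i\in A,\ j\in B$ \emph{cannot occur} for an admissible monomial unless there is no such edge: for a monomial coming from $G$, an edge from $i$ to $j$ exists exactly when $\alpha_{i,j}\ge 1$, and since $i<_{X\sqcup Y}j$ forces $j\in Y$ whenever $i\in Y$ (elements of $X$ precede elements of $Y$), the presence of an edge from $i$ to $j$ with $i\in A$ forces $j\in A$. This is precisely the condition that $A$ be an \emph{ideal} of $G$ in the sense of the definition — so only ideals $A\in I(G)$ contribute.

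\textbf{Step 2: identify the two tensor legs as monomials of subgraphs.} Having produced $\Delta_{X,Y}(M)=M'\otimes M''$, I claim $fg(M')\approx G_{\mid B}$ and $fg(M'')\approx G_{\mid A}$. The left leg $M'$ collects: the vertices $i\in B$; the internal edges of $G$ with both endpoints in $B$ (these are exactly the internal edges of $G_{\mid B}$); the original outgoing half-edges of $G$ at vertices of $B$ \emph{together with} one extra $x_{i,\infty}$ for each internal edge $i\to j$ with $i\in B,\ j\in A$ — which is exactly how $G_{\mid B}$ treats such an edge, namely as a newly-created outgoing external half-edge at $i$ (compare the definition of $i_{G_{\mid A}}$); and the incoming half-edges of $G$ at vertices of $B$. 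This matches the definition of $G_{\mid B}=G_{\mid V(G)\setminus A}$ on the nose. Symmetrically the right leg $M''$ yields $G_{\mid A}$, the edges $i\to j$ with $i\in B,\ j\in A$ contributing a new incoming external half-edge $x_{-\infty,j}$ at $j$. One must also check admissibility of $M'$ and $M''$, which is immediate from admissibility of $M$.

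\textbf{Step 3: sum and reorganize.} Now sum over all admissible $M$ with $fg(M)\approx G$. Each such $M$ is (up to isomorphism) determined by its underlying labelled data, and partitioning these monomials according to the ideal $A=\tau^{-1}(Y)$ gives
\[
\Delta_{X,Y}(M_G(X\sqcup Y))=\sum_{A\in I(G)}\ \sum_{\substack{M'\ \mathrm{adm.},\ fg(M')\approx G_{\mid V(G)\setminus A}}}M'\ \otimes\ \sum_{\substack{M''\ \mathrm{adm.},\ fg(M'')\approx G_{\mid A}}}M''\,,
\]
which is exactly $\sum_{A\in I(G)}M_{G_{\mid V(G)\setminus A}}(X)\otimes M_{G_{\mid A}}(Y)$. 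The only subtlety here is bijectivity: given an ideal $A$ and admissible monomials $M'$ on $X$ realizing $G_{\mid V(G)\setminus A}$ and $M''$ on $Y$ realizing $G_{\mid A}$, one recovers a unique $M$ on $X\sqcup Y$ realizing $G$ by gluing — the external half-edges of $G_{\mid V(G)\setminus A}$ created by the restriction are matched with those of $G_{\mid A}$ via the edges of $G$ crossing the ideal. I expect the main obstacle to be exactly this bookkeeping in Step 2–3: making fully precise that the "new external half-edges" produced by $\Delta_{X,Y}$ on internal edges crossing from $B$ to $A$ correspond bijectively and compatibly with the external-half-edge creation in the definition of $G_{\mid A}$ and $G_{\mid V(G)\setminus A}$, so that no monomial is counted twice and none is missed. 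Everything else is a mechanical unwinding of \eqref{E1} and the definitions. The noncommutative statement follows verbatim, the vertex orders being preserved at every step.
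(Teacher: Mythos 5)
Your proof is correct and follows essentially the same route as the paper: the paper packages the very same monomial-by-monomial computation as a bijection between triples $(M,M_1,M_2)$ with $fg(M)\approx G$, $\Delta_{X,Y}(M)=M_1\otimes M_2$ and triples $(A,M_1,M_2)$ with $A\in I(G)$, $fg(M_1)=G_{\mid V(G)\setminus A}$, $fg(M_2)=G_{\mid A}$, using exactly your observation that a variable $x_{i,j}$ only exists when $i\leq_{X\sqcup Y}j$ (so the set of vertices placed in $Y$ is an ideal) and that crossing edges are sent to $x_{i,\infty}\otimes x_{-\infty,j}$, matching the new external half-edges of the restricted graphs. The gluing bookkeeping you single out in Steps 2--3 is precisely the bijectivity claim the paper states without detail.
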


\begin{proof} We consider the two following sets:
\begin{itemize}
\item $\mathcal{A}$ is the set of triples $(M,M_1,M_2)$, where $M$, $M_1$ and $M_2$ are monomials of respectively
$A_q(X\sqcup Y)$, $A_q(X)$ and $A_q(Y)$, such that $\Delta_{X,Y}(M)=M_1\otimes M_2$ and $fg(M)=G$.
\item $\mathcal{B}$ is the set of triples $(A,M_1,M_2)$, where $A$ is an ideal of $G$, $M_1$ is an admissible monomial of $A_q(X)$ such
that $fg(M_1)=G_{\mid V(G)\setminus A}$, $M_2$ is an admissible monomial of $A_q(Y)$ such that $fg(M_2)=G_{\mid A}$.
\end{itemize}
Let $(M,M_1,M_2)\in \mathcal{A}$. Let $A$ be the set of vertices of $fg(M)$ belonging to $Y$. By Definition of $\Delta_{X,Y}$,
as $\Delta_{X,Y}(M)=M_1\otimes M_2$, $fg(M_1)=G_{\mid V(G)\setminus A}$ and $fg(M_2)=G_{\mid A}$.
Moreover, if $i\in A$ and if there is an edge from $i$ to $j$ in $G$, this means that $x_{i,j}$ appears in $M$,
so $i\leq_{X\sqcup Y} j$. As $i\in Y$, $j\in Y$, so $j\in A$: $A\in I(G)$. We define in this way a map:
\[\left\{\begin{array}{rcl}
\mathcal{A}&\longrightarrow&\mathcal{B}\\
(M,M_1,M_2)&\longrightarrow&(Vert(fg(M))\cap Y,M_1,M_2)
\end{array}\right.\]
It is not difficult to prove that it is a bijection. Hence:
\begin{align*}
\Delta_{X,Y}(M_G(X\sqcup Y))&=\sum_{(M,M_1,M_2)\in \mathcal{A}} M_1\otimes M_2\\
&=\sum_{(A,M_1,M_2)\in \mathcal{B}}M_1\otimes M_2\\
&=\sum_{A\in I(G)}M_{G_{\mid V(G)\setminus A}}(X)\otimes M_{G_{\mid A}}(Y).
\end{align*}
The proof in $\bfA_q(X\sqcup Y)$ is similar. \end{proof}

\begin{cor}\label{corDelta}
\begin{enumerate}
\item Let $H_{\fg}$ be the vector space generated by the set of Feynman graphs and let $q\in \K$. 
It is given a Hopf algebra structure: for any Feynman graphs $G$ and $H$,
\begin{align}
\label{E3}G._qH&=\sum_{\sigma:V(G)\supseteq A\hookrightarrow V(H)} q^{|A|}G\sqcup_\sigma H\\
\label{E4}\Delta(G)&=\sum_{A\in I(G)}G_{\mid V(G)\setminus A}\otimes G_{\mid A}.
\end{align}
\item Let $\bfH_{\fg}$ be the vector space generated by the set of ordered Feynman graphs and let $q\in \K$. 
It is given a Hopf algebra structure  by (\ref{E3})-(\ref{E4}).
\end{enumerate}\end{cor}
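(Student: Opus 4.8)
The plan is to transport the algebraic structure obtained on $H_{\fg}(X)$ and $\bfH_{\fg}(X)$ in Theorems \ref{theoprod} and \ref{theocop} to the abstract spaces $H_{\fg}$ and $\bfH_{\fg}$, and then verify the Hopf algebra axioms. First I would fix, using the last sentence of Theorem \ref{theoprod}, a quasi-ordered alphabet $X$ for which the family $(M_G(X))_{G}$ is linearly independent in $A_q(X)$; by that theorem $H_{\fg}(X)$ is then a subalgebra of $A_q(X)$ with basis $(M_G(X))_G$, so the linear isomorphism $G\mapsto M_G(X)$ from $H_{\fg}$ to $H_{\fg}(X)$ lets us \emph{define} the product $\cdot_q$ on $H_{\fg}$ by formula (\ref{E3}); associativity and (commutativity, in the unordered case) are inherited from $A_q(X)$, and the empty graph (image of $1$) is the unit. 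The same argument with $\bfA_q(X)$ and $\bfM_G(X)$ handles $\bfH_{\fg}$, using that $\bfH_{\fg}(X)$ is a subalgebra of $\bfA_q(X)$; note that $\cdot_q$ on $\bfH_{\fg}$ is not commutative, which is consistent with $\bfA_q(X)$ being noncommutative.

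Next I would install the coproduct. The subtlety is that $\Delta_{X,Y}$ lands in $A_q(X)\otimes A_q(Y)$ for \emph{two} alphabets, whereas formula (\ref{E4}) should define an intrinsic map $\Delta:H_{\fg}\to H_{\fg}\otimes H_{\fg}$. I would choose $X$ and $Y$ both equal to (an isomorphic copy of) the alphabet fixed above, large enough that simultaneously $(M_G(X))_G$, $(M_G(Y))_G$ and $(M_G(X)\otimes M_H(Y))_{G,H}$ are all linearly independent; then Theorem \ref{theocop} says precisely that under the identifications $H_{\fg}\cong H_{\fg}(X)$, $H_{\fg}\cong H_{\fg}(Y)$, the operator $\Delta_{X,Y}$ restricted to $H_{\fg}(X\sqcup Y)$ is carried to the map (\ref{E4}). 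One small point to check is that $H_{\fg}(X\sqcup Y)$ really is (isomorphic to) $H_{\fg}$ via $G\mapsto M_G(X\sqcup Y)$ and that $\Delta_{X,Y}$ maps it into $H_{\fg}(X)\otimes H_{\fg}(Y)$ — but this is exactly the content of Theorem \ref{theocop}, since the right-hand side there is a sum of $M_{(\cdot)}(X)\otimes M_{(\cdot)}(Y)$.

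The Hopf algebra axioms then follow mechanically. Coassociativity of $\Delta$ is obtained from Proposition \ref{double}: applying $(\Delta_{X,Y}\otimes Id)\circ\Delta_{X\sqcup Y,Z}=(Id\otimes\Delta_{Y,Z})\circ\Delta_{X,Y\sqcup Z}$ to $M_G(X\sqcup Y\sqcup Z)$ and using linear independence over three copies of the alphabet gives $(\Delta\otimes Id)\circ\Delta=(Id\otimes\Delta)\circ\Delta$ on $H_{\fg}$. That $\Delta$ is an algebra morphism follows because each $\Delta_{X,Y}$ is an algebra morphism and the identifications are algebra isomorphisms; concretely, one can also check directly that $I(G\sqcup_\sigma H)$ decomposes compatibly, but invoking the realization avoids this bookkeeping. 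The counit is $\varepsilon(G)=\delta_{G,\emptyset}$, corresponding to the ideals $A=V(G)$ and $A=\emptyset$ in (\ref{E4}); counitality is immediate from $G_{\mid V(G)}=G$ and $G_{\mid\emptyset}=\emptyset$. Finally, $\Delta$ is connected and graded by the number of vertices — $\cdot_q$ raises or preserves vertex count and $\Delta$ splits it — so the antipode exists automatically by the standard connected-graded argument. The ordered case $\bfH_{\fg}$ is identical, working inside $\bfA_q(X)$ and using the $\bfM$-versions of all the statements, the only difference being that $\cdot_q$ is noncommutative there. The main obstacle, such as it is, is purely bookkeeping: making sure a single alphabet can be chosen witnessing linear independence of $M_G$ over one, two, and three tensor factors simultaneously — this is harmless since one may take $X$ countably infinite with the trivial quasi-order (all elements equivalent), as in the proof of Theorem \ref{theoprod}, and the disjoint unions $X\sqcup X$, $X\sqcup X\sqcup X$ are again of this form.
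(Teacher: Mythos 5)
Your proposal is essentially the paper's own proof: transport the structure along the realization maps $\Theta_X:G\mapsto M_G(X)$, use Theorems \ref{theoprod} and \ref{theocop} as intertwining relations, deduce coassociativity from Proposition \ref{double} together with injectivity of $\Theta_X\otimes\Theta_Y\otimes\Theta_Z$, and get the antipode from connectedness. The only small imprecision is calling the bialgebra \emph{graded} by the number of vertices: for $q\neq 0$ the product $\cdot_q$ only respects the vertex-number \emph{filtration} (it is graded only when $q=0$), but since the degree-$0$ component is $\K$ the connected filtered bialgebra argument still yields the antipode, exactly as in the paper.
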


\begin{proof} 1. For any quasi-ordered alphabet $X$, there is linear map:
\[\Theta_X:\left\{\begin{array}{rcl}
H_{\fg}&\longrightarrow&H_{\fg}(X)\\
G&\longrightarrow&M_G(X).
\end{array}\right.\]
By Theorems \ref{theoprod} and \ref{theocop}, for any quasi-ordered alphabets $X$ and $Y$, for any $a,b\in H_{\fg}$:
\begin{align*}
\Theta_X(a)\Theta_X(b)&=\Theta_X(ab),\\
(\Theta_X \otimes \Theta_Y)\circ \Delta(a)&=\Delta_{X,Y}\circ \Theta_{X\sqcup Y}(a).
\end{align*}
Choosing quasi-ordered alphabets $X$, $Y$ and $Z$, such that $\Theta_X$, $\Theta_Y$ and $\Theta_Z$
(Proposition \ref{theoprod}), we obtain that $H_\fg$ is an algebra. For all $a,b\in H_\fg$:
\begin{align*}
(\Theta_X \otimes \Theta_Y)\circ \Delta(a._q b)&=\Delta_{X,Y}\circ \Theta_{X\sqcup Y}(ab)\\
&=\Delta_{X,Y}(\Theta_{X\sqcup Y}(a)\Theta_{X\sqcup Y}(b))\\
&=\Delta_{X,Y}\circ \Theta_{X\sqcup Y}(a)\Delta_{X,Y} \circ \Theta_{X\sqcup Y}(b)\\
&=(\Theta_X\otimes \Theta_Y)\circ \Delta(a)(\Theta_X\otimes \Theta_Y)\circ \Delta(b)\\
&=(\Theta_X \otimes \Theta_Y)(\Delta(a)\Delta(b)).
\end{align*}
As $\Theta_X$ and $\Theta_Y$ are injective, $\Theta_X\otimes \Theta_Y$ is too, so $\Delta(ab)=\Delta(a)\Delta(b)$. Moreover:
\begin{align*}
(\Theta_X \otimes \Theta_Y \otimes \Theta_Z)\circ (\Delta\otimes Id)\circ \Delta
&=(\Delta_{X,Y}\otimes Id)\circ \Delta_{X\sqcup Y,Z}\circ \Theta_{X\sqcup Y\sqcup Z}\\
&=(Id \otimes \Delta_{Y,Z})\circ \Delta_{X,Y\sqcup Z}\circ \Theta_{X\sqcup Y\sqcup Z}\\
&=(\Theta_X \otimes \Theta_Y \otimes \Theta_Z)\circ (Id \otimes \Delta)\circ \Delta.
\end{align*}
By the injectivity of $\Theta_X \otimes \Theta_Y \otimes \Theta_Z$, $\Delta$ is coassociative, so $H_{\fg}$ is a bialgebra.\\

Observe that $(H_{\fg},m,\Delta)$ is filtered by the cardinality of the set of vertices of Feynman graphs, even graded if $q=0$;
as its components of degree $0$ is reduced to $\K$, it is connected. Hence, it is a Hopf algebra. \\

2. Similar proof. \end{proof}

\subsection{The second coproduct}

\begin{defi}\label{defiequi}
Let $G$ be a Feynman graph, and let $\sim$ be an equivalence on $V(G)$.
\begin{enumerate}
\item We shall say that $\sim$ is  $G$-compatible if the following assertions hold:
\begin{itemize}
\item If $A\subseteq V(G)$ is an equivalence class of $\sim$, then $G_{\mid A}$ is a connected Feynman graph.
\item If there is a path $x_1\rightarrow x_2 \rightarrow\ldots \rightarrow x_k$ in $G$, with $x_1\sim x_k$, then $x_1\sim x_2\sim \ldots \sim x_k$.
\end{itemize}
The set of $G$-compatible equivalences  will be denoted by $CE(G)$. 
\item If $A_1,\ldots,A_k$ are the equivalence classes of $\sim$, we denote by $G_{\mid \sim}$ the disjoint union of the Feynman graphs
$G_{\mid A_i}$, $1\leq i\leq k$.
\item We denote by $G/\sim$ the following Feynman graph:
\begin{itemize}
\item The set of vertices of $G/\sim$ is $V(G)$.
\item The half-edges of $G/\sim$ are the half-edges of $G$ such that $i(e)=e$ or not $s(e)\sim s(i(e))$.
\item For any half-edge $e$ of $G/\sim$:
\begin{align*}
type_{G/\sim}(e)&=type_G(e),\\
s_{G/\sim}(e)&=s_G(e),\\
i_{G/\sim}(e)&=\begin{cases}
i_G(e)\mbox{ if not } s(e)\sim s(i(e)),\\
0\mbox{ otherwise}.
\end{cases}
\end{align*}\end{itemize}\end{enumerate}\end{defi}

Roughly speaking, $G_{\mid \sim}$ is obtained by the deletion in $G$ of all the internal edges whose two extremities are not $\sim$-equivalent,
whereas $G/\sim$ is obtained the deletion in $G$ of all the internal edges whose two extremities are $\sim$-equivalent.

\begin{remark} \begin{enumerate}
\item If $G$ is ordered, then, as $V(G_{\mid \sim})=V(G/\sim)=V(G)$, the Feynman graphs $G_{\mid \sim}$ and $G/\sim$ are also ordered.
\item The Feynman graph $G/\sim$ is not the usual contraction of $G$ according to a subgraph used by Connes and Kreimer
to define a coproduct on Feynman graphs \cite{CK1,CK2,CK3,CK4,CK5,CK6,Manchon}, as here all the vertices are conserved.
\end{enumerate} \end{remark}

\begin{notation} Let $q\in \K$. For any Feynman graph $G$, if $G_1,\ldots,G_k$ are its connected components, we put:
\[\psi_q(G)=G_1\cdot_q \ldots \cdots_q G_k\in A_q(G).\]
In particular, $\psi_0(G)=G$.
\end{notation}

\begin{prop} Let $X$, $Y$ be two quasi-ordered alphabets. 
\begin{enumerate}
\item We assume that $q=q_1q_2$. We consider $\delta_{X,Y}:A_q(XY)\longrightarrow A_{q_1}(X)\otimes A_{q_2}(Y)$. For any Feynman graph $G$:
\begin{align*}
\delta_{X,Y}(M_G(XY))&=\sum_{\sim \in CE(G)} M_{\psi_{q_1}(G/\sim)}(X)\otimes M_{\psi_{q_2}(G_{\mid \sim})}(Y).
\end{align*}
\item We consider $\delta_{X,Y}:\bfA_0(XY)\longrightarrow \bfA_0(X)\otimes \bfA_0(Y)$.
For any ordered Feynman graph $G$, in $\bfA_0(X)$:
\begin{align*}
\delta_{X,Y}(\bfM_G(XY))&=\sum_{\sim \in CE(G)} \bfM_{G/\sim}(X)\otimes \bfM_{G_{\mid \sim}}(Y).
\end{align*}
\end{enumerate} \end{prop}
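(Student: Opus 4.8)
The plan is to imitate the proof of Theorem \ref{theocop}. Since $\delta_{X,Y}$ is a continuous algebra morphism and $M_G(XY)$ is a (topological) sum of admissible monomials, it suffices to describe $\delta_{X,Y}(M)$ for each admissible monomial $M$ of $A_q(XY)$ with $fg(M)\approx G$ and then to reorganize the resulting sum; part~2 will follow by specializing $q=q_1=q_2=0$.

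First I would compute $\delta_{X,Y}(M)$ on generators. Writing the vertex-labelling of $fg(M)\approx G$ as an injection $\tau=(\tau_1,\tau_2):V(G)\hookrightarrow X\times Y$, formula \eqref{E2} shows that each vertex $v$ contributes $x_{\tau_1(v)}\otimes x_{\tau_2(v)}$, each external half-edge at $v$ contributes its split (e.g.\ $x_{-\infty,\tau_1(v)}\otimes x_{-\infty,\tau_2(v)}$, resp.\ for outgoing ones), and each internal edge $v\to w$ of $G$ contributes $x_{\tau_1(v),\tau_1(w)}\otimes x_{\tau_2(v),\infty}x_{-\infty,\tau_2(w)}$ when $\tau_1(v)<_X\tau_1(w)$ and $1\otimes x_{\tau_2(v),\tau_2(w)}$ when $\tau_1(v)\sim_X\tau_1(w)$ (these being the only possibilities, since an edge forces $\tau(v)\leq_{XY}\tau(w)$). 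Multiplying and applying the relations $x_i^2=q_ix_i$, one obtains $\delta_{X,Y}(M)=q_1^{a}q_2^{b}\,M_1\otimes M_2$ with $M_1,M_2$ admissible monomials of $A_{q_1}(X)$ and $A_{q_2}(Y)$, $a=|V(G)|-|\tau_1(V(G))|$, $b=|V(G)|-|\tau_2(V(G))|$; and, letting $\sim_M$ be the equivalence on $V(G)$ generated by the internal edges $v\to w$ with $\tau_1(v)\sim_X\tau_1(w)$, one reads off that $fg(M_1)$ is the image of $G/\!\sim_M$ under (the map induced by) $\tau_1$ and $fg(M_2)$ the image of $G_{\mid\sim_M}$ under $\tau_2$.

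Next I would check $\sim_M\in CE(G)$: the path condition holds because along a directed path of $G$ the $\tau_1$-images are $\leq_X$-increasing, so $\sim_X$-equal endpoints force all intermediate images $\sim_X$-equal and hence every edge of the path to be of the generating type; and $G_{\mid A}$ is connected for each class $A$ of $\sim_M$ because $A$ is by construction spanned by such edges and every internal edge of $G$ with both ends in $A$ is, by the same remark, of this type. Then, fixing $\sim\in CE(G)$, I would collect all $M$ with $\sim_M=\sim$: the connected components of $G/\!\sim$ and the classes of $\sim$ (which by $G$-compatibility are the connected components of $G_{\mid\sim}$) partition $V(G)$, and via the description above the labellings $\tau$ with $\sim_M=\sim$ correspond exactly to the pairs of labellings producing, respectively, a monomial of $M_{\psi_{q_1}(G/\sim)}(X)$ and a monomial of $M_{\psi_{q_2}(G_{\mid\sim})}(Y)$ — here one uses the product formula of Theorem \ref{theoprod} to identify $M_{\psi_{q_1}(G/\sim)}(X)$ with the product of the $M_{C}(X)$ over components $C$ of $G/\!\sim$, and likewise for $Y$. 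Summing over $\sim\in CE(G)$ gives part~1. For part~2, $q=q_1=q_2=0$ forces $\tau_1$ and $\tau_2$ to be injective (so $a=b=0$) and $\psi_0=\mathrm{Id}$, and the argument collapses to an explicit bijection between admissible monomials $M$ with $fg(M)\approx G$ and triples $(\sim,M_1,M_2)$ with $\sim\in CE(G)$, $fg(M_1)\approx G/\!\sim$, $fg(M_2)\approx G_{\mid\sim}$, exactly as in Theorem \ref{theocop}.

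The delicate step — the one I expect to take the most care — is this reorganization: besides verifying that $\sim_M$ really lands in $CE(G)$ and that each term $M_{\psi_{q_1}(G/\sim)}(X)\otimes M_{\psi_{q_2}(G_{\mid\sim})}(Y)$ is hit exactly once and in full, one must match the powers of $q_1$ and $q_2$ produced by the relations $x_i^2=q_ix_i$ in the two tensor factors with those generated by the iterated products $\cdot_{q_1}$ and $\cdot_{q_2}$ hidden inside $\psi_{q_1}$ and $\psi_{q_2}$; the hypothesis $q=q_1q_2$ is precisely what lets the scalar attached to $M$ in $A_q(XY)$ split consistently between the $X$- and $Y$-sides. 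By comparison, the computation of $\delta_{X,Y}$ on a single monomial and the passage to the noncommutative algebras $\bfA_0$ are routine.
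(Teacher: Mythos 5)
Your strategy is the same as the paper's: expand $M_G(XY)$ into admissible monomials, apply (\ref{E2}) to each, attach to a monomial the equivalence generated by the internal edges whose first coordinates are $\sim_X$-equivalent (this is exactly the refined equivalence $\sim'$ appearing in the paper's proof), check it belongs to $CE(G)$, and regroup; your verification that $\sim_M\in CE(G)$ and your computation of $\delta_{X,Y}$ on one monomial, including the powers $q_1^aq_2^b$ coming from non-injective $\tau_1,\tau_2$, are correct and in fact more explicit than the paper's.

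The gap is the step you single out and then leave unproved: the claim that, for fixed $\sim\in CE(G)$, the labellings with $\sim_M=\sim$ ``correspond exactly'' to pairs of monomials of $M_{\psi_{q_1}(G/\sim)}(X)$ and $M_{\psi_{q_2}(G_{\mid\sim})}(Y)$. This is not a routine bookkeeping of powers of $q_1,q_2$: the map $\tau_1$ may identify two vertices lying in the \emph{same} connected component of $G/\!\sim$ (and $\tau_2$ two vertices in the same class of $\sim$), whereas $\psi_{q_1}$, being the iterated product $\cdot_{q_1}$ of the connected components, only accounts for identifications \emph{across} components. Concretely, let $G$ have vertices $v,u,w$, internal edges $v\to u$ and $w\to u$, no external half-edges; pick $x_1<_Xu_1$ in $X$ and distinct $y_1,y_2,y_3\in Y$. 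The admissible monomial $M=x_{(x_1,y_1)}x_{(x_1,y_2)}x_{(u_1,y_3)}x_{(x_1,y_1),(u_1,y_3)}x_{(x_1,y_2),(u_1,y_3)}$ occurs in $M_G(XY)$, and $\delta_{X,Y}(M)=q_1\,\bigl(x_{x_1}x_{u_1}x_{x_1,u_1}^2\bigr)\otimes\bigl(x_{y_1}x_{y_2}x_{y_3}x_{y_1,\infty}x_{y_2,\infty}x_{-\infty,y_3}^2\bigr)$. Here $\sim_M$ is the discrete equivalence, $G/\!\sim_M=G$ is connected, so $\psi_{q_1}(G/\!\sim_M)=G$, and the left factor (two vertices joined by a double edge) is not a monomial of $M_G(X)$; no other element of $CE(G)$ can supply this term either, since the right factor (three isolated vertices with those external legs) only occurs for the discrete equivalence. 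So for $q_1\neq 0$ this term of the left-hand side is not produced by the right-hand side in the way you describe, and your asserted exact correspondence breaks down (a symmetric failure occurs on the $Y$-side when $\sim_X$ has nontrivial classes and $q_2\neq 0$). When $q_1=q_2=0$ the offending labellings are killed by $x_i^2=0$, $\tau_1$ and $\tau_2$ are forced to be injective on the relevant pieces, and your bijection argument does close exactly as in Theorem \ref{theocop}; this covers part 2 and the case $q_1=q_2=0$ of part 1, but not the general case. To be fair, the paper's own proof is no more complete here: it writes $fg(M_1)=G/\!\sim$, which already presupposes $\tau_1$ injective, and settles the regrouping with ``it is not difficult to see''; but the missing ingredient in your proposal is precisely an argument excluding, or correctly accounting for, identifications of vertices within one connected component of $G/\!\sim$ or of $G_{\mid\sim}$, and the example above shows this cannot be done under the statement's present form, so either additional hypotheses or a modified right-hand side is required.
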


\begin{proof} 1. Let $M$ be a monomial of $A_q(XY)$, such that $fg(M)=G$. The set of vertices of $fg(M)$ is denoted by 
$V(G)=\{(i_1,j_1),\ldots, (i_k,j_k)\}$. We define an equivalence on $V(G)$ by:
\[(i_p,j_p)\sim (i_q,j_q)\mbox{ if }i_p\sim_X i_q.\]
Let us assume that there is a path $(i_{p_1},j_{p_1})\rightarrow\ldots \rightarrow (i_{p_k},j_{p_k})$ in $G$, with $i_{p_1}\sim_X j_{p_1}$.
Then, as $G=fg(M)$, in $XY$:
\[(i_{p_1},j_{p_1})\leq_{XY}\ldots \leq_{XY}(i_{p_k},j_{p_k}),\]
so, in $X$:
\[i_{p_1}\leq_X \ldots \leq_X i_{p_k}.\]
As $i_{p_1}\sim_X i_{p_k}$, we obtain that $i_{p_1}\sim_X \ldots \sim_X i_{p_k}$, and finally:
\[(i_{p_1},j_{p_1})\sim \ldots \sim(i_{p_k},j_{p_k}).\]
Moreover, $\delta_{X,Y}(M)=M_1\otimes M_2$, with, by Definition of $\delta_{X,Y}$, $fg(M_1)=G/\sim$ and $fg(M_2)=G_{\mid \sim}$.
Let $\sim'$ be the equivalence which equivalent classes are the connected components of $G_{\mid \sim}$.
Then $\sim' \in CE(G)$ and:
\begin{align}
\label{EQ1}G_{\mid \sim}&=G_{\mid \sim'},&G/\sim&=G/\sim'.
\end{align}
Moreover, $\sim'$ is the unique element of $CE(G)$ such that (\ref{EQ1}) holds. Finally, we proved that $\delta_{X,Y}$ is a sum of terms
$M_1\otimes M_2$, such that there exists $\sim'\in CE(G)$, such that $fg(M_1)$ is a monomial of $\psi_{q_1}(G/\sim')$ and 
$fg(M_2)$ is a monomial of $\psi_{q_2}(G_{\mid \sim'})$. It is not difficult to see
that all these terms are obtained. \\

2. Similar proof. \end{proof}

\begin{cor}
\begin{enumerate}
\item We define  two coproducts on $H_{\fg}$ for any Feynman graph $G$ by:
\begin{align}
\label{E5}\delta(G)&=\sum_{\sim \in CE(G)}  (G/\sim)\otimes G_{\mid \sim},\\
\nonumber \delta_1(G)&=\sum_{\sim \in CE(G)}  \psi_1(G/\sim)\otimes \psi_1(G_{\mid \sim}).
\end{align}
Then $(H_{\fg},._0,\delta)$ and $(H_{\fg},._1,\delta_1)$ are bialgebras. For any $q_1,q_2 \in \K$, 
we define a coaction by:
\begin{align*}
\rho_{q_1,q_2}(G)&=\sum_{\sim \in CE(G)}  \psi_{q_1}(G/\sim)\otimes \psi_{q_2}(G_{\mid \sim}).
\end{align*}
If $(q_1,q_2)=(q,1)$ with $q\in \K$, or $(0,0)$, then $(H_{\fg},._{q_1},\Delta)$ is a bialgebra in the category 
of right $(H_{\fg},._{q_2},\delta_{q_2})$-comodules by the coaction $\rho_{q_1,q_2}$, that is to say:
\begin{itemize}
\item For all $x,y \in H_{\fg}$, $\rho_{q_1,q_2}(x._{q_1} y)=\rho_{q_1,q_2}(x)._{q_1,q_2} \rho_{q_1,q_2}(y)$, where:
\[._{q_1,q_2}:\left\{\begin{array}{rcl}
H_{\fg}^{\otimes 4}&\longrightarrow&H_{\fg}^{\otimes 2}\\
x\otimes y\otimes z\otimes t&\longrightarrow&x._{q_1} z\otimes y._{q_2} z.
\end{array}\right.\]
\item $\rho_{q_1,q_2}(1)=1\otimes 1$.
\item $(\Delta \otimes Id)\circ \rho_{q_1,q_2}=
m_{1,3,24}\circ (\rho_{q_1,q_2} \otimes \rho_{q_1,q_2}) \circ \Delta$, where:
\[m_{1,3,24}:\left\{\begin{array}{rcl}
H_{\fg}^{\otimes 4}&\longrightarrow&H_{\fg}^{\otimes 3}\\
x\otimes y\otimes z\otimes t&\longrightarrow&x\otimes z\otimes y._{q_2} t.
\end{array}\right.\]
\item For all $x\in H_{\fg}$, $(\varepsilon_\Delta \otimes Id)\circ \rho_{q_1,q_2}(x)=\varepsilon_\Delta(x)1$.
\end{itemize}
Note that $\rho_{0,0}=\delta$. 
\item Similarly, (\ref{E5}) defines a coproduct $\delta$ on $\bfA_q(0)$, making it a bialgebra on the category of right $(H_{\fg},._0,\delta)$-comodules.
\end{enumerate}

makes 
of $(H_{\fg},._0,\delta)$-comodules, \end{cor}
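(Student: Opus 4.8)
The plan is to deduce the corollary from the preceding proposition by transporting identities along the realization maps $\Theta_X:G\mapsto M_G(X)$ (and $\boldsymbol\Theta_X:G\mapsto\bfM_G(X)$), exactly as was done for the first coproduct in the proof of Corollary \ref{corDelta}. First I would fix quasi-ordered alphabets $X,Y,Z$ large enough that $\Theta_X,\Theta_Y,\Theta_Z$ and all the relevant tensor products $\Theta_X\otimes\Theta_Y$, $\Theta_X\otimes\Theta_Y\otimes\Theta_Z$ are injective; such alphabets exist by Theorem \ref{theoprod}. The preceding proposition, specialized to $q_1=q_2=0$ (so $q=0$), reads $(\Theta_X\otimes\Theta_Y)\circ\delta=\delta_{X,Y}\circ\Theta_{XY}$, and specialized to $q_1=q_2=1$ it reads $(\Theta_X\otimes\Theta_Y)\circ\delta_1=\delta_{X,Y}\circ\Theta_{XY}$ with the algebras $A_1(-)$; in general it gives $(\Theta_X\otimes\Theta_Y)\circ\rho_{q_1,q_2}=\delta_{X,Y}\circ\Theta_{XY}$ with $\delta_{X,Y}:A_{q_1q_2}(XY)\to A_{q_1}(X)\otimes A_{q_2}(Y)$.

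Next I would establish that $\delta$ (resp.\ $\delta_1$) is a coproduct making $(H_{\fg},\cdot_0,\delta)$ (resp.\ $(H_{\fg},\cdot_1,\delta_1)$) a bialgebra. Coassociativity of $\delta$ follows by applying the injective map $\Theta_X\otimes\Theta_Y\otimes\Theta_Z$ to $(\delta\otimes Id)\circ\delta$ and $(Id\otimes\delta)\circ\delta$, using the first commuting diagram of the Lemma (squaring associativity, $(XY)Z=X(YZ)$) together with the proposition; multiplicativity $\delta(x\cdot_0 y)=\delta(x)\cdot_0{}^{\otimes 2}\delta(y)$ follows since $\delta_{X,Y}$ is an algebra morphism and $\Theta_X$ intertwines $\cdot_0$ with multiplication in $A_0(X)$ (Theorem \ref{theoprod}, $q=0$ case). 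The counit is the map sending a Feynman graph to $1$ if it has no internal edges (equivalently, to the coefficient of the monomial with all $\alpha_{i,j}=0$), which corresponds under $\Theta_X$ to the natural augmentation; one checks the counit axioms on generators using that the discrete equivalence and the total equivalence are always $G$-compatible. The same argument with $A_1(-)$ in place of $A_0(-)$ handles $\delta_1$ and $\cdot_1$.

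Then I would verify the cointeraction axioms for $\rho_{q_1,q_2}$ when $(q_1,q_2)=(q,1)$ or $(0,0)$. The key input is Lemma \ref{propcoaction}, whose commuting square — built from the distributivity $(X\sqcup Y)Z=(XZ)\sqcup(YZ)$ — exactly encodes, after pulling back along the injective $\Theta_X\otimes\Theta_Y\otimes\Theta_Z$ composed with $\Theta_{(X\sqcup Y)Z}$, the identity $(\Delta\otimes Id)\circ\rho_{q_1,q_2}=m_{1,3,24}\circ(\rho_{q_1,q_2}\otimes\rho_{q_1,q_2})\circ\Delta$; here the constraint $(q_1,q_2)\in\{(q,1),(0,0)\}$ is precisely what is needed for $\delta_{X,Y}:A_{q_1q_2}(XY)\to A_{q_1}(X)\otimes A_{q_2}(Y)$ to exist (the proposition requires $q_1q_2=q_1$, i.e.\ $q_2=1$, or the degenerate $q_1q_2=0=q_1$), and it also matches the condition that $(H_{\fg},\cdot_{q_2},\delta_{q_2})$ with $q_2\in\{0,1\}$ is genuinely a bialgebra. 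Multiplicativity $\rho_{q_1,q_2}(x\cdot_{q_1}y)=\rho_{q_1,q_2}(x)\cdot_{q_1,q_2}\rho_{q_1,q_2}(y)$ and $\rho_{q_1,q_2}(1)=1\otimes 1$ come again from $\delta_{X,Y}$ being an algebra morphism; the counit compatibility $(\varepsilon_\Delta\otimes Id)\circ\rho_{q_1,q_2}=\varepsilon_\Delta(\cdot)1$ is checked directly on a graph $G$, noting that among the $\sim\in CE(G)$ only the discrete one gives $G/\sim$ with no edges, and for that one $G_{\mid\sim}=G$.

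Finally, part 2 is the same argument carried out in the noncommutative algebras $\bfA_0(-)$, using the second commuting diagram of the Lemma and the part 2 formula of the preceding proposition; note that here one only gets the $q=0$ coproduct $\delta$ on $\bfH_{\fg}$ and the comodule structure over $(H_{\fg},\cdot_0,\delta)$ (not over the ordered version), because, as the Remark after Lemma \ref{propcoaction} points out, the relevant square only commutes after composing with the surjection $\bfA_0(Y)\to A_0(Y)$. I expect the main obstacle to be purely bookkeeping: keeping straight which scalar ($q_1$, $q_2$, or $q_1q_2$) decorates which algebra in each slot, so that every application of the proposition and of Lemmas on squaring/distributivity is legitimate, and checking that the equivalence $\sim'$ produced in the proposition's proof makes the counit and coassociativity identities come out with the stated combinatorics (the map $\sim\mapsto(G/\sim,G_{\mid\sim})$ is not injective, but factors through $CE(G)$, which is what makes the formulas well-posed).
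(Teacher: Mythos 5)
Your proposal follows essentially the same route as the paper: transport everything along the realization maps $\Theta_X$, use the injectivity guaranteed by Theorem \ref{theoprod}, deduce the bialgebra statements as in Corollary \ref{corDelta}, and obtain the cointeraction identity $(\Delta\otimes Id)\circ\rho_{q_1,q_2}=m_{1,3,24}\circ(\rho_{q_1,q_2}\otimes\rho_{q_1,q_2})\circ\Delta$ by pulling back the commuting square of Lemma \ref{propcoaction} (distributivity $(X\sqcup Y)Z=(XZ)\sqcup(YZ)$), with the constraint $q_1q_2=q_1$ explaining the restriction to $(q,1)$ or $(0,0)$; this is exactly the paper's argument. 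Two harmless slips in your side remarks: the ``total equivalence'' is not always $G$-compatible (the equivalence relevant to the left counit of $\delta$ is the one whose classes are the connected components, for which $G/\sim$ is edgeless and $G_{\mid\sim}=G$), and the axiom $(\varepsilon_\Delta\otimes Id)\circ\rho_{q_1,q_2}(x)=\varepsilon_\Delta(x)1$ holds simply because $\varepsilon_\Delta$ kills every nonempty graph and $G/\sim$ has the same vertex set as $G$ --- your ``no edges'' reasoning pertains to the counit of $\delta$, not to $\varepsilon_\Delta$.
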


\begin{proof} We take $(q_1,q_2)=(q,1)$ or $(0,0)$. Hence, $q_1q_2=q_1$. 
For any quasi-ordered alphabets $X$, $Y$, $(\Theta_X\otimes \Theta_Y)\circ \delta=\delta_{X,Y}\circ \Theta_{XY}$.
The proof that $(H_{\fg},._0,\delta)$, $(\bfH_{\fg},._0,\delta)$, $(H_{\fg},._1,\delta_1)$ 
and $(\bfH_{\fg},._1,\delta_1)$ are bialgebras is similar to the proof of corollary \ref{corDelta}.
The only non-trivial remaining assertion to prove is point 3. Let us take $X,Y,Z$ alphabets such that $\Theta_X$, $\Theta_Y$ and $\Theta_Z$ are injective.
\begin{align*}
(\Theta_X\otimes \Theta_Y\otimes \Theta_Z)\circ (\Delta \otimes Id)\circ \delta
&=(\Delta_{X,Y}\otimes Id)\circ \delta_{X\sqcup Y,Z}\circ \Theta_{XYZ}\\
&=m_{1,3,24}\circ (\delta_{X,Z} \otimes \delta_{Y,Z}) \circ \Delta_{XZ,YZ} \circ \Theta_{XYZ}\\
&=(\Theta_X\otimes \Theta_Y\otimes \Theta_Z) \circ m_{1,3,24}\circ (\delta \otimes \delta) \circ \Delta.
\end{align*}
We conclude by the injectivity of $\Theta_X\otimes \Theta_Y\otimes \Theta_Z$. \end{proof}

\subsection{Restriction to ordered alphabets}

Let $G$ be a Feynman graph. A \emph{cycle} in $G$ is a sequence of vertices $(i_1,\ldots,i_k)$, with $k\geq 2$, all distinct,
such that there exist internal edges $e_1,\ldots,e_k$, with:
\[\xymatrix{\rond{i_1}\ar[r]^{e_1}&\rond{i_2}\ar[r]^{e_2}&\ldots \ar[r]^{e_{k-1}}&\rond{i_k}\ar[r]^{e_k}&\rond{i_1}}\]

\begin{lemma}\label{graphessanscycles}
Let $G$ be a Feynman graph. The following conditions are equivalent:
\begin{enumerate}
\item For any ordered alphabet, $M_G(X)=0$.
\item $G$ has a cycle.
\end{enumerate}
Moreover, there exists an ordered alphabet $X$, such that for any Feynman graph with no cycle, $M_G(X)\neq 0$.
\end{lemma}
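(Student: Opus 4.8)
The plan is to prove the equivalence by analyzing when an admissible monomial $M$ with $fg(M)\approx G$ can exist over an \emph{ordered} alphabet $X$. Recall that if $M=\prod x_i^{\epsilon_i}\prod x_{i,j}^{\alpha_{i,j}}\cdots$ is admissible with $fg(M)\approx G$, then the factor $x_{i,j}$ with $i\leq_X j$ records an internal edge $\xymatrix{\rond{i}\ar[r]&\rond{j}}$; thus every internal edge of $G$, say from vertex $u$ to vertex $v$ (identified with elements of $X$ via the labelling), forces $u\leq_X v$. First I would establish the implication \emph{(2)$\Rightarrow$(1)}: suppose $G$ has a cycle $\xymatrix{\rond{i_1}\ar[r]&\rond{i_2}\ar[r]&\cdots\ar[r]&\rond{i_k}\ar[r]&\rond{i_1}}$. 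Any admissible $M$ over an ordered $X$ with $fg(M)\approx G$ would require, after transporting along the isomorphism $fg(M)\approx G$, a chain $i_1\leq_X i_2\leq_X\cdots\leq_X i_k\leq_X i_1$; by transitivity all these are $\sim_X$-equivalent, and since $X$ is \emph{ordered} (so $\sim_X$ is equality) we get $i_1=i_2=\cdots=i_k$, contradicting that the $i_\ell$ are distinct vertices. Hence there is no such monomial, $M_G(X)=0$ for every ordered $X$.

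Next I would prove \emph{(1)$\Rightarrow$(2)} by contraposition: assume $G$ has \emph{no} cycle, and exhibit an ordered $X$ (independently of $G$, for the final assertion) with $M_G(X)\neq 0$. Take $X=\mathbb{Z}$ (or any infinite totally ordered set; I would actually take $X$ to be a countable dense/unbounded total order, but $\mathbb{Z}$ with its usual order suffices). The acyclicity of $G$ means the relation ``there is an internal edge from $u$ to $v$'' generates a partial order (no directed cycles, by the cycle definition with $k\geq 2$; note $k=1$ loops are impossible in a Feynman graph since $i_G(e)\neq e\Rightarrow type_G(e)\neq type_G(i_G(e))$, and in any case loops would already have been removed — actually for the monomial we only need that the edge relation has no directed cycle). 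Then I can choose a topological sort, i.e.\ an injection $\tau:V(G)\hookrightarrow\mathbb{Z}$ that is strictly increasing along every internal edge: if there is an edge $u\to v$ then $\tau(u)<\tau(v)$ (this uses only that the edge relation is contained in a strict partial order, which in turn uses acyclicity). With such a $\tau$, set
\[
M=\prod_{i\in V(G)}x_{\tau(i)}\prod_{i,j\in V(G)}x_{\tau(i),\tau(j)}^{\alpha_{i,j}}\prod_{j}x_{-\infty,\tau(j)}^{\beta_j}\prod_i x_{\tau(i),\infty}^{\gamma_i},
\]
exactly as in the proof of Theorem \ref{theoprod}, where $\alpha_{i,j},\beta_j,\gamma_i$ count the internal and external edges of $G$. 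Since $\tau(i)<\tau(j)$ whenever $\alpha_{i,j}\geq 1$, all subscripts $x_{\tau(i),\tau(j)}$ are legitimate (we have $\tau(i)\leq_X\tau(j)$ as required), $M$ is admissible, and $fg(M)\approx G$. Therefore $M_G(X)\neq 0$, and since $X=\mathbb{Z}$ did not depend on $G$, this simultaneously proves (1)$\Rightarrow$(2) and the final ``moreover'' statement.

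The only genuinely delicate point is the construction of the strictly increasing injection $\tau$ for \emph{all} cycle-free $G$ into a \emph{single} fixed ordered alphabet $X$; this is where the hypothesis ``$X$ is infinite'' and ``no cycle'' both get used. The cleanest way is: acyclicity gives a partial order $\preceq_G$ on $V(G)$ (reflexive-transitive closure of the edge relation, which is antisymmetric precisely because there is no directed cycle); extend $\preceq_G$ to a total order on the finite set $V(G)$ by a standard linear-extension argument; then embed that finite chain into $\mathbb{Z}$ order-preservingly and injectively, which is trivial for a finite totally ordered set. Care is needed only to check that multiple parallel edges $u\to v$ cause no problem (they don't: they just contribute $\alpha_{u,v}>1$ and still only require $\tau(u)<\tau(v)$) and that external half-edges are irrelevant to the ordering constraints. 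I expect the write-up to be short, essentially a variant of the linear-independence argument already given for Theorem \ref{theoprod}, with the cycle obstruction handled by the transitivity-plus-antisymmetry observation above.
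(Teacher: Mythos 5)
Your proposal is correct and follows essentially the same route as the paper: the cycle direction uses the chain $j_1\leq_X\cdots\leq_X j_k\leq_X j_1$ forced by the edge factors and antisymmetry of an order, and the converse builds a linear extension (topological sort) of the acyclic edge relation, embeds it in a fixed infinite ordered alphabet ($\mathbb{Z}$ for you, $\N$ in the paper), and exhibits the explicit admissible monomial exactly as in the proof of Theorem \ref{theoprod}. The only difference is cosmetic: the paper obtains the linear extension by induction on $|V(G)|$ via a source vertex, while you invoke the standard linear-extension argument directly.
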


\begin{proof}
$2.\Longrightarrow 1$. Let us assume that $G$ has a cycle $(i_1,\ldots,i_k)$, with $M_G(X)\neq 0$.
There exists $j_1,\ldots,k_k \in X$, all distinct, such that 
$x_{j_1,j_2}\ldots x_{j_{k-1},j_k}x_{j_k,j_1}$ appears in $M_G(X)$, so $j_1\leq_X\ldots \leq_X j_{k-1}\leq_X j_k\leq_X j_1$.
As $\leq_X$ is an order, $j_1=\ldots=j_k$, which is a contradiction. So $M_G(X)=0$.

$1.\Longrightarrow 2$. Let us prove that there exists a total order $\leq$ on $V(G)$, such that if there is an edge from $i$ to $j$
in $G$, then $i\leq j$. We proceed by induction on $|V(G)|$. If $|V(G)|=1$, this is obvious. If $|V(G)|\geq 2$, as $G$ has no cycle,
it has a source $v_1$, that is to say a vertex with no internal incoming edge. The Feynman graph $G'$ obtained from $G$ by deleting $v_0$
and all the attached half-edges has no cycle, so, by the induction hypothesis, the set of its vertices inherits a total order 
$v_2\leq \ldots \leq v_k$, compatible with the internal edges of $G'$. We give $V(G)$ a total order by $v_1\leq v_2\leq \ldots \leq v_k$; 
as $v_1$ is a source, this order is compatible with the internal edges of $G$.\\

We take $X=\N$, with its usual order. There exists a monomial of $A_q(X)$ of the form:
\[M=x_1\ldots x_k\prod_{1\leq i\leq j\leq n} x_{i,j}^{\alpha_{i,j}} \prod_{1\leq i\leq n} x_{i,\infty}^{\beta_i} \prod_{1\leq j\leq n}x_{-\infty,j}^{\gamma_j},\]
such that $fg(M)=G$: $\alpha_{i,j}$ is the number of internal edges between between $v_i$ and $v_j$, $\beta_i$ is the number of incoming 
external edges in $v_i$, and $\gamma_j$ is the number of external half-edges outgoing from $v_j$, if $v_1\leq \ldots \leq v_k$
is the previously defined order on the set of vertices of $G$. So $M_G(X)\neq 0$. \end{proof} 

Observe that if $G$ has no cycle and $\sim\in CE(G)$, then both $G_{\mid \sim}$ and $G/\sim$
has no cycle. Hence, considering only ordered alphabets, we obtain a Hopf algebra and a bialgebra of Feynman graphs with no cycle:

\begin{theo} \label{theofgwc}
Let $H_{\ncfg}$ be the vector space generated by the set of Feynman graphs with no cycle and let $q\in \K$. 
\begin{enumerate}
\item It is given a Hopf algebra structure: for any Feynman graphs $G$ and $H$ with no cycle,
\begin{align}
\label{E6} G._qH&=\sum_{\substack{\sigma:V(G)\supseteq A\hookrightarrow V(H),\\ \mbox{\scriptsize $G\sqcup_\sigma H$ with no cycle}}}
q^{|A|} G\sqcup_\sigma H,\\
\label{E7}\Delta(G)&=\sum_{A\in I(G)}G_{\mid V(G)\setminus A}\otimes G_{\mid A}.
\end{align}
\item We define a second coproduct by:
\begin{align}
\label{E8}\delta(G)&=\sum_{\sim \in CE(G)}  (G/\sim)\otimes (G_{\mid \sim}).
\end{align}
Then $(H_{\ncfg},._0,\delta)$ is a  bialgebra. Moreover, $(H_{\ncfg},._0,\Delta)$ is a bialgebra in the category
of $(H_{\ncfg},._0,\delta)$-comodules.
\item Let us consider the map:
\[T:\left\{\begin{array}{rcl}
H_\fg&\longrightarrow&H_\ncfg\\
G&\longrightarrow&\begin{cases}
G\mbox{ if $G$ has no cycle},\\
0\mbox{ otherwise}.
\end{cases}
\end{array}\right.\]
It is a Hopf algebra morphism from $(H_\fg,._q,\Delta)$ to $(H_\ncfg,._q,\Delta)$ and from $(H_\fg,._0,\delta)$ to
$(H_\ncfg,._0,\delta)$.
\end{enumerate}\end{theo}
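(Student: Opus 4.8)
The strategy is to deduce everything from the already-established facts about $(H_\fg,._q,\Delta)$ and $(H_\fg,._0,\delta)$ (Corollary \ref{corDelta} and the preceding corollary), together with Lemma \ref{graphessanscycles} and the observation recorded just before the theorem: if $G$ has no cycle and $\sim\in CE(G)$, then $G/\sim$ and $G_{\mid\sim}$ have no cycle, and likewise $G_{\mid A}$ and $G_{\mid V(G)\setminus A}$ have no cycle for any ideal $A$. The cleanest route is to realize $H_\ncfg$ inside $A_q(X)$ for a suitably chosen \emph{ordered} alphabet and then transport the structures, exactly as in the proof of Corollary \ref{corDelta}.

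First I would fix, by the last sentence of Lemma \ref{graphessanscycles}, an ordered alphabet $X$ (e.g.\ $X=\N$) such that $M_G(X)\neq 0$ for every cycle-free $G$; by the same lemma $M_G(X)=0$ as soon as $G$ has a cycle. Since distinct cycle-free Feynman graphs have disjoint supports, the family $(M_G(X))_{G\ \mathrm{no\ cycle}}$ is linearly independent, so the linear map $\Theta_X:H_\ncfg\to A_q(X)$, $G\mapsto M_G(X)$, is injective, and similarly one can find $X,Y,Z$ ordered with $\Theta_X,\Theta_Y,\Theta_Z$ and their tensor products injective (take three disjoint copies of $\N$, say, noting that a disjoint union of ordered alphabets is ordered). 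Then, for part 1, I would read off formula (\ref{E6}) directly from Theorem \ref{theoprod}: $M_G(X)M_H(X)=\sum_\sigma q^{|A|}M_{G\sqcup_\sigma H}(X)$, and since $M_{G\sqcup_\sigma H}(X)=0$ precisely when $G\sqcup_\sigma H$ has a cycle, only the cycle-free terms survive — this shows $H_\ncfg(X)$ is a subalgebra of $A_q(X)$ and gives the product (\ref{E6}). For the coproduct, Theorem \ref{theocop} gives $\Delta_{X,Y}(M_G(X\sqcup Y))=\sum_{A\in I(G)}M_{G_{\mid V(G)\setminus A}}(X)\otimes M_{G_{\mid A}}(Y)$; all summands are cycle-free (sub-Feynman-graphs of a cycle-free graph), so this expression lies in $H_\ncfg(X)\otimes H_\ncfg(Y)$ and equals $(\Theta_X\otimes\Theta_Y)\circ\Delta(G)$ with $\Delta$ as in (\ref{E7}). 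Coassociativity, multiplicativity of $\Delta$, and the counit/connectedness/Hopf property then follow verbatim as in Corollary \ref{corDelta}, using the injectivity of the $\Theta$'s and Proposition \ref{double}; filtration by number of vertices still works, $q=0$ still gives a grading, and the degree-$0$ part is $\K$, so we get a Hopf algebra. For part 2, with $q=0$ (so $G._0H=GH$ stays cycle-free automatically), the previous corollary's formula $\delta_{X,Y}(M_G(XY))=\sum_{\sim\in CE(G)}M_{G/\sim}(X)\otimes M_{G_{\mid\sim}}(Y)$ (here $\psi_0=\mathrm{id}$) again has only cycle-free summands, so $\delta$ of (\ref{E8}) is well-defined on $H_\ncfg$; bialgebra and the cointeraction identity $(\Delta\otimes Id)\circ\delta=m_{1,3,24}\circ(\delta\otimes\delta)\circ\Delta$ are inherited from Lemma \ref{propcoaction} and the previous corollary, once more by injectivity of $\Theta_X\otimes\Theta_Y\otimes\Theta_Z$.

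For part 3, I would argue directly on the combinatorial formulas rather than through alphabets. The map $T:H_\fg\to H_\ncfg$ is the linear projection killing graphs with a cycle. To see it is an algebra morphism for $._q$: in $G._qH=\sum_\sigma q^{|A|}G\sqcup_\sigma H$, if either $G$ or $H$ already has a cycle then every $G\sqcup_\sigma H$ has that cycle (gluing vertices only adds edges, never removes a pre-existing internal edge both of whose endpoints survive), so $T(G._qH)=0=T(G)._qT(H)$; if both are cycle-free, $T(G._qH)$ keeps exactly the cycle-free gluings, which is $T(G)._qT(H)$ by (\ref{E6}). For $\Delta$: if $G$ has a cycle, I claim every term $G_{\mid V(G)\setminus A}\otimes G_{\mid A}$ has a cycle on one side — indeed the cycle $(i_1,\dots,i_k)$ has all its vertices either inside $A$ or, using that $A$ is an ideal, the first vertex of the cycle that lies in $A$ forces (following the cyclic edges) all of them into $A$; so either the whole cycle is in $A$ (cycle in $G_{\mid A}$) or none of it is (cycle in $G_{\mid V(G)\setminus A}$). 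Hence $(T\otimes T)\Delta(G)=0=\Delta(TG)$ when $G$ has a cycle, and when $G$ is cycle-free all summands survive, matching (\ref{E7}); coalgebra compatibility with the counit is immediate. The same dichotomy argument, using the second defining property of $CE(G)$ (a path with $x_1\sim x_k$ forces $x_1\sim\cdots\sim x_k$), shows that for a graph with a cycle each term $(G/\sim)\otimes(G_{\mid\sim})$ has a cycle on one side, giving compatibility with $\delta$ for $q=0$.

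The only genuinely delicate point is this cycle-localization lemma for ideals and for $G$-compatible equivalences — i.e.\ that a cycle of $G$ cannot be "split" by an ideal or by a class of a compatible equivalence. For ideals it is the ideal property applied around the cycle; for compatible equivalences it is exactly the second bullet in Definition \ref{defiequi} (a cyclic path with $\sim$-equivalent endpoints collapses to a single class), so $G_{\mid\sim}$ retains the cycle, while $G/\sim$ retains it when the cycle meets no class in more than a point. I expect writing this cleanly — handling the case where a cycle partly enters a class — to be the main obstacle; everything else is a transcription of the arguments already given for $H_\fg$.
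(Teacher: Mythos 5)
Your proposal is correct and follows essentially the same route as the paper: the paper states Theorem \ref{theofgwc} without a separate proof, precisely on the strength of Lemma \ref{graphessanscycles} and the observation that $G_{\mid\sim}$, $G/\sim$ (and restrictions to ideals) stay cycle-free, i.e.\ by restricting the polynomial realization to ordered alphabets exactly as you do in parts 1 and 2. Your direct combinatorial check of part 3 is a harmless variant (it could also be read off from $\Theta_X\circ T=\Theta_X$ for ordered $X$), and the case you flag as delicate is in fact vacuous: a cycle is a closed path whose endpoints are trivially $\sim$-equivalent, so the second condition in Definition \ref{defiequi} forces all of its vertices into one class (hence the cycle survives in $G_{\mid\sim}$), and likewise the ideal property forces a cycle to lie entirely inside or entirely outside any ideal.
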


Here is its non-commutative version:

\begin{theo}
Let $\bfH_{\ncfg}$ be the vector space generated by the set of ordered Feynman graphs with no cycle and let $q\in \K$. 
\begin{enumerate}
\item It is given a Hopf algebra structure by (\ref{E6})-(\ref{E7}).
\item We define a second coproduct by (\ref{E8}).
Then $(\bfH_{\ncfg},._0,\delta)$ is a  bialgebra. Moreover, $(\bfH_{\ncfg},._0,\Delta)$ is a bialgebra in the category
of $(H_{\ncfg},._0,\delta)$-comodules.
\item Let us consider the map:
\[T:\left\{\begin{array}{rcl}
\bfH_\fg&\longrightarrow&\bfH_\ncfg\\
G&\longrightarrow&\begin{cases}
G\mbox{ if $G$ has no cycle},\\
0\mbox{ otherwise}.
\end{cases}
\end{array}\right.\]
It is a Hopf algebra morphism from $(\bfH_\fg,._q,\Delta)$ to $(\bfH_\ncfg,._q,\Delta)$ and from $(\bfH_\fg,._0,\delta)$ to
$(\bfH_\ncfg,._0,\delta)$.
\end{enumerate}\end{theo}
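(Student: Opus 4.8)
The plan is to transcribe, for the noncommutative algebras $\bfA_q(X)$ attached to \emph{ordered} alphabets, the sequence of arguments that gives Theorem~\ref{theofgwc} and Corollary~\ref{corDelta}. The first ingredient is the noncommutative analogue of Lemma~\ref{graphessanscycles}: for any ordered alphabet $X$ and any ordered Feynman graph $G$, one has $\bfM_G(X)=0$ if and only if $G$ has a cycle, and there is an ordered alphabet (take $X=\N$ with its usual order) for which $\bfM_G(X)\neq 0$ for every ordered Feynman graph $G$ with no cycle. Its proof is the one of Lemma~\ref{graphessanscycles}: the vanishing argument only uses the factors $x_{i,j}^{\alpha_{i,j}}$ together with the antisymmetry of $\leq_X$, which do not see the noncommutativity of the $x_i$; and for the nonvanishing part one assigns to the $m$-th vertex of $G$ an integer $i_m$ chosen so that $v_a\to v_b$ forces $i_a\leq i_b$, which is possible exactly when $G$ has no cycle.

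First I would fix an ordered alphabet $X$ and set $\bfH_\ncfg(X)=Vect(\bfM_G(X)\mid G\mbox{ ordered Feynman graph with no cycle})$. Theorem~\ref{theoprod} gives, in the noncommutative case, $\bfM_G(X)\bfM_H(X)=\sum_{\sigma:V(G)\supseteq A\hookrightarrow V(H)}q^{|A|}\bfM_{G\sqcup_\sigma H}(X)$, and over an ordered alphabet the summands for which $G\sqcup_\sigma H$ has a cycle vanish, so $\bfH_\ncfg(X)$ is a subalgebra of $\bfA_q(X)$ with product~(\ref{E6}). Since $X\sqcup Y$ and $XY$ are ordered when $X$ and $Y$ are, and since $G_{\mid A}$, $G/\sim$ and $G_{\mid\sim}$ are subgraphs of $G$ and hence acyclic when $G$ is, Theorem~\ref{theocop} and the corresponding statement for $\delta_{X,Y}$ with $q=0$ restrict to $\bfH_\ncfg(X)$ and produce the maps~(\ref{E7}) and~(\ref{E8}). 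Choosing ordered alphabets $X,Y,Z$ on which $\Theta_X,\Theta_Y,\Theta_Z$ are injective and transporting the structure along the $\Theta$'s, exactly as in the proof of Corollary~\ref{corDelta}, yields point~1 (coassociativity of $\Delta$ coming from Proposition~\ref{double}) and the bialgebra $(\bfH_\ncfg,._0,\delta)$ of point~2 (coassociativity of $\delta$ coming from the corresponding lemma for $\delta_{X,Y}$ with $q_1=q_2=0$). For the cointeraction, the relevant coaction is $\rho=(Id\otimes p)\circ\delta$ with values in $\bfH_\ncfg\otimes H_\ncfg$, the second factor being the \emph{commutative} Hopf algebra of Feynman graphs with no cycle; the identity $(\Delta\otimes Id)\circ\rho=m_{1,3,24}\circ(\rho\otimes\rho)\circ\Delta$ is obtained by transporting through the $\Theta$'s the identity of the Remark following Lemma~\ref{propcoaction}, restricted to ordered alphabets.

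For point~3 I would show that $\ker T=Vect(G\mid G\mbox{ ordered Feynman graph with a cycle})$ is a Hopf ideal of $(\bfH_\fg,._q,\Delta)$ and a biideal of $(\bfH_\fg,._0,\delta)$; then $T$ is the associated quotient map and, the structure of $\bfH_\ncfg$ being the restriction of that of $\bfH_\fg$, this makes $T$ a bialgebra, hence a Hopf algebra, morphism in both cases. That $\ker T$ is a two-sided ideal: an injection $\sigma:A\hookrightarrow V(H)$ never merges two distinct vertices of $G$, so a cycle of $G$ survives in $G\sqcup_\sigma H$ and in $H\sqcup_\sigma G$, no edge of it becoming a loop. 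That $\ker T$ is a coideal for $\Delta$: an ideal $A\in I(G)$ is closed under taking targets of edges, so if a cycle of $G$ meets $A$ it lies entirely in $A$; hence for each $A\in I(G)$ the cycle lies entirely in $A$ or entirely in $V(G)\setminus A$, one of $G_{\mid A}$, $G_{\mid V(G)\setminus A}$ has a cycle, and $\Delta(G)\in\ker T\otimes\bfH_\fg+\bfH_\fg\otimes\ker T$. That $\ker T$ is a coideal for $\delta$: for $\sim\in CE(G)$, the path running once around a cycle of $G$ begins and ends at the same vertex, so the second axiom of $G$-compatibility forces all vertices of that cycle to be $\sim$-equivalent, whence the cycle survives inside $G_{\mid\sim}$ and every term of $\delta(G)$ lies in $\bfH_\fg\otimes\ker T$. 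Compatibility with units and counits is immediate.

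The step I expect to be the main obstacle is the cointeraction of point~2 in the noncommutative world: as already pointed out in the Remark after Lemma~\ref{propcoaction}, there is no morphism $\bfA_{q_1q_2}((X\sqcup Y)Z)\longrightarrow\bfA_{q_1}(X)\otimes\bfA_{q_1}(Y)\otimes\bfA_{q_2}(Z)$ implementing the naive compatibility, so one must pass to the commutative quotient in the comodule tensor factor via $\rho=(Id\otimes p)\circ\delta$ and use the form of the identity proved there. Everything else is a faithful copy of the commutative arguments, the only extra bookkeeping being the total order on vertex sets, which is preserved by $\sqcup_\sigma$, by $G\mapsto G_{\mid A}$ and by $G\mapsto G/\sim$, $G_{\mid\sim}$.
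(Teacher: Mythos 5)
Your proposal is correct and follows essentially the route the paper intends for this unproved statement: the noncommutative analogue of Lemma \ref{graphessanscycles} over ordered alphabets, the polynomial realization and transport along the maps $\Theta_X$ exactly as in Theorem \ref{theoprod}, Theorem \ref{theocop} and Corollary \ref{corDelta}, and the passage to the commutative factor via $\rho=(Id\otimes p)\circ\delta$ from the Remark after Lemma \ref{propcoaction} for the cointeraction. Your explicit check that $\ker T$ is a Hopf ideal (and a coideal for $\delta$) is a small self-contained addition that the paper leaves implicit, and it is argued correctly.
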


\begin{remark}
There is a canonical injection $\iota : H_\ncfg\longrightarrow H_\fg$. This injection $\iota$ is compatible with $._q$ if, and only if,
$q=0$: indeed, as $._0$ is the disjoint union, it is compatible with $\iota$. If $q\neq 0$, taking $G=H=\xymatrix{\rond{}\ar[r]&\rond{}}$:
\begin{itemize}
\item In $H_\fg$:
\begin{align*}
G._q H&=\xymatrix{\rond{}\ar[r]&\rond{}}\:\xymatrix{\rond{}\ar[r]&\rond{}}+q\:\xymatrix{\rond{}&\rond{}\ar[r]\ar[l]&\rond{}}
+2q\:\xymatrix{\rond{}\ar[r]&\rond{}\ar[r]&\rond{}}\\[2mm]
&+q\:\xymatrix{\rond{}\ar[r]&\rond{}&\rond{}\ar[l]}+q^2\:\xymatrix{\rond{}\ar@/_1pc/[r]\ar@/^1pc/[r]&\rond{}}
+q^2\:\xymatrix{\rond{}\ar@/_1pc/[r]&\rond{}\ar@/_1pc/[l]}.
\end{align*}
\item In $H_\ncfg$:
\begin{align*}
G._q H&=\xymatrix{\rond{}\ar[r]&\rond{}}\:\xymatrix{\rond{}\ar[r]&\rond{}}+q\:\xymatrix{\rond{}&\rond{}\ar[r]\ar[l]&\rond{}}
+2q\:\xymatrix{\rond{}\ar[r]&\rond{}\ar[r]&\rond{}}\\[2mm]
&+q\:\xymatrix{\rond{}\ar[r]&\rond{}&\rond{}\ar[l]}+q^2\:\xymatrix{\rond{}\ar@/_1pc/[r]\ar@/^1pc/[r]&\rond{}}.
\end{align*}
\end{itemize}
However, $\iota$ is compatible with $\Delta$ and $\delta$.
\end{remark}

\section{Quotients of Feynman graphs}

\subsection{Simple oriented graphs}

Let $X$ be a quasi-ordered alphabet. We consider the following quotients of $A_q(X)$ and $\bfA_q(X)$:
\begin{align*}
A'_q(X)&=\frac{A_q(X)}{\langle x_{i,\infty}-1,x_{-\infty,j}-1, x_{i,j}^2-x_{i,j},x_{i,i}-1, i,j\in X\rangle},\\
\bfA'_q(X)&=\frac{\bfA_q(X)}{\langle x_{i,\infty}-1,x_{-\infty,j}-1, x_{i,j}^2-x_{i,j},x_{i,i}-1, i,j\in X\rangle}.
\end{align*}
Elements of $A'_q[X]$ are formal spans of monomials
\[M=\prod_{i\in X} x_i^{\epsilon_i} \prod_{i\leq_X j} x_{i,j}^{\alpha_{i,j}},\]
where the $\epsilon_i,\alpha_{i,j}\in\{0,1\}$, with only a finite number of them non-zero. Elements of $\bfA'_q[X]$ are formal spans of monomials
\[M=x_{i_1}\ldots x_{i_k} \prod_{i\leq_X j} x_{i,j}^{\alpha_{i,j}},\]
where $i_1,\ldots,i_k$ are elements of $X$, all distinct, $\alpha_{i,j}\in \{0,1\}$, with only a finite number of them non-zero.
The canonical surjections from $A_q(X)$ to $A'_q(X)$ or form $\bfA_q(X)$ to $\bfA'_q(X)$ are both denoted by $\varpi'_X$.

\begin{prop}
Let $X,Y$ be two quasi-ordered alphabets, and $q,q_1,q_2 \in \K$, such that $q=q_1q_2$.
\begin{enumerate}
\item There exist unique algebra morphisms $\Delta_{X,Y}:A'_q(X\sqcup Y)\longrightarrow A'_q(X)\otimes A'_q(Y)$
and $\delta_{X,Y}:A'_q(XY)\longrightarrow A'_{q_1}(X)\otimes A'_{q_2}(Y)$, such that the following diagrams commute:
\begin{align*}
&\xymatrix{A_q(X\sqcup Y)\ar[d]_{\varpi'_{X\sqcup Y}} \ar[r]^{\Delta_{X,Y}}&A_q(X)\otimes A_q(Y)\ar[d]^{\varpi'_X\otimes \varpi'_Y}\\
A'_q(X\sqcup Y)\ar[r]_{\Delta_{X,Y}}&A'_q(X)\otimes A'_q(Y)}&
&\xymatrix{A_q(XY)\ar[d]_{\varpi'_{XY}} \ar[r]^{\delta_{X,Y}}&A_{q_1}(X)\otimes A_{q_2}(Y)\ar[d]^{\varpi'_X\otimes \varpi'_Y}\\
A'_q(XY)\ar[r]_{\delta_{X,Y}}&A'_{q_1}(X)\otimes A'_{q_2}(Y)}
\end{align*}
They are given in the following way: if $i,i'\in X$, $j,j'\in Y$,
\begin{align*}
\Delta_{X,Y}(x_i)&=x_i\otimes 1,&\Delta_{X,Y}(x_{i,i'})&=x_{i,i'}\otimes 1,\\
\Delta_{X,Y}(x_j)&=1\otimes x_j,&\Delta_{X,Y}(x_{j,j'})&=1\otimes x_{j,j'},\\
&&\Delta_{X,Y}(x_{i,j})&=1\otimes 1,&\\ \\
\delta_{X,Y}(x_{(i,j)})&=x_i\otimes x_j,&\delta_{X,Y}(x_{(i,j),(i',j')})&=\begin{cases}
x_{i,i'}\otimes 1\mbox{ if }i<_X i',\\
1\otimes x_{j,j'}\mbox{ if }i\sim_X i'.
\end{cases}
\end{align*}
\item The same assertions hold if one replaces $A_q$ and $A'_q$ by $\bfA_q$ and $\bfA'_q$ everywhere.
\end{enumerate}
\end{prop}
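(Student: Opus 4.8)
The plan is to produce $\Delta_{X,Y}$ and $\delta_{X,Y}$ on the quotients by factoring the morphisms already constructed on $A_q$ (resp.\ $\bfA_q$) through the canonical surjections $\varpi'$. For the first map I would consider the continuous algebra morphism
\[(\varpi'_X\otimes \varpi'_Y)\circ \Delta_{X,Y}\colon A_q(X\sqcup Y)\longrightarrow A'_q(X)\otimes A'_q(Y),\]
and show that it vanishes on each generator of the defining ideal of $A'_q(X\sqcup Y)$, that is on $x_{i,\infty}-1$, $x_{-\infty,j}-1$, $x_{i,j}^2-x_{i,j}$ and $x_{i,i}-1$. Since $\varpi'_{X\sqcup Y}$ is a surjective continuous algebra morphism whose kernel is exactly the ideal generated by those relations, such a vanishing gives, by the universal property of the quotient, a \emph{unique} continuous algebra morphism $A'_q(X\sqcup Y)\to A'_q(X)\otimes A'_q(Y)$ making the left-hand square commute; its formula on the generators $\varpi'_{X\sqcup Y}(x_i)$ and $\varpi'_{X\sqcup Y}(x_{i,j})$ is then read off by composing \eqref{E1} with $\varpi'_X\otimes\varpi'_Y$, and reproduces the stated expressions — in particular $x_{i,j}\mapsto 1\otimes 1$ when $i\in X$, $j\in Y$, because $x_{i,\infty}$ and $x_{-\infty,j}$ are already $1$ in $A'_q$.

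The vanishing is a short case check. By \eqref{E1} the generators $x_{i,\infty}-1$, $x_{-\infty,j}-1$ and $x_{i,i}-1$ map to expressions $\varpi'_X(z)\otimes1-1\otimes1$ (or the $Y$-analogue) with $z\in\{x_{i,\infty},x_{-\infty,i},x_{i,i}\}$, hence to $0$ since $z=1$ in $A'_q$; for $x_{i,j}^2-x_{i,j}$ with $i,j$ in the same alphabet the image is $\varpi'_X(x_{i,j}^2-x_{i,j})\otimes1=0$ by idempotency of $x_{i,j}$ in $A'_q(X)$; and in the mixed case $i\in X$, $j\in Y$ one has $\Delta_{X,Y}(x_{i,j})=x_{i,\infty}\otimes x_{-\infty,j}$, whose image under $\varpi'_X\otimes\varpi'_Y$ is $1\otimes1$, manifestly idempotent. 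The construction of $\delta_{X,Y}$ is strictly parallel: starting from $(\varpi'_X\otimes\varpi'_Y)\circ\delta_{X,Y}\colon A_q(XY)\to A'_{q_1}(X)\otimes A'_{q_2}(Y)$ — which exists precisely because $q=q_1q_2$ — one checks with \eqref{E2} that $x_{(i,i'),\infty}-1$ and $x_{-\infty,(j,j')}-1$ go to $0$; that $x_{(i,i'),(i,i')}-1$ goes to $0$, using $\delta_{X,Y}(x_{(i,i'),(i,i')})=1\otimes x_{i',i'}$ and $x_{i',i'}=1$ in $A'_{q_2}(Y)$; and that $x_{(i,i'),(j,j')}^2-x_{(i,i'),(j,j')}$ is killed, splitting on $i<_X j$ (image $x_{i,j}\otimes1$, idempotent in $A'_{q_1}(X)$) versus $i\sim_X j$ (image $1\otimes x_{i',j'}$, idempotent in $A'_{q_2}(Y)$). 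Composing \eqref{E2} with $\varpi'_X\otimes\varpi'_Y$ then yields the stated formulas for $\delta_{X,Y}$.

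For part 2 I expect the identical argument to go through verbatim: in $\bfA_q(X)$ the variables $x_{i,j}$, $x_{-\infty,j}$ and $x_{i,\infty}$ are central, so the relevant two-sided ideals are generated by central elements and every step above is unchanged, all maps remaining noncommutative algebra morphisms. I do not anticipate a genuine obstacle here; the one point requiring a little care is the bookkeeping of which indexed variables are actual generators of $A_q(X\sqcup Y)$ and $A_q(XY)$ — for instance $x_{i,j}$ with $i\in Y$, $j\in X$ is simply not a generator, since letters of $X$ precede those of $Y$ in $X\sqcup Y$ — which is what makes the two case analyses exhaustive rather than missing a subcase.
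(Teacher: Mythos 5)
Your proposal is correct and is essentially the argument the paper has in mind: the paper's proof is just ``Immediate verifications,'' and your factoring of $(\varpi'_X\otimes\varpi'_Y)\circ\Delta_{X,Y}$ and $(\varpi'_X\otimes\varpi'_Y)\circ\delta_{X,Y}$ through $\varpi'_{X\sqcup Y}$, resp.\ $\varpi'_{XY}$, with the case-by-case check that the ideal generators are killed (including the mixed case $x_{i,j}\mapsto x_{i,\infty}\otimes x_{-\infty,j}\mapsto 1\otimes 1$ and the centrality remark for the noncommutative setting) is exactly those verifications carried out.
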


\begin{proof} Immediate verifications. \end{proof}

\begin{defi}
Let $G$ be a Feynman graph. We denote by $S(G)$ the simple oriented graph obtained by the following procedure:
\begin{enumerate}
\item Delete all the external edges of $G$.
\item Delete the loops, that is to say internal edges with two identical extremities.
\item If $G$ has several edges from $i$ to $j$, where $i,j\in V(G)$, keep only one edge from $i$ to $j$ in $S(G)$.
\end{enumerate}\end{defi}

\begin{lemma}
Let $G$ and $H$ be two Feynman graphs. The following conditions are equivalent:
\begin{enumerate}
\item $\varpi'_X(M_G(X))=\varpi'_X(M_H(X))$ for any quasi-ordered alphabet $X$.
\item $S(G)=S(H)$.
\end{enumerate}\end{lemma}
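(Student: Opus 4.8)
The plan is to establish the two implications separately, each by elementary bookkeeping on monomials in the spirit of the proofs of Theorem~\ref{theocop} and Lemma~\ref{graphessanscycles}, together with a separating alphabet for the converse. Everything rests on first computing the effect of $\varpi'_X$ on a single admissible monomial. If $M=\prod_i x_i^{\epsilon_i}\prod_{i\leq_X j}x_{i,j}^{\alpha_{i,j}}\prod_j x_{-\infty,j}^{\beta_j}\prod_i x_{i,+\infty}^{\gamma_i}$ is an admissible monomial of $A_q(X)$, then the relations $x_{i,\infty}=x_{-\infty,j}=x_{i,i}=1$ and $x_{i,j}^2=x_{i,j}$ defining $A'_q(X)$ give
\[\varpi'_X(M)=\prod_{i:\,\epsilon_i=1}x_i\ \prod_{\substack{i\neq j\\ \alpha_{i,j}\geq 1}}x_{i,j}.\]
Thus $\varpi'_X(M)$ depends only on the vertex set $\{i:\epsilon_i=1\}$ of $fg(M)$ and on the set of ordered pairs $(i,j)$, $i\neq j$, for which $fg(M)$ has at least one edge $i\to j$; but this is exactly the data of the simple oriented graph $S(fg(M))$ together with its vertices in $X$, and conversely every admissible monomial of $A'_q(X)$ arises in this way. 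Hence, for admissible monomials $M,M'$, one has $\varpi'_X(M)=\varpi'_X(M')$ if and only if $S(fg(M))$ and $S(fg(M'))$ coincide as graphs with vertices in $X$.

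For $2\Rightarrow 1$, group the defining sum $M_G(X)=\sum_{fg(M)\approx G}M$ by the value of $\varpi'_X$. Fix an admissible monomial $m$ of $A'_q(X)$, which we also regard as an admissible monomial of $A_q(X)$ so that $fg(m)$ makes sense and is a simple oriented graph. By the previous paragraph, the coefficient of $m$ in $\varpi'_X(M_G(X))$ is the number of admissible $M$ with $fg(M)\approx G$ and $S(fg(M))=fg(m)$ (as graphs with vertices in $X$), equivalently the number of Feynman graphs $F$ with vertices in $X$ such that $F\approx G$ and $S(F)=fg(m)$. This vanishes unless $fg(m)\approx S(G)$, and I would then check that when $fg(m)\approx S(G)$ this count depends only on the isoclasses of $S(G)$ and of $fg(m)$, not on $G$ itself: one recovers such an $F$ by distributing over the prescribed simple shadow $fg(m)$ the extra data that $G$ carries over $S(G)$ — the multiplicities of the parallel edges, the loops, and the incoming and outgoing external half-edges at each vertex — and counting the admissible distributions. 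Granting this, $\varpi'_X(M_G(X))$ depends only on $S(G)$, so $S(G)=S(H)$ forces $\varpi'_X(M_G(X))=\varpi'_X(M_H(X))$ for every quasi-ordered alphabet $X$.

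For $1\Rightarrow 2$, argue contrapositively. Assume $S(G)\neq S(H)$ and take $X$ to be an infinite set with the trivial quasi-order $x\leq_X y$ for all $x,y$, as in the proof of Theorem~\ref{theoprod}; over such $X$ every finite simple oriented graph occurs as $fg(m)$ for some admissible monomial $m$. Choose $m$ with distinct letters and $fg(m)\approx S(G)$; inflating $fg(m)$ to a copy of $G$ produces an admissible $M$ with $fg(M)\approx G$ and $\varpi'_X(M)=m$, so the coefficient of $m$ in $\varpi'_X(M_G(X))$ is nonzero. On the other hand $fg(N)\approx H$ forces $S(fg(N))\approx S(H)\not\approx S(G)\approx fg(m)$, so no admissible $N$ satisfies $fg(N)\approx H$ and $\varpi'_X(N)=m$, whence the coefficient of $m$ in $\varpi'_X(M_H(X))$ is zero. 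Therefore $\varpi'_X(M_G(X))\neq\varpi'_X(M_H(X))$, which is the desired conclusion.

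The step I expect to be the main obstacle is the counting assertion inside $2\Rightarrow 1$: that the number of $X$-labelled Feynman graphs isomorphic to $G$ with a prescribed simple shadow depends only on $S(G)$. This is precisely where one must control how the multiplicities of the parallel edges of $G$ collapse under $x_{i,j}^2=x_{i,j}$ and how they interact with the vertex symmetries of $S(G)$; once the description of $\varpi'_X$ on monomials is in hand, the rest of the argument is routine.
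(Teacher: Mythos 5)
Your direction $1\Rightarrow 2$ is fine: it is essentially the paper's argument in contrapositive form (the paper chooses an alphabet where $M_G(X)\neq 0$, takes a monomial of $M_G(X)$ and matches its $\varpi'_X$-image with the image of a monomial of $M_H(X)$; you separate the two sums over the trivially quasi-ordered infinite alphabet), and your description of $\varpi'_X$ on an admissible monomial is correct. The problem is $2\Rightarrow 1$, and it is exactly the step you flag and then "grant": the claim that the number of admissible monomials $M$ with $fg(M)\approx G$ and prescribed simple shadow $fg(m)$ depends only on $S(G)$ cannot be granted, because as you have stated it (counting monomials, equivalently $X$-labelled copies of $G$) it is false. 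Take $G$ with two vertices $a,b$, two internal edges $a\to b$ and one internal edge $b\to a$, and $H$ with exactly one edge in each direction, so that $S(G)=S(H)$ is the $2$-cycle. Over an alphabet in which all letters are mutually equivalent, the admissible monomials with graph isomorphic to $G$ and shadow supported on two fixed letters $u,v$ are $x_ux_vx_{u,v}^2x_{v,u}$ and $x_ux_vx_{u,v}x_{v,u}^2$ (two of them), whereas for $H$ there is only $x_ux_vx_{u,v}x_{v,u}$ (one). So the per-shadow count is governed by the interplay between the edge multiplicities of $G$ and the symmetries of $S(G)$, and the equality asserted by the lemma only balances out when $M_G(X)$ is handled as a sum over vertex labellings (injections $V(G)\hookrightarrow X$) -- the reading which also makes the product formula of Theorem \ref{theoprod} come out right -- in which case the relevant coefficient is the number of isomorphisms from $S(G)$ onto $fg(m)$; this automorphism bookkeeping is precisely the work you postponed, so as it stands the proof of $2\Rightarrow1$ is incomplete.

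The paper's proof of $2\Rightarrow 1$ avoids all counting. Since $S(G)=S(H)$, one identifies $V(G)=V(H)$ and lets $E$ be the common set of pairs $(i,j)$, $i\neq j$, carrying at least one edge; whether an injection $\sigma$ of this common vertex set into $X$ yields a legal monomial depends only on $E$, so $M_G(X)$ and $M_H(X)$ are sums indexed by one and the same family $\Lambda$ of injections, the $\sigma$-term for $G$ and the $\sigma$-term for $H$ differing only in the exponents of the variables $x_{\sigma(i),\sigma(j)}$, the loop variables $x_{\sigma(i),\sigma(i)}$ and the external variables. Applying $\varpi'_X$ erases exactly these differences term by term, so the two images coincide with no symmetry factors ever entering. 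I recommend you either adopt this termwise comparison, or, if you keep your grouping-by-shadow strategy, first fix the labelling convention for $M_G(X)$ and then compute the coefficient of each reduced monomial explicitly as an isomorphism count rather than as a count of "admissible distributions" of the extra data of $G$.
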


\begin{proof} $2.\Longrightarrow 1$. Let $X$ be a quasi-ordered alphabet. 
As $S(G)=S(H)$, we can assume that $V(G)=V(H)$.
We denote by $E$ the set of pairs $(i,j)$ such that there is an edge from $i$ to $j$ in $G$ and in $H$, with $i\neq j$. 
There exist non-zero scalars $\alpha_{i,j}$, $\alpha'_{i,j}$, for $(i,j)\in E$, and scalars $\alpha_i,$, $\alpha'_i$, 
$\beta_i$, $\beta'_i$, $\gamma_j$, $\gamma'_j$, and a set of injections $\Lambda$ from
$V(G)$ to $X$ such that:
\begin{align*}
M_G(X)&=\sum_{\sigma\in \Lambda} \prod_{i\in V(G)} x_{\sigma(i)} \prod_{(i,j)\in E} x_{\sigma(i),\sigma(j)}^{\alpha_{i,j}}
\prod_{i\in V(G)} x_{\sigma(i),\sigma(i)}^{\alpha_i} \prod_{i\in V(G)} x_{\sigma(i),\infty}^{\beta_i} \prod_{i\in V(G)}x_{-\infty,\sigma(j)}^{\gamma_j},\\
M_H(X)&=\sum_{\sigma\in \Lambda} \prod_{i\in V(G)} x_{\sigma(i)} \prod_{(i,j)\in E} x_{\sigma(i),\sigma(j)}^{\alpha'_{i,j}}
\prod_{i\in V(G)} x_{\sigma(i),\sigma(i)}^{\alpha'_i} \prod_{i\in V(G)} x_{\sigma(i),\infty}^{\beta'_i} \prod_{i\in V(G)}x_{-\infty,\sigma(j)}^{\gamma'_j}.
\end{align*}
Their image under $\varpi'_X$ are both equal to:
\begin{align*}
&\sum_{\sigma\in \Lambda} \prod_{i\in V(G)} x_{\sigma(i)} \prod_{(i,j)\in E} x_{\sigma(i),\sigma(j)}.
\end{align*}

$1.\Longrightarrow 2$. Let us choose an alphabet $X$, such that $M_G(X)$ and $M_H(X)$ are non-zero. Let $M$ be a monomial of $M_G(X)$:
\[M=\prod_{i\in X} x_i^{\epsilon_i} \prod_{i\leq_X j} x_{i,j}^{\alpha_{i,j}}\prod_{j\in X} x_{-\infty,j}^{\beta_j}
\prod_{i\in X} x_{i,+\infty}^{\gamma_i}.\]
Then:
\[\varpi'_X(M)=\prod_{i\in X} x_i^{\epsilon_i} \prod_{i\leq_X j,\: i\neq j} x_{i,j}^{\tilde{\alpha}_{i,j}},\]
where $\tilde{\alpha}'_{i,j}=1$ if $\alpha_{i,j}\neq 0$ and $0$ otherwise. This monomial appears in $M_H(X)$, so there is a monomial $M'$
in $M_H(X)$, of the form:
\[M'=\prod_{i\in X} x_i^{\epsilon_i} \prod_{i\leq_X j} x_{i,j}^{\alpha'_{i,j}}\prod_{j\in X} x_{-\infty,j}^{\beta'_j}
\prod_{i\in X} x_{i,+\infty}^{\gamma'_i},\]
with, if $i\neq j$, $\alpha'_{i,j}\neq 0$ if, and only if, $\tilde{\alpha}_{i,j}=1$ if, and only if, $\alpha_{i,j}\neq 0$.
This implies that $S(G)=S(H)$. \end{proof}

For any simple graph $G$, we denote $M_G(X)=\varpi'_X(M_H(X))$ and $\bfM_G(X)=\varpi'_X(\bfM_H(X))$, 
where $H$ is any Feynman graph such that $S(H)=G$ (for example, $H=G$). These elements, if they are all non-zero, 
form a basis of a subalgebra of $A'_q(X)$ or $\bfA'_q(X)$.
We obtain a quotient of $H_\fg$ and $\bfH_\fg$ based on simple graphs. To sum up:

\begin{theo} \label{theographes}
Let $H_{\sg}$ be the vector space generated by the set of simple graphs and let $q\in \K$. 
\begin{enumerate}
\item It is given a Hopf algebra structure: for any simple graphs $G$ and $H$,
\begin{align}
\label{E9}G._qH&=\sum_{\sigma:V(G)\supseteq A\hookrightarrow V(H)} q^{|A|}S(G\sqcup_\sigma H),\\
\label{E10}\Delta(G)&=\sum_{A\in I(G)}S(G_{\mid V(G)\setminus A})\otimes S(G_{\mid A}).
\end{align}
\item We define a second coproduct by:
\begin{align}
\label{E11}\delta(G)&=\sum_{\sim \in CE(G)}  S(G/\sim)\otimes S(G_{\mid \sim}).
\end{align}
Then $(H_{\sg},._0,\delta)$ is a  bialgebra. Moreover, $(H_{\sg},._0,\Delta)$ is a bialgebra in the category
of $(H_{\sg},._0,\delta)$-comodules.
\item Let us consider the map:
\[S:\left\{\begin{array}{rcl}
H_\fg&\longrightarrow&H_\sg\\
G&\longrightarrow&S(G).
\end{array}\right.\]
It is a Hopf algebra morphism from $(H_\fg,._q,\Delta)$ to $(H_\sg,._q,\Delta)$ and from $(H_\fg,._0,\delta)$ to
$(H_\sg,._0,\delta)$.
\end{enumerate}\end{theo}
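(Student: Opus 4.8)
The plan is to deduce everything about $H_{\sg}$ from the already-established structure on $H_{\fg}$ (Corollary \ref{corDelta} and the second-coproduct Corollary) by pushing forward along the surjection $S$, exactly as the polynomial realization lemmas were used for $H_{\fg}$ itself. The key technical input is the preceding lemma: for Feynman graphs $G,H$ one has $\varpi'_X(M_G(X))=\varpi'_X(M_H(X))$ for all quasi-ordered $X$ if and only if $S(G)=S(H)$. This means that for a quasi-ordered alphabet $X$ realizing $H_{\fg}(X)$ faithfully, the map $M_G(X)\mapsto \varpi'_X(M_G(X))$ on $H_{\fg}(X)$ has kernel exactly $\ker S$ (the span of $G-H$ with $S(G)=S(H)$), so it identifies $H_{\sg}$, as a vector space, with a subalgebra $H_{\sg}(X)\subseteq A'_q(X)$, with basis the nonzero $M_G(X)$ for $G$ simple (using that a suitable infinite $X$ makes them all nonzero, by the same infinite-alphabet argument as in Theorem \ref{theoprod}).

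\textbf{Step 1.} First I would check that $\ker S$ is a biideal of $(H_{\fg},._q,\Delta)$ and of $(H_{\fg},._0,\delta)$. For the product this amounts to the commutativity of the left square in the proposition on quotients of $A_q$: since $\varpi'_X$ is an algebra morphism and $\varpi'_X\circ\Theta^{\fg}_X = \Theta^{\sg}_X\circ S$ (with $\Theta$ the realization maps), we get $S(a._q b)$ depends only on $S(a),S(b)$; concretely $S(a)._q S(b):=\varpi'_X\bigl(\Theta^{\fg}_X(a)\Theta^{\fg}_X(b)\bigr)$ is well-defined and equals the formula (\ref{E9}) after reading off which monomials survive $\varpi'_X$ — deleting external edges ($x_{i,\infty},x_{-\infty,j}\mapsto 1$), loops ($x_{i,i}\mapsto 1$), and collapsing multiplicities ($x_{i,j}^2\mapsto x_{i,j}$) is precisely the operation $S$ on $G\sqcup_\sigma H$. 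For the coproducts, apply $\varpi'_X\otimes\varpi'_Y$ to the formulas of Theorem \ref{theocop} and to the second-coproduct Proposition, using again the commuting squares: $(\varpi'_X\otimes\varpi'_Y)\circ\Delta_{X,Y} = \Delta_{X,Y}\circ\varpi'_{X\sqcup Y}$ and likewise for $\delta$, which turns (\ref{E4}) into (\ref{E10}) and (\ref{E5}) into (\ref{E11}) term by term, since $S(G_{\mid A})$, $S(G_{\mid V(G)\setminus A})$, $S(G/\!\sim)$, $S(G_{\mid\sim})$ are exactly the images of the corresponding restrictions/contractions of $G$.

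\textbf{Step 2.} Then I would run the transfer argument verbatim from the proof of Corollary \ref{corDelta}: pick quasi-ordered alphabets $X,Y,Z$ making $\Theta^{\sg}_X,\Theta^{\sg}_Y,\Theta^{\sg}_Z$ injective (an infinite $X$ with $i\le_X j$ for all $i,j$ works, as in Theorem \ref{theoprod}); associativity of $._q$, multiplicativity of $\Delta$ and $\delta$, coassociativity, counit axioms, and the cointeraction identity $(\Delta\otimes Id)\circ\delta = m_{1,3,24}\circ(\delta\otimes\delta)\circ\Delta$ all descend from the identities of Propositions \ref{double}, the $\delta$-coassociativity Lemma, and Lemma \ref{propcoaction}, precomposed with $\Theta^{\sg}$ and stripped by injectivity. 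Connectedness/gradedness is inherited since $S$ preserves the number of vertices, so $(H_{\sg},._0,\Delta)$ is graded connected, hence a Hopf algebra; for $q\ne 0$ it is filtered connected, hence still a Hopf algebra. Finally, that $S:H_{\fg}\to H_{\sg}$ is a Hopf (resp.\ bialgebra) morphism is immediate from Step 1, since by construction it intertwines the realization maps.

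\textbf{The main obstacle} is not conceptual but bookkeeping: verifying that the three "cleanup" relations defining $A'_q(X)$ interact correctly with the doubled- and squared-alphabet morphisms — i.e.\ that $\Delta_{X,Y}$ and $\delta_{X,Y}$ genuinely descend to the quotients (the commuting squares in the proposition on quotients). One must check, e.g., that $\Delta_{X,Y}(x_{i,j})=x_{i,\infty}\otimes x_{-\infty,j}\mapsto 1\otimes 1$ for $i\in X,j\in Y$, consistent with $x_{i,j}\mapsto 1\otimes 1$ in $A'_q$, and that $\delta_{X,Y}(x_{(i,j),(i',j')})$ in the case $i\sim_X i'$ lands on $1\otimes x_{j,j'}$ whose square relation is respected — all of which are the "immediate verifications" of that proposition. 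Once those squares are in hand, the descent to $H_{\sg}$ is purely formal. I would therefore keep the written proof short: cite the preceding lemma to get the realization of $H_{\sg}$, cite the commuting-squares proposition to push the structural identities down, and invoke the template of Corollary \ref{corDelta} for the rest.
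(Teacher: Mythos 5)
Your proposal is correct and follows essentially the same route as the paper, which proves this theorem implicitly by defining $M_{S(G)}(X)=\varpi'_X(M_G(X))$, invoking the preceding lemma (equality under $\varpi'_X$ iff same simplification) and the commuting-square proposition for $\Delta_{X,Y}$ and $\delta_{X,Y}$ on $A'_q$, and then rerunning the transfer argument of Theorem \ref{theoprod}, Theorem \ref{theocop} and Corollary \ref{corDelta}. Your added remarks on the faithful all-comparable infinite alphabet and on filtered/graded connectedness are exactly the implicit ingredients the paper relies on.
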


It has a non-commutative version:

\begin{theo}
Let $\bfH_{\sg}$ be the vector space generated by the set of ordered simple graphs and let $q\in \K$. 
\begin{enumerate}
\item It is given a Hopf algebra structure by (\ref{E9})-(\ref{E10}).
\item We define a second coproduct by (\ref{E11}).
Then $(\bfH_{\sg},._0,\delta)$ is a bialgebra. Moreover, $(\bfH_{\sg},._0,\Delta)$ is a bialgebra in the category of $(H_{\sg},._0,\delta)$-comodules.
\item Let us consider the map:
\[S:\left\{\begin{array}{rcl}
\bfH_\fg&\longrightarrow&\bfH_\sg\\
G&\longrightarrow&S(G).
\end{array}\right.\]
It is a Hopf algebra morphism from $(\bfH_\fg,._q,\Delta)$ to $(\bfH_\sg,._q,\Delta)$ and from $(\bfH_\fg,._0,\delta)$ to
$(\bfH_\sg,._0,\delta)$.
\end{enumerate}\end{theo}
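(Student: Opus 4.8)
The proof runs parallel to those of Theorem~\ref{theographes} and Corollary~\ref{corDelta}: we transport every structure through the polynomial realizations in the algebras $\bfA'_q(X)$. For a quasi-ordered alphabet $X$, let $\Theta_X:\bfH_\sg\longrightarrow\bfA'_q(X)$ send an ordered simple graph $G$ to $\bfM_G(X)=\varpi'_X(\bfM_H(X))$, for any ordered Feynman graph $H$ with $S(H)=G$ (say $H=G$); the preceding lemma shows this is well defined. Since $\varpi'_X$ is an algebra morphism and $\varpi'_X(\bfM_H(X))=\bfM_{S(H)}(X)$, applying $\varpi'_X$ to the product formula of Theorem~\ref{theoprod}(2) yields
\[
\bfM_G(X)\,\bfM_H(X)=\sum_{\sigma:V(G)\supseteq A\hookrightarrow V(H)}q^{|A|}\,\bfM_{S(G\sqcup_\sigma H)}(X),
\]
which is formula (\ref{E9}); in particular the $\bfM_G(X)$ span a subalgebra of $\bfA'_q(X)$.

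Next, the Proposition producing the induced morphisms $\Delta_{X,Y}$, $\delta_{X,Y}$ on $\bfA'_q$ and $\bfA'_q$ gives commuting squares $\Delta_{X,Y}\circ\varpi'_{X\sqcup Y}=(\varpi'_X\otimes\varpi'_Y)\circ\Delta_{X,Y}$ and $\delta_{X,Y}\circ\varpi'_{XY}=(\varpi'_X\otimes\varpi'_Y)\circ\delta_{X,Y}$. Composing the first with the noncommutative formula of Theorem~\ref{theocop} gives
\[
\Delta_{X,Y}(\bfM_G(X\sqcup Y))=\sum_{A\in I(G)}\bfM_{S(G_{\mid V(G)\setminus A})}(X)\otimes\bfM_{S(G_{\mid A})}(Y),
\]
that is (\ref{E10}); composing the second with the realization of $\delta$ on ordered Feynman graphs --- where the relevant parameter is $q=0$, so that no $\psi_q$-factor intervenes --- gives $\delta_{X,Y}(\bfM_G(XY))=\sum_{\sim\in CE(G)}\bfM_{S(G/\sim)}(X)\otimes\bfM_{S(G_{\mid\sim})}(Y)$, that is (\ref{E11}); and the projected map $\rho_{X,Y}=(\mathrm{Id}\otimes p_Y)\circ\delta_{X,Y}$ realizes the coaction $\rho(G)=\sum_{\sim}S(G/\sim)\otimes S(G_{\mid\sim})$ with values in the \emph{commutative} bialgebra $H_\sg$.

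To transfer structure I would then exhibit a separating alphabet, as in Theorem~\ref{theoprod}: take $X$ infinite with all elements equivalent; for an ordered simple graph $G$ with vertices $v_1<\dots<v_k$ and an injection $\tau:V(G)\hookrightarrow X$, the monomial $x_{\tau(v_1)}\dots x_{\tau(v_k)}\prod x_{\tau(v_a),\tau(v_b)}$ (product over the edges $v_a\to v_b$ of $G$) is admissible with associated ordered graph $G$, so $\bfM_G(X)\neq 0$; as the $\bfM_G(X)$ have pairwise disjoint supports, $\Theta_X$ is injective. Choosing three such alphabets $X,Y,Z$, the identities $\Theta_X(ab)=\Theta_X(a)\Theta_X(b)$, $(\Theta_X\otimes\Theta_Y)\circ\Delta=\Delta_{X,Y}\circ\Theta_{X\sqcup Y}$, and the analogous ones for $\delta$ and $\rho$, pull back from $\bfA'_q$ the associativity of $._q$, the coassociativity and multiplicativity of $\Delta$ and $\delta$ (from Proposition~\ref{double} and the coassociativity of $\delta_{X,Y}$, in their $\bfA'$-versions), the counit axioms, and the cointeraction identity $(\Delta\otimes\mathrm{Id})\circ\rho=m_{1,3,24}\circ(\rho\otimes\rho)\circ\Delta$ (from the $\bfA'$-analogue of Lemma~\ref{propcoaction} and the remark following it). Since $(\bfH_\sg,m,\Delta)$ is filtered by the number of vertices --- graded when $q=0$ --- and connected, it is a Hopf algebra; this proves 1 and 2. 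For 3, the identity $\Theta^{\mathcal{SG}}_X\circ S=\varpi'_X\circ\Theta^{\mathcal{FG}}_X$ together with the commuting squares above shows that $S$ intertwines $._q$, $\Delta$ and $\delta$ on the realized level, and injectivity of the $\Theta$'s then forces $S:\bfH_\fg\to\bfH_\sg$ to be a bialgebra morphism, hence a Hopf algebra morphism since it preserves the filtration.

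The only genuine work is bookkeeping: one must check that $\varpi'_X$ carries $\bfM_H(X)$ to $\bfM_{S(H)}(X)$ and that $S$ is compatible with the operations $G\mapsto G_{\mid A}$, $G\mapsto G/\!\sim$, $G\mapsto G_{\mid\sim}$ up to the idempotency $S\circ S=S$, so that the Feynman-graph formulas of Theorems~\ref{theoprod} and \ref{theocop} and of the $\delta$-Proposition really push forward to (\ref{E9})--(\ref{E11}). Once this is in place everything is a transparent noncommutative lift of the commutative case (Theorem~\ref{theographes}), with the simplification that the parameter entering $\delta$ is $0$ and the $\psi_q$-factors disappear; I do not expect any substantively new difficulty.
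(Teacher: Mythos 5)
Your proposal is correct and follows essentially the route the paper intends: the paper states this noncommutative version without proof, leaving it as the same polynomial-realization argument (via $\bfA'_q(X)$, the projection $\varpi'_X$, a separating alphabet for injectivity, and the remark on $\rho_{X,Y}=(\mathrm{Id}\otimes p_Y)\circ\delta_{X,Y}$ for the cointeraction) that you reconstruct. Your bookkeeping points, notably $\varpi'_X(\bfM_H(X))=\bfM_{S(H)}(X)$ and the $q=0$ simplification removing the $\psi_q$-factors, are exactly the checks the paper's commutative proofs rely on.
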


\begin{remark}
As simple graphs are Feynman graphs, there is a canonical injection $\kappa:H_\sg\longrightarrow \fg$. 
It is compatible with $._q$ if, and only if, $q=0$. Indeed, the disjoint union product $._0$ is compatible with 
$\kappa$. If $q\neq 0$, taking $G=H=\xymatrix{\rond{}\ar[r]&\rond{}}$:
\begin{align*}
&\mbox{In $H_\fg$},&
G._q H&=\xymatrix{\rond{}\ar[r]&\rond{}}\:\xymatrix{\rond{}\ar[r]&\rond{}}+q\:\xymatrix{\rond{}&\rond{}\ar[r]\ar[l]&\rond{}}
+2q\:\xymatrix{\rond{}\ar[r]&\rond{}\ar[r]&\rond{}}\\[2mm]
&&&+q\:\xymatrix{\rond{}\ar[r]&\rond{}&\rond{}\ar[l]}+q^2\:\xymatrix{\rond{}\ar@/_1pc/[r]\ar@/^1pc/[r]&\rond{}}
+q^2\:\xymatrix{\rond{}\ar@/_1pc/[r]&\rond{}\ar@/_1pc/[l]},\\[2mm]
&\mbox{In $H_\sg$},&
G._q H&=\xymatrix{\rond{}\ar[r]&\rond{}}\:\xymatrix{\rond{}\ar[r]&\rond{}}+q\:\xymatrix{\rond{}&\rond{}\ar[r]\ar[l]&\rond{}}
+2q\:\xymatrix{\rond{}\ar[r]&\rond{}\ar[r]&\rond{}}\\[2mm]
&&&+q\:\xymatrix{\rond{}\ar[r]&\rond{}&\rond{}\ar[l]}+q^2\:\xymatrix{\rond{}\ar[r]&\rond{}}
+q^2\:\xymatrix{\rond{}\ar@/_1pc/[r]&\rond{}\ar@/_1pc/[l]}.
\end{align*}
Hence, $\kappa$ is not compatible with $._q$. 
Moreover, $\kappa$ is not compatible with $\Delta$ and $\delta$. For example, if $G=\xymatrix{\rond{}\ar[r]&\rond{}}$:
\begin{align*}
&\mbox{In $H_\fg$},&\Delta(G)&=G\otimes 1+1\otimes G+\xymatrix{\rond{}\ar[r]&}\otimes\xymatrix{\ar[r]&\rond{}},\\
&&\delta(G)&=G\otimes \xymatrix{\rond{}\ar[r]&}\xymatrix{\ar[r]&\rond{}}+\xymatrix{\rond{}}\hspace{2mm}\xymatrix{\rond{}}\otimes G,\\[2mm]
&\mbox{In $H_\sg$},&\Delta(G)&=G\otimes 1+1\otimes G+\xymatrix{\rond{}} \otimes \xymatrix{\rond{}},\\
&&\delta(G)&=G\otimes \xymatrix{\rond{}}\hspace{2mm}\xymatrix{\rond{}}+\xymatrix{\rond{}}\hspace{2mm}\xymatrix{\rond{}}\otimes G.
\end{align*}\end{remark}

\subsection{Simple graphs with no cycle}

\begin{lemma}
Let $G$ be a simple graph. The following conditions are equivalent:
\begin{enumerate}
\item For any ordered alphabet, $M_G(X)=0$.
\item $G$ has a cycle.
\end{enumerate}
Moreover, there exists an ordered alphabet $X$, such that for any simple graph with no cycle, $M_G(X)\neq 0$.
\end{lemma}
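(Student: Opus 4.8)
The plan is to mirror the proof of the analogous statement for Feynman graphs (Lemma \ref{graphessanscycles}), transporting it along the surjection $\varpi'_X$. Recall that for a simple graph $G$ we have defined $M_G(X)=\varpi'_X(M_H(X))$ for any Feynman graph $H$ with $S(H)=G$; taking $H=G$ itself, the monomials of $M_G(X)$ are the $\varpi'_X$-images of the admissible monomials $M$ with $fg(M)\approx G$. Since applying $\varpi'_X$ only erases the external-edge variables $x_{i,\infty},x_{-\infty,j}$, sends $x_{i,i}$ to $1$, and collapses multiplicities $x_{i,j}^{\alpha_{i,j}}$ to $x_{i,j}$, the key structural feature is preserved: for each internal edge from $i$ to $j$ with $i\neq j$ present in $G$, the variable $x_{i,j}$ (with $i\leq_X j$) appears in each surviving monomial, so a monomial of $M_G(X)$ can be nonzero only if the vertices admit an injection $\tau$ into $X$ with $\tau(i)\leq_X\tau(j)$ whenever there is an edge $i\to j$.

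For the implication $2.\Longrightarrow 1$: suppose $G$ has a cycle $(i_1,\dots,i_k)$ and, for contradiction, that $M_G(X)\neq 0$ for some ordered alphabet $X$. A nonzero monomial of $M_G(X)$ comes from an injection $\tau:V(G)\hookrightarrow X$, and the cycle forces $\tau(i_1)\leq_X\tau(i_2)\leq_X\cdots\leq_X\tau(i_k)\leq_X\tau(i_1)$; since $\leq_X$ is antisymmetric, $\tau(i_1)=\cdots=\tau(i_k)$, contradicting injectivity of $\tau$ (here $k\geq 2$). Hence $M_G(X)=0$.

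For the implication $1.\Longrightarrow 2$, argued contrapositively: if $G$ has no cycle, I build a total order on $V(G)$ compatible with the edges, exactly as in the proof of Lemma \ref{graphessanscycles} — by induction on $|V(G)|$, using that an acyclic graph has a source (a vertex with no incoming edge), deleting it, ordering the rest, and placing the source first. Then, taking $X=\N$ with its usual order and labelling $v_1\leq\cdots\leq v_k$ according to this order, the monomial $M=x_1\cdots x_k\prod_{i<j,\ v_i\to v_j}x_{i,j}$ is admissible, lies in $A'_q(X)$, and satisfies $fg$-data matching $G$, so it occurs in $M_G(X)$ and $M_G(X)\neq 0$. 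The same alphabet $X=\N$ works simultaneously for every cycle-free simple graph, giving the last sentence of the statement. I do not expect a real obstacle here; the only point requiring a little care is checking that passing through $\varpi'_X$ does not accidentally kill the monomial $M$ or merge it with another monomial of opposite sign — but since the $x_i$ and the $x_{i,j}$ with $i\neq j$ survive $\varpi'_X$ intact and the injection $\tau$ is fixed, the image is a single nonzero monomial, so no cancellation can occur.
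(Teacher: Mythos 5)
Your proof is correct and follows essentially the same route as the paper, whose proof is simply "Similar to the proof of Lemma \ref{graphessanscycles}": the cycle-plus-antisymmetry contradiction for one direction, and the source-induction topological order with $X=\N$ producing an explicit nonzero monomial for the other. Your added check that $\varpi'_X$ neither kills the exhibited monomial nor creates cancellations is a sensible (if routine) precision, not a departure from the paper's argument.
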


\begin{proof} Similar to the proof of Lemma \ref{graphessanscycles}. \end{proof}

Consequently, we obtain Hopf algebras structures on simple graphs with no cycle:

\begin{theo}\label{theosgwc}
Let $H_{\ncsg}$ be the vector space generated by the set of simple graphs with no cycle and let $q\in \K$. 
It is given a Hopf algebra structure: for any simple graphs with no cycle $G$ and $H$,
\begin{align}
\label{E12}G._qH&=\sum_{\substack{\sigma:V(G)\supseteq A\hookrightarrow V(H),\\ \mbox{\scriptsize $G\sqcup_\sigma H$ with no cycle}}}
q^{|A|} S(G\sqcup_\sigma H),\\
\label{E13}\Delta(G)&=\sum_{A\in I(G)}S(G_{\mid V(G)\setminus A})\otimes S(G_{\mid A}).
\end{align}
Let us consider the map:
\[T:\left\{\begin{array}{rcl}
H_\sg&\longrightarrow&H_\ncsg\\
G&\longrightarrow&\begin{cases}
G\mbox{ if $G$ has no cycle},\\
0\mbox{ otherwise}.
\end{cases}
\end{array}\right.\]
It is a Hopf algebra morphism from $(H_\sg,._q,\Delta)$ to $(H_\ncsg,._q,\Delta)$.
\end{theo}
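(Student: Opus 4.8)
The plan is to realize $(H_\ncsg,._q,\Delta)$ inside the algebras $A'_q(X)$ attached to \emph{ordered} alphabets, following the pattern of Theorem \ref{theographes} (which used quasi-ordered alphabets) and grafting on the cycle-removal mechanism of Theorem \ref{theofgwc}. Fix the ordered alphabet $X=\N$ with its usual order. By the lemma above, the elements $M_G(X)=\varpi'_X(M_H(X))$ (for any Feynman graph $H$ with $S(H)=G$), where $G$ ranges over simple oriented graphs with no cycle, are all nonzero; and since a monomial of $A'_q(X)$ determines its simple graph (vertices from the $x_i$'s, edges from the $x_{i,j}$'s), distinct such $G$ have disjoint supports, hence are linearly independent. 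So the map $\Theta_X:H_\ncsg\longrightarrow A'_q(X)$, $G\mapsto M_G(X)$, is injective, and so are $\Theta_X\otimes\Theta_Y$ and $\Theta_X\otimes\Theta_Y\otimes\Theta_Z$ when $X,Y,Z$ are copies of $\N$.

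First I would transport, through these maps, the product and coproduct formulas already established for simple graphs. The realization identities $M_G(X)M_H(X)=\sum_\sigma q^{|A|}M_{S(G\sqcup_\sigma H)}(X)$ and $\Delta_{X,Y}(M_G(X\sqcup Y))=\sum_{A\in I(G)}M_{S(G_{\mid V(G)\setminus A})}(X)\otimes M_{S(G_{\mid A})}(Y)$ remain valid for ordered alphabets ($X\sqcup Y$ is then ordered); but now, by the lemma, every summand indexed by a graph carrying a cycle vanishes. Since the operation $S$ only erases external edges, loops and repeated edges, and none of these can lie on a directed cycle through pairwise distinct vertices, a Feynman graph has a cycle if and only if its image under $S$ does; so the surviving summands are exactly those of (\ref{E12}) and (\ref{E13}) (the restrictions $G_{\mid A}$, $G_{\mid V(G)\setminus A}$ being again cycle-free, as deleting vertices never creates a directed cycle). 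Consequently $\Theta_X(H_\ncsg)$ is a subalgebra of $A'_q(X)$, $\Delta_{X,Y}$ maps it into $\Theta_X(H_\ncsg)\otimes\Theta_Y(H_\ncsg)$, and the operations pulled back to $H_\ncsg$ are precisely (\ref{E12})--(\ref{E13}). Coassociativity of $\Delta$, its multiplicativity and its compatibility with the counit are then deduced, via the injectivity of the $\Theta$'s, from the corresponding identities for $\Delta_{X,Y}$ (in particular $(\Delta_{X,Y}\otimes Id)\circ\Delta_{X\sqcup Y,Z}=(Id\otimes\Delta_{Y,Z})\circ\Delta_{X,Y\sqcup Z}$ of Proposition \ref{double}, specialised to ordered alphabets), exactly as in the proof of Corollary \ref{corDelta}. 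Finally $H_\ncsg$ is filtered by the number of vertices, graded when $q=0$, with one-dimensional component in degree $0$, hence connected, so it is a Hopf algebra.

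It then remains to show that $T$ is a Hopf algebra morphism. It is linear and sends $1$ to $1$, so it is compatible with unit and counit; and once it is shown to be a bialgebra morphism it automatically commutes with the antipodes, both Hopf algebras being connected filtered. For multiplicativity and comultiplicativity I would argue on a generating simple graph $G$ (and on a pair $G,H$), splitting into two cases. If $G$ is cycle-free, then each $G_{\mid A}$ is cycle-free and $S$ preserves this, so $T\otimes T$ acts as the identity on $\Delta_{H_\sg}(G)$, which then equals $\Delta_{H_\ncsg}(T(G))$; similarly for the product, using the equivalence ``$G\sqcup_\sigma H$ cycle-free $\Longleftrightarrow S(G\sqcup_\sigma H)$ cycle-free'' noted above. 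If $G$ contains a cycle, then $T(G)=0$, and I must check that $T$ kills $\Delta_{H_\sg}(G)$ and $G._qH$: any ideal $A\in I(G)$ is closed along directed paths, so the vertices of a fixed cycle lie entirely in $A$ or entirely in $V(G)\setminus A$, whence one of $S(G_{\mid A})$, $S(G_{\mid V(G)\setminus A})$ still carries that cycle and is annihilated by $T$; and gluing along an injection defined on a subset of $V(G)$ keeps the (distinct) cycle vertices distinct and keeps all cycle edges present in $S(G\sqcup_\sigma H)$, so every summand of $G._qH$ is annihilated by $T$.

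The hard part — really the only step that is not bookkeeping — is this combinatorial persistence of cycles: that a directed cycle survives the operation $S$, survives every gluing $G\sqcup_\sigma H$, and cannot be split between the two halves of an ideal decomposition. Everything else is a verbatim transcription of the mechanism already used for $H_\fg$, $H_\ncfg$ and $H_\sg$.
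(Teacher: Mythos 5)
Your proposal is correct and follows essentially the same route as the paper: restrict the polynomial realization to ordered alphabets, use the lemma that $M_G(X)$ vanishes for every ordered alphabet exactly when the simple graph $G$ has a cycle (and is nonzero for some ordered $X$ otherwise), and transport the product and coproduct of Theorem \ref{theographes} through the injective maps $\Theta_X$, the morphism $T$ then being the induced quotient map. The paper leaves the cycle-persistence checks (under $S$, under gluing $G\sqcup_\sigma H$, and across ideal decompositions) implicit, and your write-up simply supplies those details correctly.
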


Here is its non-commutative version:

\begin{theo}
Let $\bfH_{\ncsg}$ be the vector space generated by the set of ordered simple graphs with no cycle and let $q\in \K$. 
It is given a Hopf algebra structure by (\ref{E12})-(\ref{E13}).
Let us consider the map:
\[T:\left\{\begin{array}{rcl}
\bfH_\fg&\longrightarrow&\bfH_\ncfg\\
G&\longrightarrow&\begin{cases}
G\mbox{ if $G$ has no cycle},\\
0\mbox{ otherwise}.
\end{cases}
\end{array}\right.\]
It is a Hopf algebra morphism from $(\bfH_\sg,._q,\Delta)$ to $(\bfH_\ncsg,._q,\Delta)$ and from $(\bfH_\sg,._0,\delta)$ to
$(\bfH_\ncsg,._0,\delta)$.
\end{theo}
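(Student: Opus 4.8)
The plan is to run, in the quotients $\bfA'_q(X)$ and over \emph{ordered} alphabets $X$ only, the same transport-of-structure argument used for Corollary~\ref{corDelta}, Theorem~\ref{theographes} and Theorem~\ref{theofgwc}. First I would record the analogue of Lemma~\ref{graphessanscycles} for ordered simple graphs stated just above: for an ordered alphabet $X$ one has $\bfM_G(X)=0$ precisely when $G$ has a cycle (a cycle forces a chain $j_1\leq_X\cdots\leq_X j_k\leq_X j_1$ among pairwise distinct vertices, impossible for an order), and there is a fixed ordered alphabet $X_0$, say $\N$, with $\bfM_G(X_0)\neq 0$ for every acyclic ordered simple graph $G$ (order the vertices of $G$ compatibly with its edges). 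Hence for every ordered alphabet $X$ the realization $\Theta_X\colon\bfH_\sg\to\bfA'_q(X)$, $G\mapsto\bfM_G(X)$, vanishes exactly on the span of the graphs with a cycle, so it factors as $\Theta_X=\iota_X\circ T$ with $T\colon\bfH_\sg\to\bfH_\ncsg$ the projection killing those graphs; and $\iota_{X_0}$ is injective, the $\bfM_G(X_0)$ with $G$ acyclic being nonzero with disjoint supports, as in Theorem~\ref{theoprod}.

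Second, I would read off the structure maps of $\bfH_\ncsg$ by pushing forward those of $\bfH_\sg$ and discarding the terms that vanish over an ordered alphabet. From the noncommutative form of Theorem~\ref{theographes}, $\bfM_G(X)\bfM_H(X)=\sum_{\sigma\colon V(G)\supseteq A\hookrightarrow V(H)}q^{|A|}\bfM_{S(G\sqcup_\sigma H)}(X)$; over an ordered $X$ the terms with $S(G\sqcup_\sigma H)$ cyclic disappear, giving~(\ref{E12}). Likewise $\Delta_{X,Y}(\bfM_G(X\sqcup Y))=\sum_{A\in I(G)}\bfM_{S(G_{\mid V(G)\setminus A})}(X)\otimes\bfM_{S(G_{\mid A})}(Y)$, and since any subgraph of an acyclic graph is acyclic all terms lie in $\bfH_\ncsg(X)\otimes\bfH_\ncsg(Y)$, giving~(\ref{E13}); the same holds for $\delta$ at $q=0$, since $G/\!\sim$ and $G_{\mid\sim}$ arise from $G$ by deleting internal edges, so~(\ref{E11}) restricts to $\bfH_\ncsg$. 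Associativity of $\cdot_q$, coassociativity of $\Delta$ and of $\delta$, and compatibility of $\Delta$ with $\cdot_q$ then follow exactly as in Corollary~\ref{corDelta}, from $(X\sqcup Y)\sqcup Z=X\sqcup(Y\sqcup Z)$, $(XY)Z=X(YZ)$ and $(X\sqcup Y)Z=XZ\sqcup YZ$, evaluated on ordered alphabets making the relevant tensor powers of $\iota$ injective. The filtration of $\bfH_\ncsg$ by the number of vertices, whose degree-zero part is $\K$, then upgrades the bialgebra $(\bfH_\ncsg,\cdot_q,\Delta)$ to a connected Hopf algebra.

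Third, for the morphism $T\colon\bfH_\sg\to\bfH_\ncsg$ (identity on acyclic graphs, zero on graphs with a cycle), the factorization $\Theta_{X_0}=\iota_{X_0}\circ T$ settles everything: $\Theta_{X_0}$ is an algebra morphism for $\cdot_q$ and intertwines $\Delta$ with $\Delta_{X_0,Y_0}$ (and, at $q=0$, $\delta$ with $\delta_{X_0,Y_0}$), while $\iota_{X_0}$ and $\iota_{X_0}\otimes\iota_{Y_0}$ are injective; hence $T$ is a bialgebra morphism, and being a map of connected filtered Hopf algebras it intertwines the antipodes. A direct verification works equally well: $T$ is multiplicative because a gluing $G\sqcup_\sigma H$ where $G$ or $H$ carries a cycle still carries it — the cycle edges are never turned into loops by $\sqcup_\sigma$ and $S$ neither creates nor destroys cycles — so a cyclic input yields only cyclic outputs, while for two acyclic inputs $T$ is the identity and~(\ref{E9}) becomes~(\ref{E12}); and $(T\otimes T)\circ\Delta=\Delta\circ T$ because when $G$ has a cycle $C$, every ideal $A$ of $G$ satisfies either $C\subseteq A$ or $C\cap A=\emptyset$ (an ideal containing one vertex of the cycle contains them all), so $C$ survives in one of $G_{\mid A}$, $G_{\mid V(G)\setminus A}$ and the corresponding tensor factor is killed by $T$; similarly each cycle of $G$ lies in a single class of every $\sim\in CE(G)$, hence is retained in $G_{\mid\sim}$, so $T$ kills $S(G_{\mid\sim})$ and $(T\otimes T)\circ\delta=\delta\circ T$ at $q=0$.

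All steps are of the ``same as a previous proof'' kind, so the only points demanding genuine attention are the combinatorial closure facts used above — that a cycle of a graph lies inside one of $G_{\mid A}$, $G_{\mid V(G)\setminus A}$ for each ideal $A$ and inside a single class of every $G$-compatible equivalence, and that $S$ and $\sqcup_\sigma$ preserve the presence of a cycle — together with the check that the single separating ordered alphabet $X_0$ can be chosen to serve all the finitely many graphs and tensor products occurring in a fixed degree, exactly as in Theorem~\ref{theoprod} and Corollary~\ref{corDelta}. Once these are in hand, the remainder is bookkeeping.
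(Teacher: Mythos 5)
Your proposal is correct and follows essentially the paper's own (largely implicit) route: restrict the polynomial realization $\Theta_X$ to ordered alphabets, invoke the vanishing lemma analogous to Lemma \ref{graphessanscycles} to see that cyclic graphs are killed and acyclic ones remain independent for a suitable $X_0$, and transport the structures as in Theorem \ref{theoprod}, Corollary \ref{corDelta} and Theorems \ref{theographes}--\ref{theofgwc}. The added direct combinatorial verification (cycles lie in one side of every ideal, in a single class of every $G$-compatible equivalence, and survive $\sqcup_\sigma$ and $S$) is a harmless supplement consistent with the paper's argument.
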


\subsection{quasiposets}

Let $X$ be a quasi-ordered alphabet. We consider the following quotients of $A'_q(X)$ and $\bfA'_q(X)$:
\begin{align*}
A''_q(X)&=\frac{A'_q(X)}{\langle x_{i,j}x_{j,k}(x_{i,k}-1), i\leq_X j\leq_X k\rangle},\\
\bfA''_q(X)&=\frac{\bfA'_q(X)}{\langle x_{i,j}x_{j,k}(x_{i,k}-1), i\leq_Xj\leq_X k\rangle}.
\end{align*}
The canonical surjection from $A'_q(X)$ to $A''_q(X)$ and from $\bfA'_q(X)$ to $\bfA''_q(X)$ are both denoted by $\varpi''_X$.

\begin{defi}
Let $G$ be a simple oriented graph, which set of vertices is denoted by $V(G)$. We define a quasi-order on $V(G)$, by:
\[\forall i,j\in V(G),\: i\leq_G j\mbox{ if there exists a path from $i$ to $j$ in $G$}.\] 
\end{defi}

Note that any quasi-poset, that is to say any pair $P=(V(P),\leq_P)$, where $V(P)$ is a finite set and $\leq_P$ is a quasi-order on $V(P)$,
can be obtained in this way: consider the arrow diagram $G$ of $P$, which is a simple graph such that $\leq_G=\leq_P$.

\begin{lemma}
Let $X$ be a quasi-ordered alphabet. For any simple graphs $G$, $H$, the following conditions are equivalent:
\begin{enumerate}
\item $\varpi''_X(M_G(X))=\varpi''_X(M_H(X))$ for any quasi-ordered alphabet $X$.
\item The quasi-posets $(V(G),\leq_G)$ and $(V(H),\leq_H)$ are isomorphic.
\end{enumerate}\end{lemma}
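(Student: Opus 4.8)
The plan is to follow the template of the two previous "quotient" lemmas (the one characterizing $S(G)=S(H)$ and the one for $A'_q$), showing both implications by working monomial‑by‑monomial. First I would unravel what the extra relations $x_{i,j}x_{j,k}(x_{i,k}-1)$ do to a monomial $M=\prod x_i^{\epsilon_i}\prod_{i\leq_X j}x_{i,j}^{\alpha_{i,j}}$ in $A'_q(X)$: modulo the ideal, whenever $x_{i,j}$ and $x_{j,k}$ both occur with the variables $i,j,k$ "present" (all $\epsilon=1$), one may freely insert $x_{i,k}$ — i.e. the set $\{(i,j):\alpha_{i,j}=1\}$ gets replaced, up to the ideal, by its transitive closure on the support $\{i:\epsilon_i=1\}$. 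So $\varpi''_X(M)$ depends only on the vertex set and the \emph{transitive closure} of the edge relation, which is exactly the relation $\leq_G$ restricted to those images. This is the key computational observation and also the main obstacle: one must be careful that the monomials of $M_G(X)$ are indexed by injections $\sigma:V(G)\hookrightarrow X$ and that the induced relation $i\leq_X j$ coming from admissibility does not accidentally create more identifications than the abstract quasi‑order $\leq_G$; handling this cleanly is where the argument needs care, and it is why one must choose the alphabet well in the second implication.

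For $2.\Longrightarrow 1.$, suppose $(V(G),\leq_G)\cong(V(H),\leq_H)$; identifying the two quasi‑posets we may assume $V(G)=V(H)=V$ and $\leq_G=\leq_H$. Write $M_G(X)=\sum_{\sigma\in\Lambda}(\text{monomial built from the edges of }G)$ and likewise for $H$, over a common set $\Lambda$ of injections $V\hookrightarrow X$ (the relevant $\Lambda$ is the set of injections compatible with whatever quasi‑order constraints both graphs impose; since the arrow diagrams of $G$ and $H$ have the same reachability relation these constraints coincide). Applying $\varpi''_X$ and the observation above, each summand becomes $\prod_{i\in V}x_{\sigma(i)}\prod_{i<_G j}x_{\sigma(i),\sigma(j)}$ — the same expression for both $G$ and $H$ — so $\varpi''_X(M_G(X))=\varpi''_X(M_H(X))$.

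For $1.\Longrightarrow 2.$, I would pick, via the last sentence of Lemma~\ref{graphessanscycles} (or its simple‑graph analogue), an ordered alphabet $X$ on which the relevant $M_G(X)$, $M_H(X)$ are nonzero, and in fact large enough that there is an injection $\sigma:V(G)\hookrightarrow X$ which is an \emph{isomorphism of quasi‑orders} onto its image (possible since one can embed any finite quasi‑order into, say, $\N$ with repeated values allowed — but here $X$ is ordered, so instead embed the associated poset $V(G)/\!\sim_G$ order‑isomorphically into $\N$ and lift; the point is that for this $\sigma$, $\sigma(i)\leq_X\sigma(j)$ iff $i\leq_G j$). Take a monomial $M$ of $M_G(X)$ coming from such a $\sigma$; then $\varpi''_X(M)=\prod_{i}x_{\sigma(i)}\prod_{i<_G j}x_{\sigma(i),\sigma(j)}$, and by hypothesis this monomial occurs in $\varpi''_X(M_H(X))$, hence comes from some monomial $M'$ of $M_H(X)$ with an injection $\tau:V(H)\hookrightarrow X$. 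Comparing supports forces $\sigma(V(G))=\tau(V(H))$, and comparing the transitive closures of the edge sets forces, for the vertices, $i<_G j \iff \tau^{-1}\sigma(i)<_H\tau^{-1}\sigma(j)$ whenever $\sigma(i),\sigma(j)$ lie in the common image; since $\sigma$ and $\tau$ were chosen to be quasi‑order isomorphisms onto that image, $\tau^{-1}\circ\sigma:V(G)\to V(H)$ is an isomorphism of quasi‑posets. (One checks the reflexive, diagonal part is automatic and the "$\leq$ vs $<$" bookkeeping is handled by the $x_{i,i}=1$ relation already imposed in $A'_q$.) This yields $2.$, completing the equivalence; as remarked before the lemma, the noncommutative statement with $\bfA''_q$, $\bfM_G$ follows by the identical argument, keeping track of the order on vertices.
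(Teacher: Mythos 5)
Your second implication has a genuine gap: you propose to prove $1.\Longrightarrow 2.$ by choosing an \emph{ordered} alphabet $X$ on which $M_G(X)$ and $M_H(X)$ are nonzero. But this lemma is about arbitrary simple graphs and quasi-posets, so $G$ may have cycles, and by the simple-graph analogue of Lemma \ref{graphessanscycles} we then have $M_G(X)=0$ for \emph{every} ordered alphabet; the very lemma you invoke to get nonzeroness says it is impossible in exactly the cases (nontrivial equivalence classes of $\sim_G$) that distinguish this statement from its later poset version. The same obstruction hits your choice of $\sigma$: if $i\neq j$ and $i\sim_G j$, an injective $\sigma$ into an antisymmetric alphabet can never satisfy $\sigma(i)\sim_X\sigma(j)$, and the suggested ``embed $V(G)/\!\sim_G$ into $\N$ and lift'' either destroys injectivity or loses the equivalences. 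The repair is easy but changes the construction: take instead the quasi-ordered alphabet of Theorem \ref{theoprod} (an infinite set with $x\leq_X y$ for all $x,y$), on which every $M_G(X)$ is nonzero; then read $\leq_G$ and $\leq_H$ off the transitive closures of the edge sets of the matching monomials $M$, $M'$ (your ``comparing transitive closures'' step), without ever asking $\sigma,\tau$ to be order isomorphisms onto their common image.

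Two smaller points. First, the canonical form after applying $\varpi''_X$ should be $\prod_i x_{\sigma(i)}\prod_{i\neq j,\ i\leq_G j}x_{\sigma(i),\sigma(j)}$ rather than a product over $i<_G j$: with the paper's convention for $<$, distinct equivalent vertices satisfy neither $i<_G j$ nor $j<_G i$, yet both $x_{\sigma(i),\sigma(j)}$ and $x_{\sigma(j),\sigma(i)}$ occur in the closure. Second, your core observation (that modulo the ideal a monomial equals the one with transitively closed edge set, so $\varpi''_X(M)$ only depends on the reachability relation on the support) is exactly the paper's argument, which phrases it as a congruence $x_{i,j}x_{j,k}x_{i,k}\equiv x_{i,j}x_{j,k}$ and the triangle move on graphs; so once the alphabet choice in $1.\Longrightarrow 2.$ is fixed as above, your proof coincides in substance with the paper's.
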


\begin{proof}
We define a congruence on the set of monomials of $A'_q(X)$ by $x_{i,j}x_{j,k}x_{i,k}\equiv x_{i,j}x_{j,k}$.
Then two monomials of $A'_q(X)$ have the same image under $\varpi''_X$ if, and only if, they are congruent.
At the level of graphs, this gives that for any quasi-ordered alphabet $X$, $\varpi''_X(M_G(X))=\varpi''_X(M_H(X))$ if, and only if, one can go
from $G$ to $H$ by a sequence of transformations:
\[(\xymatrix{i\ar[r]\ar@/^1pc/[rr]&j\ar[r]&k}) \longleftrightarrow (\xymatrix{i\ar[r]&j\ar[r]&k})\]
that is to say if, and only if, $\leq_G$ and $\leq_H$ are isomorphic. \end{proof}

\begin{prop}
\begin{enumerate}
\item Let $X,Y$ be two quasi-ordered alphabets. 
There exists a unique algebra morphism $\Delta_{X,Y}:A''_q(X\sqcup Y)\longrightarrow A''_q(X)\otimes A''_q(Y)$
such that  the following diagram commutes:
\begin{align*}
&\xymatrix{A'_q(X\sqcup Y)\ar[d]_{\varpi'_{X\sqcup Y}} \ar[r]^{\Delta_{X,Y}}&A'_q(X)\otimes A'_q(Y)\ar[d]^{\varpi'_X\otimes \varpi'_Y}\\
A''_q(X\sqcup Y)\ar[r]_{\Delta_{X,Y}}&A''_q(X)\otimes A''_q(Y)}
\end{align*}
\item The same assertion hold, after replacing $A_q$ and $A'_q$ by $\bfA_q$ and $\bfA'_q$ everywhere.
\end{enumerate}\end{prop}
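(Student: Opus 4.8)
The plan is to show that the algebra morphism $\Delta_{X,Y}\colon A'_q(X\sqcup Y)\to A'_q(X)\otimes A'_q(Y)$ built in the earlier proposition on the primed algebras descends to the double quotients. Write $J(Z)\subseteq A'_q(Z)$ for the closed ideal generated by the elements $x_{i,j}x_{j,k}(x_{i,k}-1)$ with $i\leq_Z j\leq_Z k$, so that $A''_q(Z)=A'_q(Z)/J(Z)$ and $\varpi''_Z$ is the projection. The kernel of $\varpi''_X\otimes\varpi''_Y$ is the closed ideal $K:=J(X)\otimes A'_q(Y)+A'_q(X)\otimes J(Y)$ (the usual description of a tensor product of quotients, valid here for the completed tensor product since the relevant rings are complete for their filtration by the order of formal series), and $A''_q(X)\otimes A''_q(Y)=(A'_q(X)\otimes A'_q(Y))/K$. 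Since $\varpi''_{X\sqcup Y}$ is a continuous surjection, a morphism making the square commute exists and is unique as soon as $\Delta_{X,Y}$ maps $J(X\sqcup Y)$ into $K$; and as $\Delta_{X,Y}$ is a continuous algebra morphism while $J(X\sqcup Y)$ is topologically generated by the $x_{i,j}x_{j,k}(x_{i,k}-1)$, it suffices to check $\Delta_{X,Y}\bigl(x_{i,j}x_{j,k}(x_{i,k}-1)\bigr)\in K$ for each chain $i\leq_{X\sqcup Y}j\leq_{X\sqcup Y}k$.

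Then I run the case analysis. In $X\sqcup Y$ every letter of $X$ precedes every letter of $Y$, and a letter of $Y$ can only be followed by letters of $Y$; hence a chain $i\leq j\leq k$ has one of the four position types $(X,X,X)$, $(X,X,Y)$, $(X,Y,Y)$, $(Y,Y,Y)$. For type $(X,X,X)$ the formulas for $\Delta_{X,Y}$ on $A'_q$ give $\Delta_{X,Y}\bigl(x_{i,j}x_{j,k}(x_{i,k}-1)\bigr)=x_{i,j}x_{j,k}(x_{i,k}-1)\otimes 1\in J(X)\otimes A'_q(Y)\subseteq K$, and symmetrically for $(Y,Y,Y)$. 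For types $(X,X,Y)$ and $(X,Y,Y)$ one has $i\in X$ and $k\in Y$, so $\Delta_{X,Y}(x_{i,k})=1\otimes 1$, hence $\Delta_{X,Y}(x_{i,k}-1)=0$ and the whole product already vanishes in $A'_q(X)\otimes A'_q(Y)$. This exhausts all cases, so $\Delta_{X,Y}$ factors through $\varpi''$, yielding the (unique, continuous) morphism $\Delta_{X,Y}\colon A''_q(X\sqcup Y)\to A''_q(X)\otimes A''_q(Y)$; the displayed square commutes by construction.

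For part (2) the argument is identical: in $\bfA_q(Z)$ and its quotients the variables $x_{i,j},x_{-\infty,j},x_{i,\infty}$ are adjoined as commuting formal parameters over the free algebra on the $x_i$, hence central, so the defining relation of $\bfA''_q(X\sqcup Y)$ is the same expression in these central variables and $\Delta_{X,Y}$ acts on them by the same formulas; the four-case computation is unchanged. I do not expect a genuine obstacle here, this being essentially an immediate verification as for the neighbouring propositions; the only points that need a little care are identifying $\ker(\varpi''_X\otimes\varpi''_Y)$ correctly as $K$ and checking compatibility with the formal-series topology, and making sure the case analysis is exhaustive — in particular that a chain entering $Y$ cannot return to $X$, which is precisely what rules out the remaining position types.
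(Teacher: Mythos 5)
Your proposal is correct and follows essentially the same route as the paper: both reduce to checking that $\Delta_{X,Y}$ sends each generator $x_{i,j}x_{j,k}(x_{i,k}-1)$ of the defining ideal into the kernel of $\varpi''_X\otimes\varpi''_Y$, with the same case split (all three letters in $X$, all in $Y$, or $i\in X$, $k\in Y$, where $\Delta_{X,Y}(x_{i,k})=1\otimes 1$ kills the term). Your extra care in identifying the kernel as $J(X)\otimes A'_q(Y)+A'_q(X)\otimes J(Y)$ and noting centrality of the edge variables in the noncommutative case only makes explicit what the paper leaves as an immediate verification.
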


\begin{proof} 1. If $i\leq_{X\sqcup Y} j\leq_{X\sqcup Y} k$, in $A'_q(X\sqcup Y)$:
\begin{align*}
\Delta_{X,Y}(x_{i,j}x_{j,k}(x_{j,k}-1))&=\begin{cases}
x_{i,j}x_{j,k}(x_{j,k}-1)\otimes 1\mbox{ if }i,k\in X,\\
1\otimes x_{i,j}x_{j,k}(x_{j,k}-1)\mbox{ if }i,k\in Y,\\
0\mbox{ if }i\in i\in X,k\in Y.
\end{cases}\end{align*}
So $\Delta_{X,Y}$ is defined from $A''_q(X\sqcup Y)$ to $A''_q(X)\otimes A''_q(Y)$.\\

2. Similar proof. \end{proof}

\begin{remark} \label{remarquecoproduit}
Unfortunately, this does not work for $\delta_{X,Y}$, except if $X$ is a totally ordered alphabet.
\end{remark}

For any quasi-poset $P$, we denote $M_P(X)=\varpi''_X(M_G(X))$ and $\bfM_G(X)=\varpi''_X(\bfM_G(X))$, 
where $G$ is any simple graph such that $\leq_G=\leq_P$, for example the arrow graph or the Hasse graph of $\leq_P$.
These elements, if all non-zero, are a basis of a subalgebra of $A''_q(X)$ or $\bfA''_q(X)$.
We obtain a quotient of $H_\sg$ and $\bfH_\sg$ based on quasi-posets. We shall need the following definitions to describe it:

\begin{defi}
Let $P,Q$ be two quasi-posets, $A\subseteq V(P)$ and $\sigma:A\hookrightarrow V(Q)$ be an injective map. We consider the quotient,
already used in Definition \ref{defi20}:
\[V(P)\sqcup_\sigma V(Q)=(V(P)\sqcup V(Q))/(a=\sigma(a),a\in A).\]
We define a relation $\mathcal{R}$ on $V(P) \sqcup V(Q)$ by:
\[\forall i,j\in V(P)\sqcup_\sigma V(Q),\: i\mathcal{R}j \mbox{ if } (i,j\in V(P), i\leq_P j)\mbox{ or }(i,j\in V(Q), i\leq_Q j).\]
The transitive closure of $\mathcal{R}$ is denoted by $\leq_{P\sqcup_\sigma Q}$, 
and $P\sqcup_\sigma Q=(V(P)\sqcup_\sigma V(Q),\leq_{P\sqcup_\sigma Q})$ is a quasi-poset.
\end{defi}

Note that if $P$ and $Q$ are ordered quasi-posets, then $P\sqcup_\sigma Q$ is also an ordered quasi-poset.

\begin{defi}
Let $P$ be a quasi-poset and let $A\subseteq V(P)$.
\begin{enumerate}
\item We denote by $P_{\mid A}$ the quasi-poset $(A,(\leq_P)_{\mid A})$. Note that if $P$ is a poset, $P_{\mid A}$ is too.
\item We shall say that $A$ is an ideal (or an open set) of $P$ if:
\begin{align*}
&\forall i,j\in V(P),&(i\in A\mbox{ and }i\leq_P j)\Longrightarrow j\in A.
\end{align*}
The set of ideals of $P$ is denoted by $I(P)$.
\end{enumerate}
\end{defi}

\begin{theo}
Let $H_{\qp}$ be the vector space generated by the set of quasi-posets and let $q\in \K$. 
\begin{enumerate}
\item It is given a Hopf algebra structure: for any quasi-posets $P,Q$,
\begin{align}
\label{E15}P._qQ&=\sum_{\sigma:V(P)\supseteq A\hookrightarrow V(Q)} q^{|A|}P\sqcup_\sigma Q,\\
\label{E16}\Delta(G)&=\sum_{A\in I(P)}P_{\mid V(P)\setminus A}\otimes P_{\mid A}.
\end{align}
\item Let us consider the map:
\[P:\left\{\begin{array}{rcl}
H_\sg&\longrightarrow&H_\qp\\
G&\longrightarrow&p(G)=(V(G),\leq_G)
\end{array}\right.\]
It is a Hopf algebra morphism from $(H_\sg,._q,\Delta)$ to $(H_\qp,._q,\Delta)$.
\end{enumerate}\end{theo}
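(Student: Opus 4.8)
\textit{Plan of proof.} The strategy is the same as for Corollary~\ref{corDelta} and Theorem~\ref{theographes}: transport everything along the polynomial realizations. By the lemma preceding the statement, for any quasi-ordered alphabet $X$ the element $M_P(X)=\varpi''_X(M_G(X))$ does not depend on the choice of a simple graph $G$ with $\leq_G=\leq_P$, so we obtain a well-defined linear map $\Theta_X:H_\qp\to A''_q(X)$, $P\mapsto M_P(X)$, together with $\Theta^\sg_X:H_\sg\to A'_q(X)$, $G\mapsto M_G(X)$, satisfying $\Theta_X\circ P=\varpi''_X\circ\Theta^\sg_X$. As in the proof of Theorem~\ref{theoprod}, one exhibits a quasi-ordered alphabet $X$ for which these realizations are faithful: for instance one whose quotient by $\sim_X$ contains an induced copy of every finite poset and all of whose $\sim_X$-classes are infinite, so that every quasi-poset can be realized in infinitely many ways with pairwise disjoint supports; for such an $X$, $\Theta_X$ is injective, hence so are $\Theta_X\otimes\Theta_X$ and $\Theta_X\otimes\Theta_X\otimes\Theta_X$ over $\K$.

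The heart of the argument is to push the product and coproduct formulas of Theorem~\ref{theographes} through the algebra morphism $\varpi''_X$ (and $\varpi''_X\otimes\varpi''_Y$), which by the Propositions of this subsection intertwines $\Delta_{X,Y}$ on $A'_q$ with $\Delta_{X,Y}$ on $A''_q$. This reduces the claim to two combinatorial identities on quasi-posets. First: if $\leq_G=\leq_P$ and $\leq_H=\leq_Q$ and $\sigma:V(P)\supseteq A\hookrightarrow V(Q)$, then the reachability quasi-order of $G\sqcup_\sigma H$ is $\leq_{P\sqcup_\sigma Q}$; indeed each edge of $G$ (resp.\ of $H$) is an instance of $\mathcal{R}$, so a path in $G\sqcup_\sigma H$ produces an $\mathcal{R}$-chain and conversely, whence reachability there is exactly the transitive closure of $\mathcal{R}$, and since $S$ changes neither loops nor reachability between distinct vertices, $p(S(G\sqcup_\sigma H))=P\sqcup_\sigma Q$. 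Second: $I(G)=I(P)$, because a set of vertices is closed under edges iff it is closed under $\leq_P$ (the latter being the transitive closure of the edge relation); and for $A\in I(P)$, any path of $G$ joining two vertices of $A$ stays inside $A$, while any path joining two vertices of $V(P)\setminus A$ stays inside $V(P)\setminus A$, so that $\leq_{G_{\mid A}}=(\leq_P)_{\mid A}$ and $\leq_{G_{\mid V(G)\setminus A}}=(\leq_P)_{\mid V(P)\setminus A}$, i.e.\ $p(S(G_{\mid A}))=P_{\mid A}$ and $p(S(G_{\mid V(G)\setminus A}))=P_{\mid V(P)\setminus A}$. Combined with Theorem~\ref{theographes} these identities give
\begin{align*}
\Theta_X(P)\,\Theta_X(Q)&=\Theta_X(P._qQ),&
(\Theta_X\otimes\Theta_Y)\circ\Delta&=\Delta_{X,Y}\circ\Theta_{X\sqcup Y},
\end{align*}
and simultaneously show that the surjection $P:H_\sg\to H_\qp$ carries $._q$ to $._q$ and $\Delta$ to $\Delta$.

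It remains to conclude. Since $A''_q(X)$ is associative, $\Delta_{X,Y}$ on $A''_q$ is an algebra morphism, and (pushing Proposition~\ref{double} through $\varpi''$) $\Delta_{X,Y}$ is coassociative, the injectivity of $\Theta_X$, $\Theta_X\otimes\Theta_X$ and $\Theta_X^{\otimes 3}$ forces $._q$ to be associative on $H_\qp$, $\Delta$ to be an algebra morphism for $._q$, and $\Delta$ to be coassociative; the counit sends the empty quasi-poset to $1$ and every other quasi-poset to $0$. As $P._qQ$ is a sum of quasi-posets on $|V(P)|+|V(Q)|-|A|\le|V(P)|+|V(Q)|$ vertices, $(H_\qp,._q,\Delta)$ is filtered by the number of vertices (graded when $q=0$) and connected, hence a Hopf algebra; and the bialgebra morphism $P$ between connected filtered bialgebras is automatically a Hopf algebra morphism. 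The only genuine work is the two combinatorial identities above — in particular identifying reachability in $G\sqcup_\sigma H$ with the transitive closure of $\mathcal{R}$ and checking that ideals and their complements are path-closed — together with producing one faithful alphabet; note that, as warned in Remark~\ref{remarquecoproduit}, the second coproduct $\delta$ does \emph{not} descend to $H_\qp$ for general quasi-ordered alphabets, which is why no statement about $\delta$ appears here.
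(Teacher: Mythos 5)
Your proposal is correct and follows exactly the route the paper intends (and leaves implicit): realize quasi-posets via $\varpi''_X$, use the preceding lemma for well-definedness and faithfulness of $\Theta_X$, push the product and coproduct formulas of Theorem \ref{theographes} through the quotient by checking that reachability in $G\sqcup_\sigma H$ is the transitive closure of $\mathcal{R}$ and that ideals and their complements are path-closed, and conclude by injectivity plus connected filtration. Nothing essential is missing, and your remark that $\delta$ does not descend matches Remark \ref{remarquecoproduit}.
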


Here is its non-commutative version:

\begin{theo}
Let $\bfH_{\qp}$ be the vector space generated by the set of ordered quasi-posets and let $q\in \K$. 
\begin{enumerate}
\item It is given a Hopf algebra structure by (\ref{E15})-(\ref{E16}).
\item Let us consider the map:
\[P:\left\{\begin{array}{rcl}
\bfH_\sg&\longrightarrow&\bfH_\qp\\
G&\longrightarrow&p(G)=(V(G),\leq_G)
\end{array}\right.\]
It is a Hopf algebra morphism from $(\bfH_\sg,._q,\Delta)$ to $(\bfH_\qp,._q,\Delta)$.
\end{enumerate}\end{theo}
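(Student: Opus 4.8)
The plan is to run the argument of the preceding (commutative) theorem verbatim, working in the noncommutative realization $\bfA''_q(X)$ with the surjection $\varpi''_X:\bfA'_q(X)\to\bfA''_q(X)$ in place of $A''_q(X)$ and its surjection. First I would fix, for each ordered quasi-poset $P$, a canonical ordered simple graph $G_P$ on the same ordered vertex set with $\leq_{G_P}=\leq_P$ (for instance the Hasse graph), and set $\bfM_P(X)=\varpi''_X(\bfM_{G_P}(X))\in\bfA''_q(X)$. The independence of this from the choice of $G_P$ follows from the noncommutative analogue of the quasi-poset identification lemma: its proof is the one already given, since the relevant congruence $x_{i,j}x_{j,k}x_{i,k}\equiv x_{i,j}x_{j,k}$ touches only edge variables and is insensitive to the non-commutativity of the $x_i$. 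Thus $\bfH_\qp(X):=\mathrm{Vect}(\bfM_P(X)\mid P\text{ ordered quasi-poset})$ is a well-defined subspace of $\bfA''_q(X)$.

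Next I would derive the two structure formulas at the alphabet level. Applying $\varpi''_X$ to the product identity of the noncommutative version of Theorem~\ref{theographes}, $\bfM_G(X)\bfM_H(X)=\sum_\sigma q^{|A|}\bfM_{S(G\sqcup_\sigma H)}(X)$, and observing that $\leq_{S(G\sqcup_\sigma H)}$ is by construction the transitive closure of the union relation, i.e. $p(S(G\sqcup_\sigma H))=P\sqcup_\sigma Q$ as ordered quasi-posets, gives the product formula (\ref{E15}) and in particular shows $\bfH_\qp(X)$ is a subalgebra. For the coproduct, the Proposition of this subsection provides an algebra morphism $\Delta_{X,Y}:\bfA''_q(X\sqcup Y)\to\bfA''_q(X)\otimes\bfA''_q(Y)$; composing with $\varpi''_{X\sqcup Y}$ and using the simple-graph coproduct formula, it only remains to check that the ideals of $G_P$ are exactly the ideals of the quasi-poset $P$ and that $p((G_P)_{\mid A})=P_{\mid A}$, both immediate from the definitions. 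This yields (\ref{E16}). (No second coproduct is asserted here, consistently with Remark~\ref{remarquecoproduit}, where it is noted that $\delta_{X,Y}$ does not descend in general.)

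Then, exactly as in Corollary~\ref{corDelta}, I would introduce the realization maps $\Theta_X:\bfH_\qp\to\bfH_\qp(X)$, $P\mapsto\bfM_P(X)$, observe that the two formulas above say precisely that $\Theta_X$ is multiplicative and $(\Theta_X\otimes\Theta_Y)\circ\Delta=\Delta_{X,Y}\circ\Theta_{X\sqcup Y}$, and transfer associativity of $._q$, coassociativity of $\Delta$ and the bialgebra compatibilities from the identities already valid in $\bfA''_q$, using quasi-ordered alphabets $X$, $Y$, $Z$ for which $\Theta_X$, $\Theta_X\otimes\Theta_Y$ and $\Theta_X\otimes\Theta_Y\otimes\Theta_Z$ are injective. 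The filtration by the number of vertices of $P$ (graded when $q=0$), with one-dimensional degree-zero component, then upgrades the bialgebra to a Hopf algebra. The same diagram chase shows $\Theta_X\circ P=\varpi''_X\circ\Theta^{\bfH_\sg}_X$, so $P:\bfH_\sg\to\bfH_\qp$ is compatible with $._q$ and $\Delta$, i.e. a Hopf algebra morphism; this is point 2.

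The main obstacle is the injectivity input: exhibiting a quasi-ordered alphabet $X$ for which $(\bfM_P(X))_P$ is linearly independent in $\bfA''_q(X)$, and likewise over disjoint unions for the tensor powers. I would handle this by imitating the constructions of Theorem~\ref{theoprod} and Lemma~\ref{graphessanscycles}: take $X$ large enough and, for each ordered quasi-poset $P$, choose an injection $V(P)\hookrightarrow X$ realizing $\leq_P$ together with a monomial whose leading term survives all the defining relations of $\bfA''_q(X)$ --- namely $x_{i,j}^2=x_{i,j}$, $x_{i,i}=1$, $x_{i,\infty}=x_{-\infty,j}=1$ and $x_{i,j}x_{j,k}x_{i,k}=x_{i,j}x_{j,k}$ --- so that non-isomorphic classes land on monomials with disjoint supports, whence linear independence. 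Everything else is a routine transcription of the commutative proof, the ordered structure being recorded solely by the order of the $x_i$'s, which never interacts with the edge relations.
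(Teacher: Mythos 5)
Your proposal is correct and follows essentially the same route as the paper, which leaves this noncommutative case to the same machinery: realize ordered quasi-posets by $\bfM_P(X)=\varpi''_X(\bfM_G(X))$ in $\bfA''_q(X)$, transfer product and coproduct through the realization maps as in Corollary \ref{corDelta}, use a suitable infinite quasi-ordered alphabet for injectivity, and conclude by the connected filtration; the morphism property of $P$ then follows from the intertwining with $\varpi''_X$.
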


The Hopf algebras $(H_\qp,._0,\Delta)$ and $\bfH_\qp,._0,\Delta)$ are introduced and studied in \cite{FM,FMP1,FMP2,Foissy2}.

\subsection{Posets}

\begin{lemma}
Let $P$ be a quasi-poset. The following conditions are equivalent:
\begin{enumerate}
\item For any ordered alphabet $X$, $M_P(X)=0$.
\item $P$ is a poset.
\end{enumerate}
Moreover, there exists an ordered alphabet $X$, such that for any poset $P$, $M_P(X)\neq 0$.
\end{lemma}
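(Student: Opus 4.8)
First I would point out that condition $2$ must be read as ``$P$ is \emph{not} a poset'' (equivalently, $\leq_P$ is not antisymmetric, i.e.\ there are distinct $i,j$ with $i\sim_P j$), since read literally it contradicts the ``moreover'' clause; this is the quasi-poset analogue of ``having a cycle'' in Lemma~\ref{graphessanscycles}. The plan is then to carry the arguments of Lemma~\ref{graphessanscycles} and of its simple-graph version through the surjections $\varpi'_X$ and $\varpi''_X$, using the arrow graph $G$ of $P$ (a simple oriented graph with $\leq_G=\leq_P$) as a concrete model, so that by definition $M_P(X)=\varpi''_X(M_G(X))$.

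For $2.\Rightarrow 1.$: if $P$ is not a poset, choose distinct $i,j\in V(P)$ with $i\leq_P j$ and $j\leq_P i$. Then the arrow graph $G$ contains both edges $i\to j$ and $j\to i$, hence a cycle of length two, so by the simple-graph version of Lemma~\ref{graphessanscycles} one has $M_G(X)=0$ for every ordered alphabet $X$; applying $\varpi''_X$ gives $M_P(X)=0$, which is condition $1$.

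For $1.\Rightarrow 2.$ it suffices to prove the ``moreover'' part, since it exhibits, for every poset $P$, an ordered alphabet on which $M_P$ does not vanish. I would take $X=\N$ with its usual order. If $P$ is a poset, its arrow graph $G$ is a transitively closed simple oriented graph with no cycle, so the argument of Lemma~\ref{graphessanscycles} (pick a total order on $V(G)$ compatible with the edges and write down the corresponding admissible monomial) shows $M_G(\N)\neq 0$. It remains to see that $\varpi''_\N$ does not kill $M_G(\N)$. Here I would use that $\varpi''_X$ sends monomials to monomials, two monomials having equal image exactly when they are congruent under $x_{i,j}x_{j,k}x_{i,k}\equiv x_{i,j}x_{j,k}$, together with the observation that each congruence class contains a \emph{unique} monomial whose $x_{\cdot,\cdot}$-edge set is transitively closed, that set being the common transitive closure of all members of the class. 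Every monomial occurring in $M_G(\N)$ has transitively closed edge set, since applying $fg$ to it yields the arrow graph $G$; hence $\varpi''_\N$ is injective on the support of $M_G(\N)$, so $\varpi''_\N(M_G(\N))$ is again a sum of pairwise distinct monomials each with coefficient $1$, and is therefore nonzero. Thus $M_P(\N)\neq 0$.

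The only step that is not a mechanical translation of earlier results is the claim in the last paragraph that $\varpi''_\N$ preserves the support of $M_G(\N)$. I would isolate it as a small combinatorial fact: inserting a ``shortcut'' edge $i\to k$ in the presence of $i\to j$ and $j\to k$ does not change the transitive closure of the edge set, and from any monomial one reaches the unique transitively closed monomial of its class by inserting all implied shortcuts; hence two transitively closed monomials are congruent iff equal. Everything else — the cycle argument, the existence of a compatible total order, the explicit monomial over $\N$ — is imported verbatim from Lemma~\ref{graphessanscycles}.
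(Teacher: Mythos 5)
Your proof is correct and takes exactly the route the paper intends: the paper's own proof is the one-line remark that it is ``similar to the proof of Lemma \ref{graphessanscycles}'', and you have spelled this out faithfully, including the two points the paper leaves implicit --- that condition 2 must indeed be read as ``$P$ is \emph{not} a poset'' (otherwise the statement contradicts its own ``moreover'' clause), and that the further quotient $\varpi''_{\N}$ does not annihilate $M_G(\N)$. Your justification of the latter (each congruence class for $x_{i,j}x_{j,k}x_{i,k}\equiv x_{i,j}x_{j,k}$ has a unique transitively closed representative, and every monomial in the support of $M_G(\N)$ is transitively closed because $G$ is the arrow graph of a poset) is sound and is precisely the detail needed beyond Lemma \ref{graphessanscycles}.
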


\begin{proof} Similar to the proof of Lemma \ref{graphessanscycles}. \end{proof}

Consequently, we obtain a Hopf algebra structure on posets. 

\begin{theo} \label{theoposets}
Let $H_\p$ be the vector space generated by the set of posets and let $q\in \K$. 
\begin{enumerate}
\item It is given a Hopf algebra structure: for any posets $P$ and $Q$,
\begin{align}
\label{E17}P._qQ&=\sum_{\substack{\sigma:V(P)\supseteq A\hookrightarrow V(Q),\\ \mbox{\scriptsize $P\sqcup_\sigma Q$ poset}}}
q^{|A|} S(P\sqcup_\sigma Q),\\
\label{E18}\Delta(P)&=\sum_{A\in I(P)}P_{\mid V(P)\setminus A}\otimes P_{\mid A}.
\end{align}
\item Let us consider the map:
\[P:\left\{\begin{array}{rcl}
H_\ncsg&\longrightarrow&H_\p\\
G&\longrightarrow&P(G)=(V(G),\leq_G).
\end{array}\right.\]
It is a Hopf algebra morphism from $(H_\ncsg,._q,\Delta)$ to $(H_\p,._q,\Delta)$.
\end{enumerate}\end{theo}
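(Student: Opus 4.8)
The plan is to realize $H_\p$ as a sub-bialgebra of the algebras $A''_q(X)$ attached to \emph{ordered} alphabets, mimicking word for word the proofs of Corollary \ref{corDelta} and of Theorems \ref{theofgwc} and \ref{theosgwc}. For each ordered alphabet $X$ I would introduce the linear map $\Theta_X\colon H_\p\to A''_q(X)$ sending a poset $P$ to $M_P(X)$. By the Lemma preceding the statement, $\Theta_X$ annihilates every proper quasi-poset, and there is an ordered alphabet $X_0$ for which the elements $M_P(X_0)$, $P$ a poset, are linearly independent; hence $\Theta_{X_0}$ is injective. Since $X\sqcup Y$ and $XY$ are again ordered alphabets whenever $X$ and $Y$ are (as noted in Section~1), the products of the $A''_q$'s, the coproducts $\Delta_{X,Y}\colon A''_q(X\sqcup Y)\to A''_q(X)\otimes A''_q(Y)$ of the Proposition of the subsection on quasi-posets, and the coassociativity datum inherited from Proposition \ref{double} are all available on ordered alphabets.

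Next I would transport the product and coproduct identities of the quasi-poset Theorem along the maps $\Theta_X$ and simply discard the summands killed by $\Theta_X$. In the product $P._qQ=\sum_\sigma q^{|A|}\,P\sqcup_\sigma Q$ of $H_\qp$, the terms with $P\sqcup_\sigma Q$ not a poset vanish under $\Theta_X$ (again by the Lemma), which is precisely why the induced product on $H_\p$ takes the restricted form (\ref{E17}); in $\Delta(P)=\sum_{A\in I(P)}P_{\mid V(P)\setminus A}\otimes P_{\mid A}$ \emph{no} term disappears, since every restriction of a poset is a poset, so $\Delta$ is literally (\ref{E18}). Thus the $\Theta_X$ intertwine $(._q,\Delta)$ with the algebra structure and the maps $\Delta_{X,Y}$ of the $A''_q$'s. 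Picking ordered alphabets $X,Y,Z$ for which $\Theta_X$, $\Theta_Y$, $\Theta_Z$ (hence also $\Theta_X\otimes\Theta_Y$ and $\Theta_X\otimes\Theta_Y\otimes\Theta_Z$) are injective, the computation of Corollary \ref{corDelta} goes through verbatim: associativity and multiplicativity of $\Delta$ come from the corresponding facts in $A''_q(X)$, and coassociativity from Proposition \ref{double}, all pulled back along the injective maps. As $(H_\p,._q,\Delta)$ is filtered by the number of vertices — $P\sqcup_\sigma Q$ has $|V(P)|+|V(Q)|-|A|$ vertices, so the product never raises the degree — graded when $q=0$, and its degree-$0$ component is reduced to $\K$, it is a connected bialgebra, hence a Hopf algebra. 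This proves part~1.

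For part~2 I would use the canonical surjection $\varpi''_X\colon A'_q(X)\to A''_q(X)$, which is an algebra morphism and commutes with the maps $\Delta_{X,Y}$ by the relevant Propositions of the two preceding subsections. By the definition of $M_P(X)$ for a quasi-poset $P$, one has $\varpi''_X(M_G(X))=M_{P(G)}(X)$ for every simple graph $G$ with no cycle; that is, $\varpi''_X\circ\Theta^{\ncsg}_X=\Theta^{\p}_X\circ P$, where $\Theta^{\ncsg}_X$ and $\Theta^{\p}_X$ denote the realizations of $H_\ncsg$ and $H_\p$ on $X$. Since $\varpi''_X$ is an algebra morphism compatible with the maps $\Delta_{X,Y}$ and the $\Theta^{\p}_X$ are injective for a suitable ordered $X$ (and for ordered $X,Y$ with $X\sqcup Y$ ordered), it follows that $P\colon(H_\ncsg,._q,\Delta)\to(H_\p,._q,\Delta)$ is a morphism of bialgebras, hence --- being a morphism of graded connected Hopf algebras --- of Hopf algebras.

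The only point requiring genuine attention is already packaged in the Lemma preceding the statement, namely that passing to ordered alphabets collapses exactly the span of proper quasi-posets; granting this, the whole theorem is a mechanical repetition of the earlier ones. The mild compatibilities one keeps in mind are that the summands surviving in $P._qQ$ --- those with $P\sqcup_\sigma Q$ a poset --- are exactly the ones not annihilated by $\Theta_X$, and, for part~2, that under $P$ the condition ``$G\sqcup_\sigma H$ has no cycle'' corresponds to ``$P(G)\sqcup_\sigma P(H)$ is a poset'', both being immediate from the fact that a simple graph and its transitive closure have the same reachability quasi-order.
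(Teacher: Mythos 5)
Your proposal is correct and follows essentially the same route as the paper: the theorem is obtained exactly by restricting the quasi-poset realization $P\mapsto M_P(X)$ in $A''_q(X)$ to \emph{ordered} alphabets, using the preceding Lemma (whose intended content, as you read it, is that $M_P(X)$ vanishes for all ordered $X$ precisely when the quasi-poset $P$ is \emph{not} a poset, with linear independence of the $M_P(X)$ for posets on a suitable ordered $X$), and then transporting product, coproduct and the morphism $\varpi''_X$-compatibility along the injective maps $\Theta_X$, just as in Corollary \ref{corDelta} and Theorems \ref{theofgwc} and \ref{theosgwc}. No gaps; your handling of the discarded non-poset summands in (\ref{E17}) and of the intertwining $\varpi''_X\circ\Theta^{\ncsg}_X=\Theta^{\p}_X\circ P$ for part 2 matches the paper's intended argument.
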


Here is its non-commutative version:

\begin{theo}
Let $\bfH_\p$ be the vector space generated by the set of ordered posets and let $q\in \K$. 
\begin{enumerate}
\item It is given a Hopf algebra structure by (\ref{E17})-(\ref{E18}).
\item Let us consider the map:
\[P:\left\{\begin{array}{rcl}
\bfH_\ncsg&\longrightarrow&\bfH_\p\\
G&\longrightarrow&P(G)=(V(G),\leq_G).
\end{array}\right.\]
It is a Hopf algebra morphism from $(\bfH_\ncsg,._q,\Delta)$ to $(\bfH_\p,._q,\Delta)$.
\end{enumerate}\end{theo}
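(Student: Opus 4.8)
The plan is to mirror, step by step, the proof scheme already carried out for $H_\fg$ in Corollary \ref{corDelta} and for $H_\sg$ in Theorem \ref{theographes}, transporting everything through the chain of quotients $A_q(X)\twoheadrightarrow A'_q(X)\twoheadrightarrow A''_q(X)$ and the corresponding surjections $\varpi'_X,\varpi''_X$. For part (1), fix an ordered alphabet $X$ (not merely quasi-ordered) large enough that, by the Lemma just proved, $M_P(X)\neq 0$ for every poset $P$; since distinct posets have disjoint supports in $A''_q(X)$, the linear map $\Theta_X\colon H_\p\to A''_q(X)$, $P\mapsto M_P(X)$, is injective. The product formula (\ref{E17}) is obtained by combining the product formula (\ref{E9}) of Theorem \ref{theographes} with the fact that $\varpi''_X$ kills exactly the transitivity relations, so that $S(G\sqcup_\sigma H)$ projects to the quasi-poset $P\sqcup_\sigma Q$, and then observing that when $X$ is ordered the terms coming from a $\sigma$ for which $P\sqcup_\sigma Q$ is not a poset vanish (this is precisely the cycle-vanishing Lemma applied to the arrow graph). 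The coproduct formula (\ref{E18}) comes the same way from (\ref{E10}): restriction to an ideal commutes with passing to the induced quasi-order, and an ideal of the arrow graph is exactly an ideal (open set) of the quasi-poset, while $P_{\mid A}$ is automatically a poset when $P$ is.

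Next I would check $H_\p$ is a bialgebra. Because $\Theta_X$ is an algebra morphism (the product on $H_\p$ being \emph{defined} by the image computation above) and because the doubling map $\Delta_{X,Y}\colon A''_q(X\sqcup Y)\to A''_q(X)\otimes A''_q(Y)$ exists by the Proposition preceding the theorem, we get the two compatibilities
\[
\Theta_X(a)\Theta_X(b)=\Theta_X(a\cdot_q b),\qquad
(\Theta_X\otimes\Theta_Y)\circ\Delta=\Delta_{X,Y}\circ\Theta_{X\sqcup Y},
\]
exactly as in the proof of Corollary \ref{corDelta}. Multiplicativity of $\Delta$, coassociativity (from $(X\sqcup Y)\sqcup Z=X\sqcup(Y\sqcup Z)$ together with Proposition \ref{double} pushed through $\varpi'$ and $\varpi''$), and counitality are then formal consequences of the injectivity of $\Theta_X$, $\Theta_X\otimes\Theta_Y$ and $\Theta_X\otimes\Theta_Y\otimes\Theta_Z$ for suitably chosen ordered alphabets. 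Connectedness: $H_\p$ is filtered (graded when $q=0$) by the number of vertices, with degree-zero part $\K$, hence it is a Hopf algebra; the antipode exists by the usual recursion on connected filtered bialgebras.

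For part (2), the map $P\colon H_\ncsg\to H_\p$, $G\mapsto(V(G),\leq_G)$, is well defined because a simple graph with no cycle has an antisymmetric reachability relation, i.e. $\leq_G$ is a genuine partial order. That $P$ is an algebra morphism and intertwines the coproducts follows by combining the already-established surjection $H_\sg\twoheadrightarrow H_\qp$ (sending $G$ to its quasi-poset) with the no-cycle restrictions on both sides: on the source $\cdot_q$ only keeps $G\sqcup_\sigma H$ with no cycle, and on the target only keeps those $\sigma$ with $P\sqcup_\sigma Q$ a poset, and these two conditions correspond under $P$ since a cycle in the simple graph is exactly a non-trivial $\leq$-loop in the quasi-poset. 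Concretely I would phrase it as $P\circ(\text{inclusion into }H_\sg)=(\text{quasi-poset map})\circ\iota$ combined with the commuting square of $T$-maps in the diagram of the introduction, so that $P$ is the composite of morphisms already constructed; alternatively one checks directly on generators via $\Theta_X$ for ordered $X$, which is the cleanest. The non-commutative version is identical, replacing $A_q,A'_q,A''_q$ by $\bfA_q,\bfA'_q,\bfA''_q$ and "poset" by "ordered poset" throughout; the only point to note is that $P_{\mid A}$ inherits the induced total order, and $P\sqcup_\sigma Q$ carries the order described in the Remark after the definition of $\sqcup_\sigma$, so everything stays ordered.

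The main obstacle is the product formula (\ref{E17}): one must verify carefully that, after applying $\varpi''_X$, the passage $G\sqcup_\sigma H\mapsto P\sqcup_\sigma Q$ is correct — i.e. that gluing simple graphs and then taking the reachability quasi-order gives the transitive closure of the disjoint reachability relations (this is the content of the definition of $P\sqcup_\sigma Q$) — and, crucially, that over an \emph{ordered} alphabet the monomials realizing a glued graph whose reachability relation fails antisymmetry all vanish, exactly as in the $2\Rightarrow1$ direction of Lemma \ref{graphessanscycles}. Once that bookkeeping is in place, everything else is a transcription of the earlier proofs.
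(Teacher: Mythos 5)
Your proposal is correct and follows essentially the same route the paper intends: the paper leaves this noncommutative statement as a transcription of the earlier arguments, namely the polynomial realization $P\mapsto \bfM_P(X)$ in $\bfA''_q(X)$ over ordered alphabets, injectivity of $\Theta_X$ for a suitable alphabet, transfer of product and coproduct through $\varpi'_X$, $\varpi''_X$ as in Corollary \ref{corDelta} and Theorems \ref{theographes} and \ref{theoposets}, and the cycle-vanishing lemma to restrict to (ordered) posets. Your treatment of the morphism $P$ and of the ordered structures matches the paper's scheme, so no further comment is needed.
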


\begin{remark} Using Remark \ref{remarquecoproduit}, we could use totally ordered alphabets to define 
a second coproduct on $H_\p$. We would obtain the coproduct given for any poset $P$ of order $n$ by:
\[\delta(P)=P\otimes \tun^n,\]
which is coassocative, with a right counit but no left counit.
\end{remark}

\subsection{Dual product}

We now describe the dual product of $\Delta$ on $H_\p$. 
We identify $H_\p$ and its graded dual through the symmetric pairing defined for any pair $(P,Q)$ of posets by:
\begin{align*}
\langle P,Q\rangle&=s_P \delta_{P,Q},
\end{align*}
where $s_P$ is the number of automorphisms of $P$.

\begin{prop}
Let us define a coproduct $\blacktriangle$ on $\h_\p$ in the following way: If $P$ is a poset, 
denoting by $P_1,\ldots,P_k$ its connected components,
\[\blacktriangle(P)=\sum_{I\subseteq [k]} \prod_{i\in I} P_i\otimes \prod_{i\notin I} P_i.\]
Then $\blacktriangle$ is coassociative and counitary, and for any $x,y,z\in H_\p$:
\[\langle x,y._0 z\rangle=\langle\blacktriangle(x),y\otimes z\rangle.\]
\end{prop}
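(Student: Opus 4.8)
The plan is to prove the two assertions — coassociativity/counitality of $\blacktriangle$, and the duality $\langle x, y._0 z\rangle = \langle \blacktriangle(x), y\otimes z\rangle$ — separately, the second being the real content and the first being a standard fact about the "unshuffle of connected components" coproduct.

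For the coassociativity and counit, I would note that $\blacktriangle$ is exactly the coproduct dual to the free commutative product on the connected posets: if one writes a poset $P$ uniquely (up to order) as a product $P_1\cdots P_k$ of its connected components, then $\blacktriangle(P) = \sum_{I\subseteq[k]} \big(\prod_{i\in I}P_i\big)\otimes\big(\prod_{i\notin I}P_i\big)$ is the standard "splitting into two subsets" coproduct. Coassociativity follows because both $(\blacktriangle\otimes\mathrm{Id})\circ\blacktriangle(P)$ and $(\mathrm{Id}\otimes\blacktriangle)\circ\blacktriangle(P)$ equal $\sum$ over ordered partitions $[k] = I_1\sqcup I_2\sqcup I_3$ of $\prod_{i\in I_1}P_i\otimes\prod_{i\in I_2}P_i\otimes\prod_{i\in I_3}P_i$; the counit is $\varepsilon(P)=1$ if $P=\emptyset$ (the empty poset, unit of $._0$) and $0$ otherwise, and $I=\emptyset$ resp. $I=[k]$ give the two counit terms. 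This is a one-paragraph verification.

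For the duality identity, since everything is bilinear it suffices to check it on basis elements: take $x=P$, $y=Q$, $z=R$ posets, and compare $\langle P, Q._0 R\rangle$ with $\langle \blacktriangle(P), Q\otimes R\rangle$. By definition $Q._0 R = QR$ is the disjoint union, so $\langle P, QR\rangle = s_P\,\delta_{P,QR}$, which is nonzero (equal to $s_P$) exactly when $P$ is isomorphic to the disjoint union $QR$. On the other side, $\langle\blacktriangle(P), Q\otimes R\rangle = \sum_{I\subseteq[k]} s_P\,\delta_{\prod_{i\in I}P_i,\,Q}\,\delta_{\prod_{i\notin I}P_i,\,R}$ where $P_1,\dots,P_k$ are the connected components of $P$. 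So I must show: the number of subsets $I\subseteq[k]$ with $\prod_{i\in I}P_i\cong Q$ and $\prod_{i\notin I}P_i\cong R$ equals $1$ when $P\cong QR$ and $0$ otherwise — but weighted correctly so that the $s_P$ factors match. Here the key combinatorial point is the uniqueness of decomposition into connected components together with an automorphism-counting bookkeeping: if $Q$ has connected components with multiplicities and $R$ likewise, then choosing $I$ amounts to choosing which of $P$'s isomorphic connected components go to the $Q$-side; the number of such choices is a product of binomial coefficients $\binom{m_C}{a_C}$ over isomorphism types $C$ of connected posets (where $m_C$ is the multiplicity in $P$, $a_C$ in $Q$), and this equals $s_P/(s_Q s_R)$ precisely because $s_P = \prod_C s_C^{m_C}\,m_C!$ for a poset whose components of type $C$ occur $m_C$ times. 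Plugging in, $s_P \cdot \big(s_P/(s_Q s_R)\big)^{-1}\cdots$ — more cleanly: $\langle\blacktriangle(P),Q\otimes R\rangle = s_Q s_R \cdot \#\{I\} = s_Q s_R \cdot \frac{s_P}{s_Q s_R} = s_P$ when $P\cong QR$ (and the right components can be split off), matching $\langle P, QR\rangle = s_P$; both sides vanish otherwise.

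The main obstacle, and the step deserving the most care, is precisely this automorphism-counting: verifying that $s_P = s_Q\,s_R\cdot\#\{I\subseteq[k]: \prod_{i\in I}P_i\cong Q,\ \prod_{i\notin I}P_i\cong R\}$ whenever $P\cong QR$. This reduces to the formula $\mathrm{Aut}(P_1^{m_1}\cdots P_r^{m_r}) \cong \prod_i \big(\mathrm{Aut}(P_i)^{m_i}\rtimes \mathfrak{S}_{m_i}\big)$ for pairwise non-isomorphic connected $P_i$, hence $s_P = \prod_i s_{P_i}^{m_i} m_i!$, and then a routine check that the multinomial split of the $m_i$'s into the $Q$-part and $R$-part produces the stated product of binomials. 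I would state this automorphism lemma explicitly, prove the duality by the computation above, and reduce coassociativity to the ordered-triple-partition description; the "no left counit" type subtleties that plagued $\delta$ do not arise here since $\blacktriangle$ is genuinely a bialgebra coproduct dual to the unital commutative algebra $(H_\p,._0)$.
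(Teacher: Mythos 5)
Your proposal is correct and follows essentially the same route as the paper: both arguments reduce to the decomposition of $P$, $Q$, $R$ into connected components with multiplicities, the formula $s_P=\prod_i s_{P_i}^{\alpha_i}\alpha_i!$, and a binomial/multinomial count matching the subsets $I$ against $\delta_{\alpha,\beta+\gamma}$, with the same counit $\varepsilon(P)=\delta_{P,1}$. (The intermediate line writing $\sum_I s_P\,\delta_{\cdot,Q}\delta_{\cdot,R}$ should read $s_Qs_R$ in place of $s_P$, but you correct this yourself in the subsequent computation, so it is only a slip, not a gap.)
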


\begin{proof} The coassociativity of $\blacktriangle$ is immediate. Its counit is the map defined by $\varepsilon(P)=\delta_{P,1}$ for any poset $P$.
Let $P$, $Q$, $R$ be three posets. Let us denote by $P_1,\ldots,P_k$ the different isoclasses of connected components of $P$, $Q$ and $R$.
There exist $\alpha=(\alpha_i)_{i\in [k]}$, $\beta=(\beta_i)_{i\in [k]}$, $\gamma=(\gamma_i)_{i\in [k]}$, such that:
\begin{align*}
P&=\prod_{i\in [k]} P_i^{\alpha_i},&Q&=\prod_{i\in [k]} P_i^{\beta_i},&R&=\prod_{i\in [k]} P_i^{\gamma_i}.
\end{align*} 
Moreover:
\[s_P=\prod_{i\in [k]} s_{P_i}^{\alpha_i} \alpha_i!.\]
Hence:
\[\langle P,QR\rangle=\delta_{\alpha,\beta+\gamma} \prod_{i\in [k]} s_{P_i}^{\alpha_i} \alpha_i!.\]
Moreover:
\begin{align*}
\blacktriangle(P)&=\sum_{\alpha'+\alpha''=\alpha} \prod_{i\in [k]} \frac{\alpha_i!}{\alpha'_i!\alpha''_i!} \prod_{i\in [k]} P_i^{\alpha'_i}\otimes \prod_{i\in [k]} P_i^{\alpha''_i},\\
\langle \blacktriangle(P),Q\otimes R\rangle&=
\sum_{\alpha'+\alpha''=\alpha} \prod_{i\in [k]} \frac{\alpha_i!}{\alpha'_i!\alpha''_i!}\alpha'_i! \alpha''_i! s_{P_i}^{\alpha'_i+\alpha''_i}\delta_{\alpha',\beta}\delta_{\alpha'',\gamma}\\
&=\prod_{i\in [k]} \alpha_i! s_{P_i}^{\alpha_i} \delta_{\alpha,\beta+\gamma}.
\end{align*}
Finally, $\langle \blacktriangle(P),Q\otimes R\rangle=\langle P,QR\rangle$. \end{proof}

\begin{defi}
Let $P=(V(P),\leq_P)$ and $Q=(V(Q),\leq_Q)$. We denote by $\mathcal{P}(V(Q))$ the set of subsets of $V(Q)$.
A \emph{system of edges} from $P$ to $Q$ is a map $\Theta:V(P)\longrightarrow \mathcal{P}(V(Q))$
such that:
\begin{enumerate}
\item For any $x,y\in V(P)$, such that $x<_P y$, then for any $x'\in \Theta(x)$, $y'\in \Theta(y)$, we do not have $y'\leq_Q x'$.
\item For any $x\in V(P)$, for any $x',x''\in \Theta(x)$, then $x'\leq_Q x''$ if, and only if, $x'=x''$.
\end{enumerate}\end{defi}

\begin{prop}
Let $P$, $Q$ be two posets and $\Theta$ be a system of edges from $P$ to $Q$. We define an order on $V(P)\sqcup V(Q)$ in the following way:
\begin{itemize}
\item For any $x,y\in V(P)$, $x\leq y$ if $x\leq_P y$.
\item For any $x,y\in V(Q)$, $x\leq y$ if $x\leq_Q y$.
\item For any $x\in V(P)$, $y\in V(Q)$, $x\leq y$ if there exists $x'\in V(P)$, $y'\in \Theta(x')$, such that $x\leq_P x'$ and $y'\leq_Q y$. 
\end{itemize}
This poset is denoted by $P\sqcup_\Theta Q$. Moreover, $V(Q)$ is an ideal of $P\sqcup_\Theta Q$ and the edges of the Hasse graph of $P\sqcup_\Theta Q$ are:
\begin{itemize}
\item The edges of the Hasse graph of $P$,
\item The edges of the Hasse graph of $Q$,
\item The edges $(x,x')$, with $x\in V(P)$ and $x'\in \Theta(x)$. 
\end{itemize}
\end{prop}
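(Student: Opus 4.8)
The plan is to prove the three assertions of the statement in turn: that the displayed relation $\leq$ is a partial order (so that $P\sqcup_\Theta Q$ is genuinely a poset on the finite set $V(P)\sqcup V(Q)$), that $V(Q)$ is an ideal, and that the Hasse graph is the claimed one. The structural fact underpinning everything is that the three defining clauses have pairwise disjoint domains and that no clause ever relates an element of $V(Q)$ below an element of $V(P)$: once one is in $V(Q)$, one stays there.

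First I would check that $\leq$ is a partial order. Reflexivity is immediate from the first two clauses. For antisymmetry, $x\leq y$ and $y\leq x$ force $x$ and $y$ into the same block ($V(P)$ or $V(Q)$): if they lay in different blocks, one of the two relations could not be produced by any clause; inside a block one uses antisymmetry of $\leq_P$ or of $\leq_Q$. Transitivity is a finite case analysis on the locations of $x,y,z$; the cases that would require an arrow from $V(Q)$ to $V(P)$ are vacuous, and the two genuine cases, $x,y\in V(P)$ with $z\in V(Q)$ and $x\in V(P)$ with $y,z\in V(Q)$, reduce, after expanding the third clause, to transitivity of $\leq_P$ or of $\leq_Q$ respectively. (No part of this step uses the two axioms of a system of edges.) The ideal property of $V(Q)$ then follows at once: if $i\in V(Q)$ and $i\leq j$, then $j\in V(Q)$, since no clause points out of $V(Q)$ into $V(P)$.

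The substantial step is identifying the covering relations. Among elements of $V(P)$: an element of $V(Q)$ is never $\leq b$ when $b\in V(P)$, so nothing from $V(Q)$ can lie strictly between two elements of $V(P)$, whence the coverings inside $V(P)$ are exactly those of the Hasse graph of $P$; symmetrically for $Q$. For the mixed edges, take $x\in V(P)$ and $x'\in\Theta(x)$: since $x\leq_P x$ and $x'\in\Theta(x)$, the third clause gives $x\leq x'$, and $x\neq x'$ as they lie in different blocks, so $x<x'$. To show this is a covering, suppose $x<c<x'$ and split on $c\in V(P)$ or $c\in V(Q)$. If $c\in V(P)$, expanding $c<x'$ yields $u\in V(P)$ and $v\in\Theta(u)$ with $c\leq_P u$ and $v\leq_Q x'$; from $x<_P c\leq_P u$ one gets $x<_P u$, and axiom~1 then forbids $v\leq_Q x'$, a contradiction. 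If $c\in V(Q)$, expanding $x<c$ yields $u\in V(P)$ and $v\in\Theta(u)$ with $x\leq_P u$ and $v\leq_Q c<_Q x'$, hence $v<_Q x'$; if $u=x$ then $v,x'\in\Theta(x)$ contradicts axiom~2, and if $x<_P u$ one again contradicts axiom~1. Conversely, a covering $a<b$ with $a\in V(P)$ and $b\in V(Q)$, expanded via the third clause to $u\in V(P)$ and $v\in\Theta(u)$ with $a\leq_P u$ and $v\leq_Q b$, must satisfy $a=u$ and $v=b$, for otherwise $u$ or $v$ would sit strictly between $a$ and $b$; hence $b\in\Theta(a)$. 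This exhausts the three announced families.

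The main obstacle is precisely this covering analysis for the mixed edges: one must carefully handle the subcases $c\in V(P)$ versus $c\in V(Q)$, and within the latter $u=x$ versus $x<_P u$, since these are the only places where axioms~1 and~2 of a system of edges are actually invoked; everything else is bookkeeping with the three clauses and transitivity of $\leq_P$ and $\leq_Q$.
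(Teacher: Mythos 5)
Your proof is correct and follows essentially the same route as the paper: a case analysis on the blocks $V(P)$ and $V(Q)$ for reflexivity, antisymmetry and transitivity, the observation that no element of $V(Q)$ lies below an element of $V(P)$ (giving the ideal property), and then a two-way identification of the covering relations in which the two axioms of a system of edges are invoked only for the mixed edges. If anything, your verification that each pair $(x,x')$ with $x'\in\Theta(x)$ really is a covering is spelled out more explicitly (with the subcases $c\in V(P)$, $c\in V(Q)$ and $u=x$ versus $x<_P u$) than the paper's rather terse treatment of that same point.
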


\begin{proof} Note that if $x\leq y$, we cannot have $x\in V(Q)$ and $y\in V(P)$.

$\leq$ is obviously reflexive. If $x\leq y$ and $y \leq x$, then both $x$ and $y$ belong to $V(P)$, or both belong to $V(Q)$.
Hence, $x\leq_P y$ and $y\leq_P x$, or  $x\leq_Q y$ and $y\leq_Q x$. In both cases, $x=y$, so $\leq$ is antisymmetric.
Let us assume that $x\leq y$ and $y\leq z$. Four cases hold.
\begin{enumerate}
\item $(x,y,z)\in V(P)^3$. Then $x\leq_P y\leq_P z$, so $x\leq z$.
\item $(x,y,z)\in V(P)^2\times V(Q)$. There exist $y'\in V(P)$, $z'\in \Theta(x')$, such that $x\leq_P y\leq_P y'$ and $z'\leq_Q z$. So $x \leq z$.
\item $(x,y,z)\in V(P)\times V(Q)^2$. There exist $x'\in V(P)$, $y'\in \Theta(x')$, such that $x\leq_P x'$ and $y'\leq_Q y\leq_Q z$. So $x\leq z$.
\item $(x,y,z)\in V(Q)^3$. Then $x\leq_Q y\leq_Q z$, so $x\leq z$.
\end{enumerate}
Hence, $\leq$ is an order. \\

Let $(x,y)$ be an edge of the Hasse graph of $P\sqcup_\Theta Q$, that is to say:
\begin{itemize}
\item $x<y$.
\item If $x\leq z\leq y$, then $z\in \{x,y\}$.
\end{itemize}
Three cases are possible.
\begin{enumerate}
\item $(x,y)\in V(P)^2$. As $(P\sqcup Q)_{\mid V(P)}=P$, $(x,y)$ is an edge of the Hasse graph of $P$.
\item $(x,y)\in V(Q)^2$. As $(P\sqcup Q)_{\mid V(Q)}=Q$, $(x,y)$ is an edge of the Hasse graph of $Q$.
\item $(x,y)\in V(P)\times V(Q)$. Then there exists $x'\in V(P)$, $y'\in \Theta(x')$, such that $x\leq_P x'$ and $y'\leq_Q y$.
By definition, $x'<y',$ so $x\leq x'<y'\leq y$. As $(x,y)$ is an edge, $x=x'$ and $y=y'$, so $y\in \Theta(x)$.
\end{enumerate}
Conversely:
\begin{itemize}
\item Let $(x,y)$ be an edge of the Hasse graph of $P$. If $x\leq z\leq y$, necessarily $z\in V(P)$ as $y\in V(P)$, so $x\leq_P z\leq_P y$,
which implies $z\in \{x,y\}$.
\item Similarly, if $(x,y)$ is an edge of the Hasse graph of $Q$, then it is an edge of the Hasse graph of $P\sqcup_\Theta Q$.
\item If $x\in V(P)$ and $y\in \Theta(x)$, then $x< y$. If $x\leq z\leq y$, two cases are possible.
\begin{enumerate}
\item If $z\in V(P)$, there exists $z'\in V(P)$, $y'\in \Theta(z')$, such that $x\leq_P z\leq_P z'$, $y'\leq_Q y$. So $x\leq z\leq z'<y'\leq y$.
As $(x,y)$ is an edge, $x=z=z'$ and $y'=y$.
\item Similarly, if $z\in V(Q)$, $z=y$.
\end{enumerate}
\end{itemize}
Obviously, $V(Q)$ is an ideal of $P\sqcup_\Theta Q$. \end{proof}

\begin{lemma}
Let $P_1,P_2,P$ be three posets. We consider the two following sets:
\begin{itemize}
\item $C(P_1,P_2,P)$ is the set of triple $(I,\phi_1,\phi_2)$, where $I$ is an ideal of $P$, $\phi_1$ is an isomorphism from $P_1$ to $P_{\mid V(P)\setminus I}$
and $\phi_2$ is an isomorphism from $P_2$ to $P_{\mid I}$.
\item $D(P_1,P_2,P)$ is the set of pairs $(\Theta,\phi)$, where $\Theta$ is a system of edges from $P_1$ to $P_2$ and $\phi$
is an isomorphism from $P_1\sqcup_\Theta P_2$ to $P$.
\end{itemize}
Then $C(P_1,P_2,P)$ and $D(P_1,P_2,P)$ are in bijection.
\end{lemma}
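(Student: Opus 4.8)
The plan is to construct the bijection explicitly and check it is well-defined in both directions. Given a triple $(I,\phi_1,\phi_2)\in C(P_1,P_2,P)$, I would produce a pair $(\Theta,\phi)\in D(P_1,P_2,P)$ as follows. Since $I$ is an ideal of $P$, the set $V(P)\setminus I$ is a down-closed set, and $\phi_1,\phi_2$ give identifications $V(P_1)\cong V(P)\setminus I$ and $V(P_2)\cong I$. Transport the order relations between these two pieces through $\phi_1,\phi_2$: define $\Theta:V(P_1)\longrightarrow \mathcal{P}(V(P_2))$ by letting $\Theta(x)$ be the set of minimal elements (with respect to $\leq_{P_2}$) of $\{y\in V(P_2)\mid \phi_1(x)\leq_P \phi_2(y)\}$. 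One must check the two axioms of a system of edges: axiom 2 is immediate from taking minimal elements, and axiom 1 follows because if $x<_{P_1}y$ and $y'\in\Theta(y)$, $x'\in\Theta(x)$, then $\phi_2(y')$ lies above $\phi_1(y)$ hence above $\phi_1(x)$, so $\phi_2(y')$ belongs to the set whose minimal elements form $\Theta(x)$; were $\phi_2(y')\leq_P\phi_2(x')$ with $x'$ minimal in that set, minimality forces $\phi_2(y')=\phi_2(x')$, but then $y'\in\Theta(x)$ as well, and one derives $x'\in\Theta(y)$ or a contradiction with transitivity — here I would lean on the fact that $I$ being an ideal means nothing above $\phi_2(y')$ leaves $I$. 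Then $\phi:P_1\sqcup_\Theta P_2\to P$ is the map agreeing with $\phi_1$ on $V(P_1)$ and $\phi_2$ on $V(P_2)$; by the previous proposition the Hasse graph of $P_1\sqcup_\Theta P_2$ consists of the Hasse edges of $P_1$, those of $P_2$, and the edges $(x,x')$ with $x'\in\Theta(x)$, and I would verify these all map to Hasse edges of $P$ and generate exactly $\leq_P$, so $\phi$ is an isomorphism.

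Conversely, given $(\Theta,\phi)\in D(P_1,P_2,P)$, set $I=\phi(V(P_2))$, which is an ideal of $P$ because $V(P_2)$ is an ideal of $P_1\sqcup_\Theta P_2$ (established in the proposition) and $\phi$ is an isomorphism; and take $\phi_1,\phi_2$ to be the corestrictions of $\phi$ to $V(P_1)$ and $V(P_2)$, which are isomorphisms onto $P_{\mid V(P)\setminus I}$ and $P_{\mid I}$ respectively since $(P_1\sqcup_\Theta P_2)_{\mid V(P_1)}=P_1$ and $(P_1\sqcup_\Theta P_2)_{\mid V(P_2)}=P_2$. This gives a map $D(P_1,P_2,P)\to C(P_1,P_2,P)$.

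Finally I would check the two composites are identities. Starting from $(I,\phi_1,\phi_2)$, passing to $(\Theta,\phi)$ and back clearly recovers $I=\phi(V(P_2))=\phi_2(V(P_2))$ and the same $\phi_1,\phi_2$. Starting from $(\Theta,\phi)$, the returned system of edges $\Theta'$ is defined via $\phi_1,\phi_2$ by the minimal-element recipe; one must show $\Theta'=\Theta$. The inclusion that $\Theta(x)\subseteq\{y\mid \phi_1(x)\leq_P\phi_2(y)\}$ is clear, and axiom 2 for $\Theta$ says the elements of $\Theta(x)$ are pairwise incomparable, so they are minimal in that set provided no element of $\Theta(x)$ sits strictly above another element of it — which is exactly axiom 2 — and provided nothing in $\Theta(x)$ dominates a distinct element also forced into the set; conversely any minimal element $y$ of that set is reached from $x$ via the definition of $\leq_{P_1\sqcup_\Theta P_2}$ through some $x'\geq_{P_1}x$ and $y'\in\Theta(x')$ with $y'\leq_{P_2}y$, and minimality plus axiom 1 should force $x'=x$ and $y=y'\in\Theta(x)$.

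The main obstacle is this last verification that $\Theta'=\Theta$, i.e. that the ``minimal elements'' recipe exactly inverts the edge-relation $\leq_{P_1\sqcup_\Theta P_2}$ restricted to cross-pairs; it requires using both axioms in the definition of a system of edges together with the ideal property, and care is needed because a priori $\Theta(x)$ could fail to be the full antichain of minimal elements of its up-set. Everything else is bookkeeping, relying on the structural description of $P_1\sqcup_\Theta P_2$ (its induced subposets and Hasse edges) supplied by the preceding proposition.
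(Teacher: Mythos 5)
There is a genuine gap, and it lies exactly where you flagged trouble: your forward map is built on the wrong recipe for $\Theta$. You define $\Theta(x)$ as the set of minimal elements of $\{y\in V(P_2)\mid \phi_1(x)\leq_P\phi_2(y)\}$, but this set can fail axiom 1 of a system of edges, and your verification of axiom 1 never actually reaches a contradiction because there is none. Concretely, let $P$ be the chain $a<_P b<_P c$, let $I=\{c\}$ (an ideal), so $P_1$ is the chain $a<b$ and $P_2$ is the single point $c$. Your recipe gives $\Theta(a)=\Theta(b)=\{c\}$: with $a<_{P_1}b$, $x'=c\in\Theta(a)$, $y'=c\in\Theta(b)$ we have $y'\leq_{P_2}x'$, violating axiom 1. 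The underlying reason is that minimality is only tested against elements of $I$, while a Hasse-type condition must also exclude intermediate elements of $V(P)\setminus I$ (here $b$ sits strictly between $a$ and $c$). The same example shows that the identity $\Theta'=\Theta$ you were worried about is genuinely false for this recipe: for the valid system of edges $\Theta(a)=\emptyset$, $\Theta(b)=\{c\}$ (which glues to the chain), the ``minimal elements of the up-set'' computation returns $c\in\Theta'(a)$, so the composite $D\to C\to D$ is not the identity.

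The fix is to define $\Theta(x)$ as the set of $y\in V(P_2)$ such that $(\phi_1(x),\phi_2(y))$ is an edge of the Hasse graph of $P$ (not merely $\phi_1(x)\leq_P\phi_2(y)$ with $\phi_2(y)$ minimal in $I$). With that definition axiom 1 follows cleanly: if $x<_{P_1}y$, $x'\in\Theta(x)$, $y'\in\Theta(y)$ and $y'\leq_{P_2}x'$, then $\phi_1(x)<_P\phi_1(y)<_P\phi_2(y')\leq_P\phi_2(x')$, contradicting that $(\phi_1(x),\phi_2(x'))$ is a Hasse edge; axiom 2 is similar. Moreover the recovery step becomes immediate, because the preceding proposition says the Hasse edges of $P_1\sqcup_\Theta P_2$ crossing from $V(P_1)$ to $V(P_2)$ are exactly the pairs $(x,x')$ with $x'\in\Theta(x)$, so transporting Hasse edges through $\phi$ recovers $\Theta$ on the nose and both composites are identities. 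Your backward map $(\Theta,\phi)\mapsto(\phi(V(P_2)),\phi_{\mid V(P_1)},\phi_{\mid V(P_2)})$ and the overall bookkeeping are fine and agree with the intended argument; only the construction of $\Theta$ needs to be replaced.
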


\begin{proof} We shortly denote $C=C(P_1,P_2,P)$ and $D=D(P_1,P_2,P)$.\\

\textit{First step.} Let $F:C\longrightarrow D$ defined by $F(I,\phi_1,\phi_2)=(\Theta,\phi)$, with:
\begin{itemize}
\item $\phi$ is defined by $\phi_{\mid V(P_1)}=\phi_1$ and $\phi_{\mid V(P_2)}=\phi_2$.
\item For any $x\in V(P_1)$, $\Theta(x)$ is the set of $y\in V(P_2)$ such that $(\phi_1(x),\phi_2(y))$ is an edge of the Hasse graph of $P$. 
\end{itemize}
Let us prove that $F$ is well-defined.

Let $x,y\in V(P_1)$, $x'\in \Theta(x)$, $y'\in \Theta(y)$, such that $x<_{P_1} y$ and $y'\leq_{P_2}x'$. Then $\phi_1(x)<_P \phi_1(y)$,
$\phi_2(y')\leq_P \phi_2(x')$, $(\phi_1(x),\phi_2(x'))$ and $(\phi_1(y),\phi_1(y'))$ are edges of the Hasse graph of $P$. We obtain:
\[\phi_1(x)<_P \phi_1(y)<_P\phi_2(y')\leq_P \phi_2(x').\]
This contradicts that $(\phi_1(x),\phi_2(x'))$ is an edge of the Hasse graph of $P$. 

Let $x\in V(P_1)$, $x',x''\in \Theta(x)$, with $x'\leq_{P_2} x''$. Then $\phi_1(x)<_P \phi_2(x')\leq \phi_2(x'')$.
As $(\phi_1(x),\phi_2(x'')$ is an edge, $\phi_2(x')=\phi_2(x'')$, so $x'=x''$. We proved that $\Theta$ is a system of edges from $P_1$ to $P_2$.\\

By the preceding lemma, the image by $\phi$ of the edges of $P_1\sqcup_\Theta P_2$ are the edges of $P$, so $\phi$ is an isomorphism.
We proved that $F$ is well-defined.\\

\textit{Second step.} Let $G:D\longrightarrow C$ defined by $G(\Theta,\phi)=(I,\phi_1,\phi_2)$, where:
\begin{itemize}
\item $I=\phi(V(P_2))$.
\item $\phi_1=\phi_{\mid V(P_1)}$ and $\phi_2=\phi_{\mid V(P_2)}$.
\end{itemize}
By the preceding lemma, $G$ is well-defined. \\

\textit{Last step.} Let $(I,\phi_1,\phi_2)\in C$. We put $F(I,\phi_1,\phi_2)=(\Theta,\phi)$ and $G(\Theta,\phi)=(I',\phi_1',\phi_2')$.
Then $I'=\phi(V(P_2))=I$, $\phi'_1=\phi_{\mid V(P_1)}=\phi_1$ and similarly, $\phi'_2=\phi_2$. So $G\circ F=Id_C$.

Let $(\Theta,\phi)\in D$. We put $G(\Theta,\phi)=(I,\phi_1,\phi_2)$ and $F(I,\phi_1,\phi_2)=(\Theta',\phi')$. For any $x\in V(P_1)$, $y\in V(P_2)$:
\begin{align*}
y\in \Theta'(x)&\Longleftrightarrow (\phi_1(x),\phi_2(y))\mbox{ edge of the Hasse graph of $P$}\\
&\Longleftrightarrow (\phi(x),\phi(y))\mbox{ edge of the Hasse graph of $P$}\\
&\Longleftarrow (x,y)\mbox{ edge of the Hasse graph of $P\sqcup_\Theta Q$}\\
&\Longleftrightarrow y\in \Theta(x).
\end{align*}
Therefore, $\Theta'=\Theta$. Moreover, for $i\in \{1,2\}$, $\phi'_{\mid V(P_i)}=\phi_i=\phi_{\mid V(P_i)}$, so $\phi'=\phi$. Hence, $F\circ G=Id_D$.
\end{proof}

\begin{theo}
We define a product $\star$ on $H_\p$ in the following way: for any posets $P,Q$,
\[P\star Q=\sum_{\substack{\mbox{\scriptsize $\Theta$ system of edges}\\ \mbox{\scriptsize from $P$ to $Q$}}} P \sqcup_\Theta Q.\]
Then $(\h_\p,\star,\blacktriangle)$ is a Hopf algebra. Moreover, for any $x,y,z\in H_\p$:
\[\langle x\star y,z\rangle=\langle x\otimes y,\Delta(z)\rangle.\]
\end{theo}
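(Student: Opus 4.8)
The plan is to identify $(H_\p,\star,\blacktriangle)$ with the graded dual Hopf algebra of $(H_\p,._0,\Delta)$ through the pairing $\langle-,-\rangle$. Recall first that $(H_\p,._0,\Delta)$ is a graded connected Hopf algebra, the grading being by the number of vertices (this is the case $q=0$ of Theorem~\ref{theoposets}, with the grading as in Corollary~\ref{corDelta}); each homogeneous component is finite-dimensional, there being only finitely many posets on a given finite vertex set. Since the pairing $\langle P,Q\rangle=s_P\delta_{P,Q}$ is diagonal with nonzero entries in the basis of posets, it is graded and nondegenerate on each component, hence induces a graded linear isomorphism $\iota:H_\p\longrightarrow H_\p^*$ onto the graded dual.

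I would first prove the identity $\langle x\star y,z\rangle=\langle x\otimes y,\Delta(z)\rangle$. By bilinearity it suffices to take three posets $x=P_1$, $y=P_2$, $z=P$. Expanding $\Delta(P)=\sum_{I\in I(P)}P_{\mid V(P)\setminus I}\otimes P_{\mid I}$ and using $\langle P',Q'\rangle=s_{P'}\delta_{P',Q'}$ shows that $\langle P_1\otimes P_2,\Delta(P)\rangle$ adds up, over the ideals $I$ with $P_{\mid V(P)\setminus I}\cong P_1$ and $P_{\mid I}\cong P_2$, the product $s_{P_1}s_{P_2}$ of the numbers of isomorphisms $P_1\to P_{\mid V(P)\setminus I}$ and $P_2\to P_{\mid I}$; in other words $\langle P_1\otimes P_2,\Delta(P)\rangle=|C(P_1,P_2,P)|$. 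Dually, expanding $P_1\star P_2=\sum_\Theta P_1\sqcup_\Theta P_2$ (a finite sum, there being finitely many systems of edges) gives $\langle P_1\star P_2,P\rangle=s_P\cdot\#\{\Theta\mid P_1\sqcup_\Theta P_2\cong P\}$, and since there are exactly $s_P$ isomorphisms $P_1\sqcup_\Theta P_2\to P$ for each such $\Theta$, this equals $|D(P_1,P_2,P)|$. The preceding Lemma gives $|C(P_1,P_2,P)|=|D(P_1,P_2,P)|$, whence the identity.

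Combining this with the identity $\langle\blacktriangle(x),y\otimes z\rangle=\langle x,y._0 z\rangle$ of the previous proposition, a routine unravelling of the definitions yields $\iota\circ\star=\Delta^*\circ(\iota\otimes\iota)$ and $(._0)^*\circ\iota=(\iota\otimes\iota)\circ\blacktriangle$, where $\Delta^*$ and $(._0)^*$ are the transpose maps on $H_\p^*$, well-defined because the homogeneous components are finite-dimensional. Thus $\iota$ is simultaneously an algebra isomorphism from $(H_\p,\star)$ to $(H_\p^*,\Delta^*)$ and a coalgebra isomorphism from $(H_\p,\blacktriangle)$ to $(H_\p^*,(._0)^*)$, hence a bialgebra isomorphism onto the graded dual bialgebra of $(H_\p,._0,\Delta)$. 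Since the graded dual of a graded connected Hopf algebra is again a graded connected Hopf algebra, $(H_\p,\star,\blacktriangle)$ is a Hopf algebra: its unit is $\iota^{-1}$ of the counit of $H_\p$, namely the empty poset $1$ (as $\langle 1,z\rangle=\delta_{1,z}$), its counit is $P\mapsto\delta_{P,1}$, and its antipode is transported from $H_\p^*$. The displayed pairing formula is exactly the identity proved in the second step.

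The one point I expect to require genuine care is the bookkeeping of the automorphism factors in the two cardinality counts: that the factor $s_{P_1}s_{P_2}$ produced by the diagonal pairing against $\Delta(P)$ is precisely the number of pairs $(\phi_1,\phi_2)$ entering the definition of $C(P_1,P_2,P)$, and that the single factor $s_P$ produced by pairing $P_1\star P_2$ against $P$ is precisely the number of isomorphisms $\phi$ entering the definition of $D(P_1,P_2,P)$, so that the bijection $C(P_1,P_2,P)\leftrightarrow D(P_1,P_2,P)$ of the Lemma can be invoked verbatim. Everything else is formal once $(H_\p,._0,\Delta)$ is known to be graded connected with finite-dimensional homogeneous components.
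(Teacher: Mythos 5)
Your proposal is correct and follows essentially the same route as the paper: establish $\langle P_1\star P_2,P\rangle=\sharp D(P_1,P_2,P)=\sharp C(P_1,P_2,P)=\langle P_1\otimes P_2,\Delta(P)\rangle$ via the preceding lemma's bijection, combine with the previous proposition identifying $\blacktriangle$ as dual to $._0$, and conclude by nondegeneracy that $(\h_\p,\star,\blacktriangle)$ is the (graded) dual Hopf algebra of $(H_\p,._0,\Delta)$. You merely spell out the automorphism-factor bookkeeping and the graded-connected, finite-dimensional-components justification that the paper leaves implicit.
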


\begin{proof} Let $P_1,P_2,P$ be posets. By the preceding lemma:
\begin{align*}
\langle P_1\otimes P_2,\Delta(P)\rangle&=\sum_{\mbox{\scriptsize $I$ ideal of $P$}} \langle P_1,P_{\mid V(P)\setminus I}\rangle\langle P_2,P_{\mid I}\rangle\\
&=\sharp C(P_1,P_2,P)\\
&=\sharp D(P_1,P_2,P)\\
&=\sum_{\substack{\mbox{\scriptsize $\Theta$ system of edges}\\ \mbox{\scriptsize from $P_1$ to $P_2$}}} \langle P_1\sqcup_\Theta P_2,P\rangle\\
&=\langle P_1\star P_2,P\rangle.
\end{align*}
As $\langle-,-\rangle$ is non degenerate and $(H_\p,m,\Delta)$ is a Hopf algebra, dually $(\h_\p,\star,\blacktriangle)$ is a Hopf algebra.

Let $x,y,z\in H_\p$. For any $t\in H_\p$:
\begin{align*}
\langle (x\star y)*z,t\rangle&=\langle x\otimes y\otimes z,(\Delta \otimes Id)\circ \delta(t)\rangle\\
&=\langle x\otimes y\otimes z,m_{1,3,24}\circ (\delta \otimes \delta)\circ \Delta(t)\rangle\\
&=\langle x\otimes z'\otimes y\otimes z'',(\delta\otimes \delta)\circ \Delta(t)\rangle\\
&=\langle (x*z')\star (x*z''),t\rangle.
\end{align*}
We conclude by the non degeneracy of $\langle-,-\rangle$. \end{proof}

\bibliographystyle{amsplain}
\bibliography{biblio}

\end{document}